\newtheorem{theorem}{Theorem}[section]
\newtheorem{lemma}[theorem]{Lemma}
\newtheorem*{lemma*}{Lemma}
\newtheorem{proposition}[theorem]{Proposition}
\newtheorem{corollary}[theorem]{Corollary}
\newtheorem{thm}{Theorem}[section]
\numberwithin{equation}{section}
\theoremstyle{definition}
\newtheorem{definition}[theorem]{Definition}
\newtheorem{example}[theorem]{Example}
\theoremstyle{remark}
\newtheorem{remark}[theorem]{Remark}
\numberwithin{equation}{section}
\newcommand{\abs}[1]{\lvert#1\rvert}
\newcommand{\C}{\mathbb{C}}
\newcommand{\D}{\partial}
\newcommand{\W}{\mathscr{W}}
\newcommand{\R}{\mathbb{R}}
\newcommand{\X}{\mathbb{X}}
\newcommand{\U}{\mathbb{U}}
\newcommand{\BB}{\mathbb{B}}
\newcommand{\Y}{\mathbb{Y}}
\newcommand{\dtext}{\textnormal d}
\newcommand{\onto}{\xrightarrow[]{{}_{\!\!\textnormal{onto\,\,}\!\!}}}
\newcommand{\bydef}{\stackrel {\textnormal{def}}{=\!\!=} }
\DeclareMathOperator{\diam}{diam}
\DeclareMathOperator{\loc}{loc}
\DeclareMathOperator{\osc}{osc}
\def\le{\leqslant}
\def\ge{\geqslant}
\def\rr{{\mathbb R}}
\def\rn{{{\rr}^n}}
\def\cl{{\mathscr L}}
\def\fz{\infty}
\def\loc{{\mathop\mathrm{\,loc\,}}}
\def\bint{{\ifinner\rlap{\bf\kern.25em--}
\int\else\rlap{\bf\kern.45em--}\int\fi}\ignorespaces}
\def\bbint{{\ifinner\rlap{\bf\kern.25em--}
\hspace{0.078cm}\int\else\rlap{\bf\kern.45em--}\int\fi}\ignorespaces}
\def\diam{{\mathop\mathrm{\,diam\,}}}
\def\r{\right}
\def\lf{\left}
\begin{document}

\title[Deformations of Bi-conformal Energy and Cusps]{Creating and Flattening \\ Cusp Singularities by \\Deformations of Bi-conformal Energy}

\author[T. Iwaniec]{ Tadeusz Iwaniec}
\address{Department of Mathematics, Syracuse University, Syracuse,
NY 13244, USA }
\email{tiwaniec@syr.edu}

\author[J. Onninen]{Jani Onninen}
\address{Department of Mathematics, Syracuse University, Syracuse,
NY 13244, USA and Department of Mathematics and Statistics, P.O.Box 35 (MaD) FI-40014 University of Jyv\"askyl\"a, Finland}
\email{jkonnine@syr.edu}

\author[Z. Zhu]{Zheng Zhu}
\address{Department of Mathematics and Statistics, P.O.Box 35 (MaD) FI-40014 University of Jyv\"askyl\"a, Finland}
\email{zheng.z.zhu@jyu.fi}

\thanks{T. Iwaniec was supported by the NSF grant DMS-1802107.
J. Onninen was supported by the NSF grant DMS-1700274.}

%    General info
\subjclass[2010]{Primary 30C65}

%\date{\today}

\keywords{Cusp, bi-conformal energy,  mappings of integrable distortion, quasiball}

\maketitle

\begin{abstract}
 Mappings of bi-conformal energy form the  widest class of homeomorphisms that one can hope to  build a viable extension of Geometric Function Theory  with connections to mathematical models of Nonlinear Elasticity. Such mappings  are  exactly  the ones with finite conformal energy and integrable inner distortion. It is  in this way, that our studies  extend the applications of  quasiconformal homeomorphisms to the degenerate elliptic systems of PDEs. The present paper searches a  bi-conformal variant of the Riemann Mapping Theorem,  focusing on domains with exemplary  singular boundaries that are not quasiballs. We establish the sharp description of boundary  singularities that can be  created and flattened by mappings of bi-conformal energy.
\end{abstract}

%\tableofcontents

\section{Introduction}
We are concerned with orientation preserving homeomorphisms $\,h : \mathbb X \onto \mathbb Y\,$ between bounded domains $\,\mathbb X , \mathbb Y \subset \mathbb R^n \,, n \geqslant 2\,$,  of Sobolev class $\,\mathscr W^{1,p} (\mathbb X, \mathbb Y)\,, 1 \leqslant p \leqslant \infty\,$.
\subsection{Quasiconformal Deformations} Of particular interest are homeomorphisms of finite $\,n\,$-harmonic energy; that is, with $\,p = n\,$.
\begin{equation}\label{conformalenergy}
\mathsf E_{\X}[h] \bydef  \int_\X \abs{Dh(x)}^n \, \dtext x  \,< \infty\, .
\end{equation}
Hereafter the symbol  $\,| Dh(x) |\,$ stands for the operator norm of the differential matrix $\,Dh(x) \in \mathbb R^{n\times n}\,$  called the \textit{deformation gradient}. This  integral is invariant under the conformal change of variables in the \textit{reference configuration} $\X$ (not in the \textit{deformed configuration} $\Y$). That is, $\mathsf E_{\X'}[h']=\mathsf E_{\X}[h] $, where $\,h'=h \circ \varphi $ for a conformal transformation $\varphi \colon \X' \onto \X$. This motivates our calling $\mathsf E_{\X} [h]\,$  \textbf{conformal energy} of $\,h\,$. Mappings of conformal energy arise naturally in Geometric Function Theory (GFT) for many reasons~\cite{AIMb, GeVa, HKb, IMb, Reb}.
\begin{definition}
A \textit{Sobolev homeomorphism}  $\,h \colon \X \onto \Y\,$; that is, of class $\,\mathscr W^{1,1}_{\textnormal{loc}}(\mathbb X, \mathbb Y)\,$,  is said to be   \textit{quasiconformal}  if there exists a constant $1 \le \mathcal K < \infty$ so that for almost every  $\,x \in \X\,$ it holds:
\[\abs{Dh(x)}^n \le \mathcal K\, J_h(x) \;,\qquad \textnormal{where} \,\;\;\;J_h(x) = \textnormal{det}\, Dh(x)   . \]
\end{definition}
 Recall that the Jacobian determinant $\,J_h(x)\,$ of any Sobolev homeomorphism  $\,h\in \W_{\loc}^{1,1} (\X, \R^n)\,$ is locally integrable. Actually,
if the deformed configuration $\Y=h(\X)$ has finite volume the Jacobian is globally integrable and
\begin{equation}
\int_\X J_h(x) \, \dtext x \le \abs{\Y}.\nonumber
\end{equation}
In particular, every quasiconformal map $\,h \colon \X \onto \Y\,$ has finite conformal energy:
\begin{equation}
\mathsf E_{\X}[h] =  \int_\X \abs{Dh(x)}^n \, \dtext x  \, \leqslant \mathcal K \int _\X J_h(x)  \dtext x\, = \;\mathcal K\,\abs{\Y}\,
\end{equation}
\subsection{Mappings of Bi-conformal Energy}

The remarkable feature of a quasiconformal mapping is that its inverse $\,f \bydef h^{-1} \,: \mathbb Y \onto \mathbb X\,$ is
also quasiconformal. In particular, both $\,h\,$ and $\,f\,$  have finite conformal energy. Their sum
\begin{equation}\label{eq:bi-conformal}
\mathsf E_{\X \Y }[h] \bydef  \int_\X \abs{Dh(x)}^n \, \dtext x + \int_\Y \abs{Df(y)}^n \, \dtext y \, \bydef \mathsf E_{\Y \X }[f] .
\end{equation}
will be called \textit{bi-conformal energy} of $\,h\,$.

This leads us to a viable extension of GFT with connections to mathematical models of Nonlinear Elasticity (NE)~\cite{Anb, Bac, Cib, MHb}.
\begin{definition} A   homeomorphism $h \colon \X \onto \Y$ in $\W^{1,n} (\X, \R^n)$, whose inverse $f=h^{-1} \colon \Y \onto \X$ also belongs to $\W^{1,n} (\Y, \R^n)$ is called a \textit{mapping of  bi-conformal energy}.
\end{definition}
 It is equivalent to saying that the inner distortion function of $\,h\,$  is integrable over $\,\mathbb X\,$ and the inner distortion function of $\,f\,$ is integrable over $\,\mathbb Y\,$. For a precise statement (Theorem \ref{thm1.2} below) we need some definitions:
 \subsection{Inner Distortion}
Consider a Sobolev mapping $\,h \in \mathscr W^{1,1}_{\textnormal{loc}} (\mathbb X, \mathbb R^n)\,$ and its \textit{co-differential} $\,D^{\sharp} h(x)  \in \mathbb R^{n\times n}\,$- the matrix determined by Cramer's rule $ D^\sharp h\circ
 Dh =  J_h(x)\,\bf I $.
 \begin{definition}
  The inner distortion of $\,h\,$ is the smallest measurable function $\,K_{_I} (x) =  K_{_I}(x,h)  \in [1, \infty]\,$ such that
   \begin{equation}\label{eq:innerdistortion}
 \abs{D^\sharp h(x)}^n \le K_{_I}(x) J_h(x)^{n-1} \;,\qquad \textnormal{for almost every } x \in \X \, .
 \end{equation}
\end{definition}

The question of finite inner distortion merely asks for the co-differential  $\,D^{\sharp} h(x) = 0\,$ at the points where the Jacobian $J_h(x)=0$. However, for $n \ge 3$, the differential $Dh(x)$ need not vanish if $D^\sharp h (x)=0$.  \\

A formal algebraic computation reveals that the pullback of the $n$-form $K_{_I}(x,h) \, \dtext x \in \wedge ^n \X$ via the inverse mapping $f \colon \Y \onto \X$ equals $\abs{Df(y)}^n \, \dtext y \in \wedge ^n\Y$.
 This observation  is the key to the  fundamental equality between the $\,\mathscr L^1\,$-norm of $K_{_I}(x,h)\,$ and conformal energy of the inverse map  $\,f\,$, which is usually   derived under various regularity assumptions ~\cite{AIMO,  CHM, HK, HKO, Onreg}. We shall state and prove it in the following form:
 \begin{theorem}\label{thm1.2}
 Let $h \colon \X \to \Y$ be an orientation-preserving  homeomorphism in the Sobolev space $\W^{1,n} (\X, \R^n)$, $n \ge 2$. Then the inner distortion of $h$ is integrable if and only if the inverse mapping $f=h^{-1} \colon \Y \to \X$ has finite conformal energy. Furthermore, we have
 \begin{equation}\label{eq:identity}\int_\Y \abs{Df(y)}^n \, \dtext y = \int_{\X} K_{_I}(x,h) \, \dtext x  \, .   \end{equation}
 \end{theorem}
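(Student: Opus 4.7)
The plan is to derive both implications and the identity from a single pointwise algebraic relation, combined with Lusin's $N$ property of Sobolev homeomorphisms. At any point $x\in\X$ where $h$ is differentiable with $J_h(x)>0$, the classical inverse-function theorem gives that $f$ is differentiable at $y=h(x)$ with $Df(h(x))=[Dh(x)]^{-1}=D^\sharp h(x)/J_h(x)$; taking the operator norm raised to the $n$-th power,
\[
|Df(h(x))|^n\, J_h(x) \;=\; \frac{|D^\sharp h(x)|^n}{J_h(x)^{n-1}} \;=\; K_{_I}(x,h).
\]
The key analytic input is that any homeomorphism $h\in\W^{1,n}(\X,\R^n)$ automatically satisfies Lusin's $N$ condition, so the area formula $\int_\Y \psi(y)\,\dtext y = \int_\X \psi(h(x))\, J_h(x)\,\dtext x$ holds for every Borel $\psi\ge 0$; in particular $|h(\{J_h=0\})|=\int_{\{J_h=0\}} J_h\,\dtext x=0$.

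For direction $(\Leftarrow)$: if $f\in\W^{1,n}(\Y,\R^n)$, then $f$ also satisfies Lusin $N$, and combining the two Lusin $N$ properties with $f\circ h=\id$ forces $J_h(x)\, J_f(h(x))=1$ for a.e.\ $x\in\X$, hence $J_h>0$ a.e. The pointwise identity above then holds a.e., and plugging $\psi=|Df|^n$ into the area formula gives
\[
\int_\Y |Df(y)|^n\,\dtext y \;=\; \int_\X |Df(h(x))|^n\, J_h(x)\,\dtext x \;=\; \int_\X K_{_I}(x,h)\,\dtext x,
\]
yielding both $K_{_I}(\cdot,h)\in L^1(\X)$ and the identity \eqref{eq:identity}.

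For direction $(\Rightarrow)$: assume $K_{_I}(\cdot,h)\in L^1(\X)$, so $K_{_I}<\infty$ a.e.\ and consequently $D^\sharp h=0$ wherever $J_h=0$, i.e.\ $h$ has finite inner distortion. Define a candidate inverse derivative $\Phi\colon\Y\to\R^{n\times n}$ by $\Phi(h(x))=D^\sharp h(x)/J_h(x)$ on the full-measure set $h(\{J_h>0\})\subset\Y$. The area formula immediately produces the bound
\[
\int_\Y |\Phi(y)|^n\,\dtext y \;=\; \int_{\{J_h>0\}} \frac{|D^\sharp h(x)|^n}{J_h(x)^n}\, J_h(x)\,\dtext x \;=\; \int_\X K_{_I}(x,h)\,\dtext x \;<\; \infty.
\]

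The main obstacle is to upgrade this $L^n$-bound on the \emph{formal} derivative $\Phi$ to the conclusion that $f\in\W^{1,n}(\Y,\R^n)$ with distributional gradient equal to $\Phi$. The plan is first to establish $f\in\W^{1,1}_{\loc}(\Y,\R^n)$ by an absolutely-continuous-on-lines argument: on almost every one-dimensional slice of $\Y$ (Fubini), the monotonicity of the one-dimensional restrictions of the homeomorphism $f$, combined with the integrability of $|\Phi|$ along the slice, will force absolute continuity, whose classical derivative must coincide with $\Phi$ at all points of differentiability of $f$ (a full-measure set, by the inverse-function theorem invoked above). Once $f\in\W^{1,1}_{\loc}$, the weak and classical derivatives agree a.e., so $Df=\Phi$ a.e.\ and the preceding $L^n$-bound upgrades to $f\in\W^{1,n}$. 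The identity \eqref{eq:identity} then follows from the same pointwise computation as in $(\Leftarrow)$. This strategy parallels the inverse-regularity results of~\cite{CHM, HK, HKO, Onreg} cited in the introduction, and the hardest technical step is precisely the absolute continuity on lines for $f$, where one must exploit that $h$ is a $\W^{1,n}$ homeomorphism of finite inner distortion rather than rely on a priori Sobolev regularity of $f$.
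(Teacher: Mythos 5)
Your $(\Leftarrow)$ direction (assuming $f\in\W^{1,n}$, deduce $K_{_I}\in L^1$ and the identity) is correct and follows essentially the paper's own argument: a.e.\ differentiability of both maps via Lemma~\ref{W1n}, the chain rule $Df(h(x))\,Dh(x)=\mathbf I$ a.e., Lusin $N$ for both $h$ and $f$ to force $J_h>0$ a.e., Cramer's rule, and the change of variables formula.

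The problem is the $(\Rightarrow)$ direction, which is the genuinely hard implication. The paper handles it by citing the Theorem of~\cite{AIMO} and observing that the proof there only uses the inner-distortion inequality~\eqref{eq:innerdistortion} rather than finite outer distortion. You instead sketch a direct argument, and your sketch has a real gap. You obtain the $L^n$ bound on the formal derivative $\Phi$ cleanly enough, and you correctly observe that a.e.\ differentiability of $f$ with $Df=\Phi$ a.e.\ follows from Lusin $N$ of $h$ together with the pointwise inverse-function theorem. But a homeomorphism that is differentiable a.e.\ with an $L^n$ derivative is \emph{not} automatically in $\W^{1,1}_{\loc}$ — one must establish absolute continuity on lines, and your proposed mechanism for that does not work as stated. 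In particular, the ``monotonicity of the one-dimensional restrictions of the homeomorphism $f$'' is not a property that $f$ possesses: for $n\ge 2$ the coordinate functions of a homeomorphism restricted to a line are merely continuous, not monotone, and the Lebesgue-type monotonicity discussed in Section~\ref{Section2} is a statement about oscillation on compact sets, not about $1$-dimensional slices. Integrability of $|\Phi|$ along a slice cannot by itself force absolute continuity of $f$ along that slice, because $\Phi$ is only known to be the pointwise derivative a.e., and a singular part could a priori survive. The genuine content of~\cite{AIMO} (and of the related inverse-regularity results~\cite{CHM, HK, Onreg}) is precisely a delicate argument ruling out such singular behaviour, typically via coarea/degree-theoretic estimates or via the distributional Jacobian; simply invoking Fubini plus pointwise differentiability does not replace it. You do flag this as the ``hardest technical step,'' which is honest, but as written the step is missing rather than merely compressed, so the proof of the forward implication is incomplete.
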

 The interested reader is referred to \cite{IS} for planar mappings with integrable distortion (Stoilow factorization). The following corollary is immediate.
 
\begin{corollary}\label{quasiconformal}
A homeomorphism $h \colon \X \onto \Y$ of  class $\W^{1,n} (\X, \R^n)$ is quasiconformal if and only if  with $K_{_I}(\cdot, h) \in \mathscr L^\infty (\X)$.
\end{corollary}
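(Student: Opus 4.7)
My plan is to establish the equivalence via a pointwise singular value inequality between $K_I$ and the outer distortion, invoking Theorem \ref{thm1.2} for the Sobolev regularity of the inverse.

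The forward direction is algebraic. If $h$ is $\mathcal K$-quasiconformal, then $|D^\sharp h|\le|Dh|^{n-1}$ pointwise, because the singular values of the cofactor matrix are the $(n-1)$-fold products of the singular values of $Dh$. Substituting into the quasiconformality inequality gives $|D^\sharp h|^n\le|Dh|^{n(n-1)}\le\mathcal K^{n-1}J_h^{n-1}$ a.e., i.e.\ $K_I\le\mathcal K^{n-1}$ a.e.

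For the reverse direction, suppose $K_I\le M$ a.e. Then $K_I\in L^1(\X)$ and Theorem \ref{thm1.2} gives $f\in\W^{1,n}(\Y,\R^n)$. At any $x$ with $J_h(x)>0$, ordering the singular values $\sigma_1\ge\cdots\ge\sigma_n>0$ of $Dh(x)$ yields
\[
K_I(x,h)=\frac{\sigma_1\cdots\sigma_{n-1}}{\sigma_n^{n-1}}\ge\frac{\sigma_1}{\sigma_n},
\]
so $\sigma_1/\sigma_n\le M$ and
\[
\frac{|Dh(x)|^n}{J_h(x)}=\frac{\sigma_1^{n-1}}{\sigma_2\cdots\sigma_n}\le\Big(\frac{\sigma_1}{\sigma_n}\Big)^{n-1}\le M^{n-1}.
\]
This is the quasiconformality inequality with constant $M^{n-1}$ on the set $\{J_h>0\}$.

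The main obstacle is the degeneracy set $E=\{x:J_h(x)=0\}$. For $n=2$ there is nothing to do: $|D^\sharp h|=|Dh|$, so the hypothesis \emph{is} the quasiconformality inequality. For $n\ge3$, boundedness of $K_I$ forces only $D^\sharp h=0$ on $E$, which means $\mathrm{rank}\,Dh\le n-2$ but does not force $Dh$ to vanish. To show $|E|=0$ I would combine the area formula $|h(E)|=\int_E J_h\,\dtext x=0$ (valid once $h$ satisfies Lusin's condition $N$) with condition $N^{-1}$ for $h$, which is equivalent to $N$ for the inverse $f\in\W^{1,n}$ supplied by Theorem \ref{thm1.2}; together they yield $|E|=|h^{-1}(h(E))|=0$. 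Verifying these Lusin conditions from the bounded-distortion hypothesis is the delicate step.
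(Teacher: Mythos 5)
Your proof is correct, and the pointwise singular-value computation you carry out is precisely the content of the inequality $K_{_I}^{1/(n-1)}(x,h)\le K_{_O}(x,h)\le K_{_I}^{n-1}(x,h)$ that the paper cites from \cite[\S 6.4]{IMb} and then invokes in one line; so the core of the argument is the same, just written out from scratch rather than cited. What you add is explicit attention to the degeneracy set $E=\{x:J_h(x)=0\}$, which the paper's deduction passes over silently. Your resolution is the right one and, importantly, is already available within the paper's own toolkit, so the step you flagged as ``delicate'' is in fact routine here: Lemma \ref{W1n} says a $\W^{1,n}$-homeomorphism satisfies Lusin's condition $(N)$, so the change-of-variables formula (Lemma \ref{co-area}) gives $|h(E)|=\int_E J_h\,\dtext x=0$; since $K_{_I}(\cdot,h)\in\mathscr L^\infty(\X)\subset\mathscr L^1(\X)$ on the bounded domain $\X$, Theorem \ref{thm1.2} yields $f=h^{-1}\in\W^{1,n}(\Y,\R^n)$, so $f$ also satisfies condition $(N)$ by Lemma \ref{W1n}, and therefore $|E|=|f(h(E))|=0$. (Indeed the paper's proof of Theorem \ref{thm1.2} already records that $J_h>0$ a.e.\ under exactly these hypotheses.) With that observation your argument is complete and matches the paper's in substance; it is simply more self-contained and more careful about the null set where $J_h$ vanishes.
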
 
 \subsection{Hooke's Low for Materials of Conformal Stored-Energy} In a different direction, the principle of hyper-elasticity is to minimize the given stored energy functional  subject to deformations $\,h : \mathbb X \onto \mathbb Y\,$ of domains made of elastic materials,  \cite{Anb, Bac, Cib, MHb}.  Here we  take on stage the materials of \textit{conformal stored-energy}. This means that the bodies can endure only deformations $\,h : \mathbb X \onto \mathbb Y\,$  whose  gradient $\,Dh\,$ is integrable with power $\,n\,$ (the dimension of the deformed body). A deformation of infinite $n$-harmonic energy would break the internal structure of the material causing permanent damage.  There are examples galore in which one can return the deformed body to its original shape by a deformation of finite conformal energy, but not necessarily via the inverse mapping $\,f \bydef  h^{-1}  : \mathbb Y \onto \mathbb X\,$. The inverse map need not even belong to $\,\mathscr W^{1,n}(\mathbb Y, \mathbb R^n) \,$. On the other hand the essence of Hooke's Low is reversibility. Accordingly, we wish that both $\,h\,$ and $\,f = h^{-1}\,$ have finite conformal energy. Call this model \textit{$\,n\,$-harmonic hyper-elasticity}. It is from this point of view that we arrive a the following $\,n\,$-dimensional variant of the conformal Riemann mapping problem.

 \subsection{Mapping Problems}
 Let $\,\mathbb X , \mathbb Y \subset \mathbb R^n\,$ be bounded domains of the same topological type.  For each of the three problems below find conditions on the pair ($\,\mathbb X, \mathbb Y) \,$ to ensure that:
 \begin{itemize}
 \item [P1)] There exists a bi-Lipschitz deformation $\,h :  \mathbb X \onto \mathbb Y\,$
 \item [P2)] There exists a quasiconformal deformation $\,h :  \mathbb X \onto \mathbb Y\,$
 \item [P3)] There exists  deformation $\,h :  \mathbb X \onto \mathbb Y\,$ of bi-conformal energy
 \end{itemize}
 The following inclusions  P1) $\Longrightarrow$ P2) $\Longrightarrow P3)$ are straightforward.
 \subsection{Ball with Inward Cusp} We shall distinguish a horizontal coordinate axis in $\R^n$,
\begin{equation}
\R^n= \R \times \R^{n-1} =\{(t,x) \colon t \in \mathbb R \textnormal{ and }  x=(x_1, \dots, x_{n-1} ) \in \R^{n-1}   \}\nonumber
\end{equation}

and introduce the
notation
\begin{equation}
 \rho = \abs{x} \bydef\sqrt{ x_1^2 + x_2^2 + \cdots + x^2_{n-1}} .\nonumber
\end{equation}
Consider  a strictly increasing function  $u \colon [0, \infty) \onto [0,\infty)$ of class $\mathscr C^1 (0, \infty) \cap \mathscr C [0, \infty) $. We assume that   $u'$ is increasing in $(0,\infty)$  and
\[ \lim_{\rho \searrow 0} u'(\rho)  = 0 \, . \]
To every such function there corresponds an $(n-1)$-dimensional surface of revolution ${\bf S}_u \in \R_+ \times \R^{n-1}$
\[{\bf S}_u \bydef \{ (t,x) \in \R_+ \times \R^{n-1}  \colon  \,\abs{x} = \rho\;, \;\rho = u(t) \,  \; \}  \,,\; \textnormal {where}\; \mathbb R_+ =[0,\fz) . \]

We shall refer to  ${\bf S}_u$ as {\it a model  cusp} at the origin. Let us emphasize that the case $\, \limsup_{\rho \searrow 0} u'(\rho)  > 0 \,\,$ is \textbf{excluded} from this definition. We may (an do) rescale $u$ so that $u(1)=1$. The model inward cuspy ball is defined by
\[\mathbb B_u \bydef \mathbb B \setminus \{(t,x) \in \R_+ \times \R^{n-1}  \colon \; \abs{x} = \rho \;,\;\rho \le u(t) \, \} \, . \]

 \subsection{Bi-Lipschitz Deformations}  There is no bi-Lipschitz transformation of a cuspy ball (inward or outward as in Figure \ref{fig:cups})  onto a ball without cusp. We say that a cusp cannot be flatten via bi-Lipschitz deformation.
 \begin{figure}[h]
\centering
\includegraphics[width=10cm]{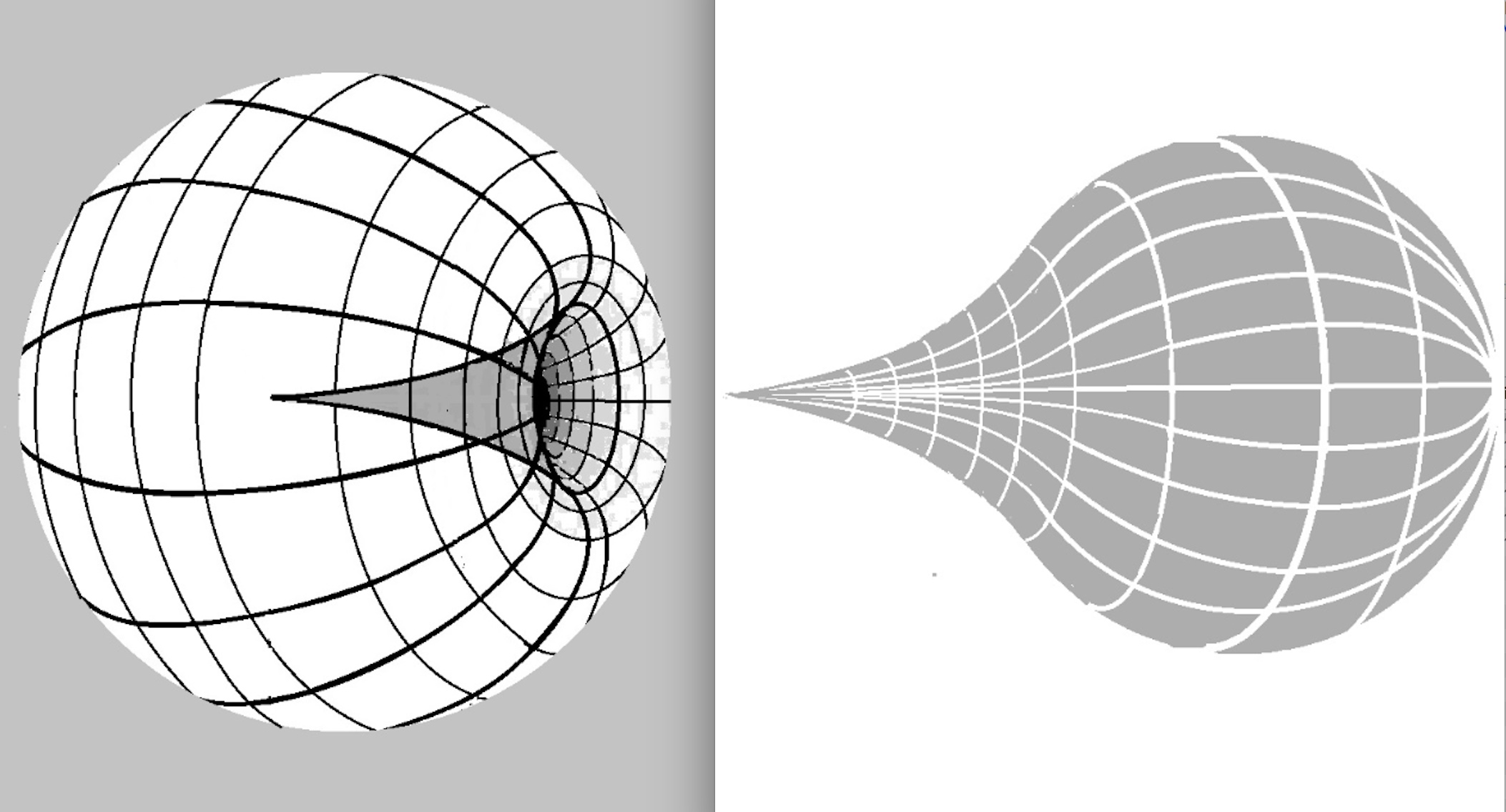}
\caption{Inward and outward cusp  in a ball.}
\label{fig:cups}
\end{figure}
However, there always exists a Lipschitz homeomorphism of a cuspy ball onto a round ball and there is a Lipschitz homeomorphism of the round ball onto the cuspy ball; but these two deformations cannot be inverse to each other.  The same pertains to a \textit{degenerate cusp} defined by $\,u \equiv 0\,$, as in  Figure  \ref{fig:slit}. In this degenerate case, if there would exist a bi-Lipschitz mapping  $\,h: \mathbb B\,\onto \,\mathbb B \setminus \mathbf I\,$,  it would extend as a homeomorphism of $\,\partial \mathbb B\,$ onto $\,\partial (\mathbb B \setminus \mathbf I)\,$, $n \ge 3$, see~\cite{IOb} for more details. It is clear that the conflicting topology of the boundaries is an obstruction to the existence of a bi-Lipschitz deformation. This fact, is also valid for deformations of bi-conformal energy, but it  requires additional arguments, see Theorem~\ref{thm:slitbiconf}.

\subsection{Quasiballs}
 There is a broad literature dealing with $n$-dimensional quasiconformal
variants of the Riemann Mapping Theorem.  F. W. Gehring
and J. V\"ais\"al\"a~\cite{GeVa} raised the question: {\it Which domains $D \subset \R^n$ are quasiconformally equivalent with the unit ball $\mathbb B \subset \R^n$? } Such domains $D$ are called {quasiballs}. The interested reader is referred to the recent book by F. W. Gehring, G.
Martin and B. Palka~\cite{GMPb}. The  Riemann Mapping Theorem gives a complete answer to this question when $n=2$. If $D\varsubsetneq \C$ is a simply connected  domain, then there exists a conformal mapping $h \colon \mathbb B \onto D$.   It is, however, a highly nontrivial question when a domain  $D \subset \R^n$ is a quasiball when  $n \ge 3$.  Among geometric obstructions
are the inward cusps. Indeed, F. W. Gehring
and J. V\"ais\"al\"a~\cite{GeVa} proved that  a ball with inward cusp is not a quasiball. A ball
with outward cusp, however, is  always a quasiball.\\

\begin{figure}[h]
\centering
\includegraphics[width=13cm]{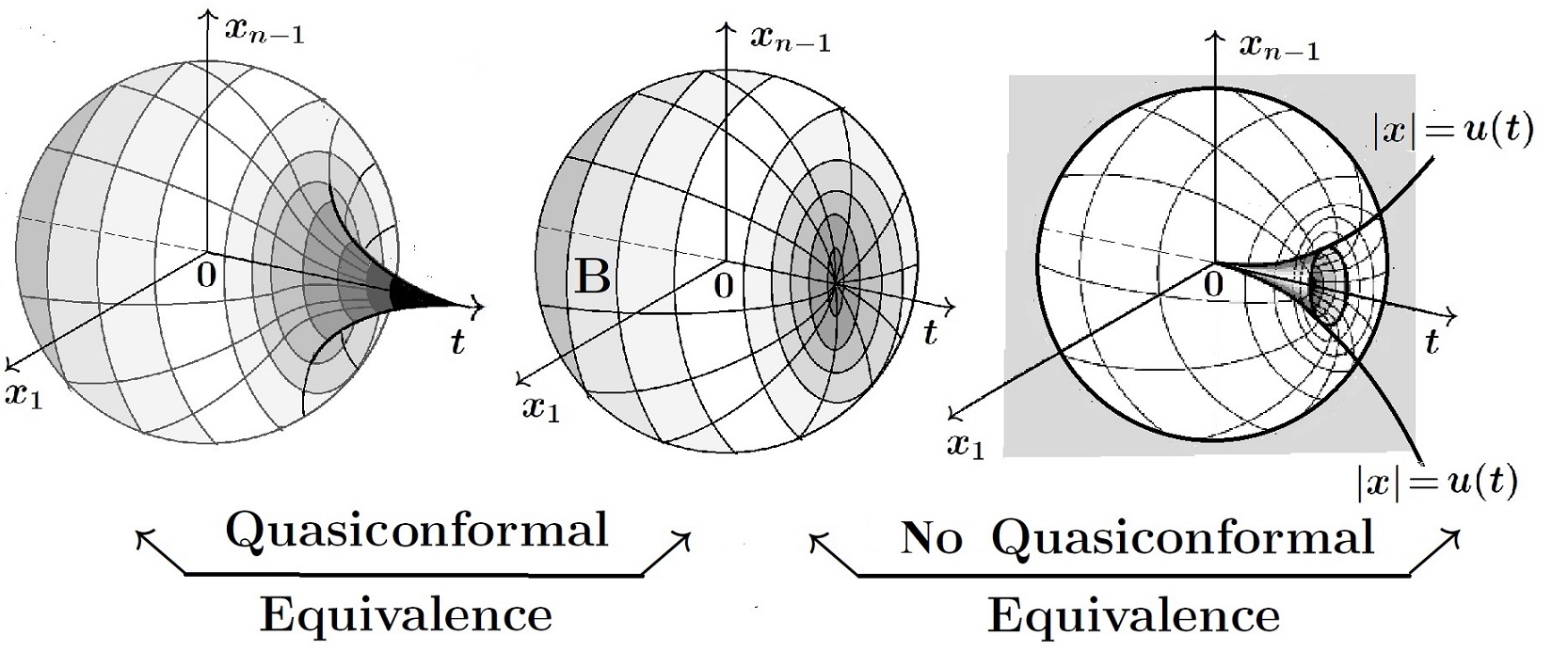}
\caption{Quasiconformal mapping can flatten the outward cusp but not the inward cusp}
\label{fig:slit}
\end{figure}

 \subsection{Inward Slit in a ball} Let us take a look at the pair $\,(\mathbb B, \mathbb B\setminus \mathbf I)\,$ of a unit ball and the ball with a slit along the line segment ${\bf I} \bydef \{(t, x) \in \R \times \R^{n-1} \colon 0\le t<1  \textnormal{ and } \abs{x}=0 \}$.

\begin{figure}[h]
\centering
\includegraphics[width=12cm]{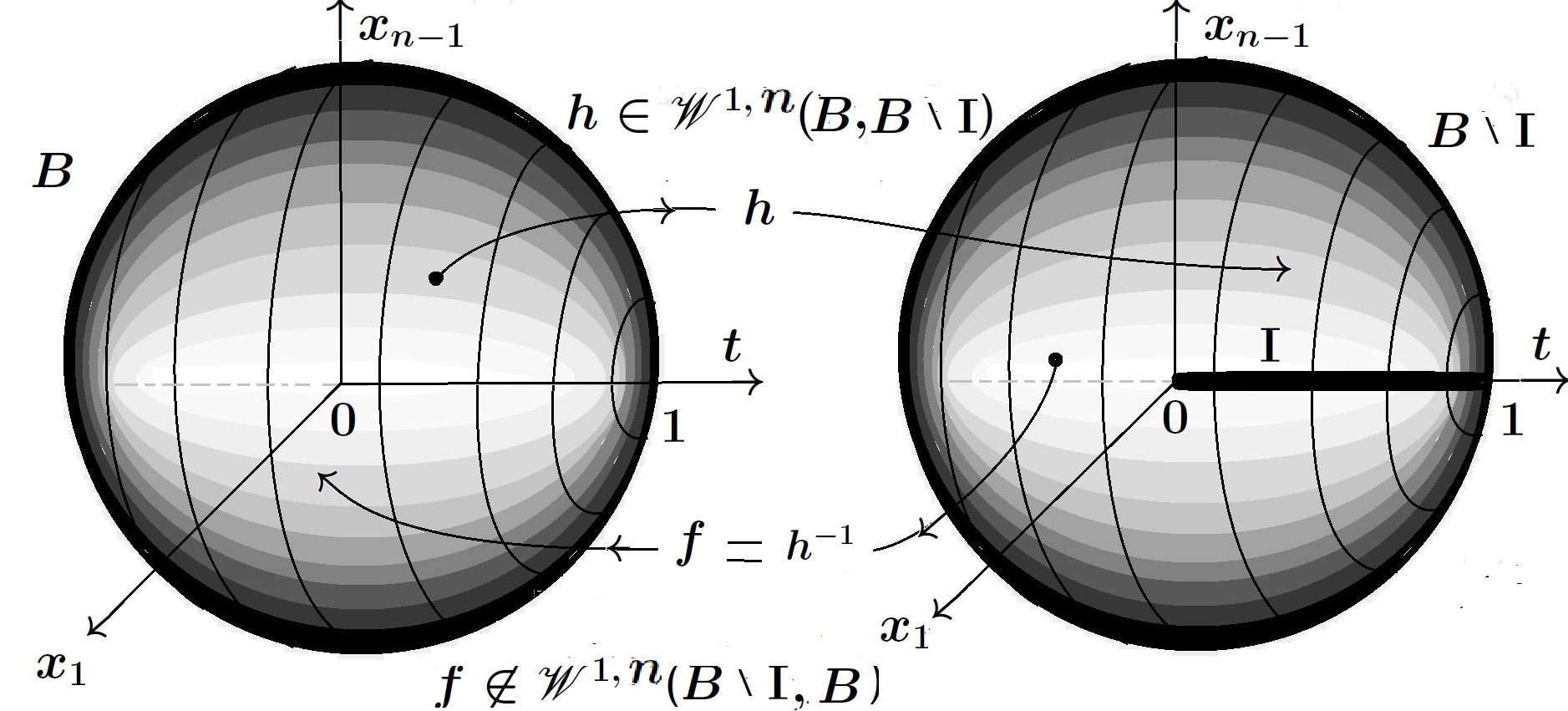}
\caption{Two domains which  are  not of  the same  bi-conformal energy type.}
\label{fig:slit}
\end{figure}
We have already mentioned that there exists a Lipschitz homeomorphism $\,h :\mathbb B \onto \mathbb B \setminus \mathbf I\,$; in particular,  $\,h \in \mathscr W^{1,n}(\mathbb B, \mathbb B \setminus \mathbf I)\,$. The question arises whether there exists a homeomorphism $\,h :\mathbb B \onto \mathbb B \setminus \mathbf I\,$ of finite conformal energy whose inverse $\,f=h^{-1} :  \mathbb B \setminus {\bf I} \onto \mathbb B\,$ also has finite conformal energy. Our next result answers this question in the negative.
\begin{theorem}\label{thm:slitbiconf}
In dimension $\,n\ge 3\,$ the domains $\,\mathbb B\,$ and $\,\mathbb B \setminus \mathbf I\,$ are not of the same bi-conformal energy type; that is, there is no homeomorphism  $ \,h : \mathbb B \onto \mathbb B\setminus {\bf I} $ of finite bi-conformal energy.
\end{theorem}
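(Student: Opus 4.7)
My plan is a proof by contradiction combining a continuous boundary extension with a topological obstruction. Suppose such a homeomorphism $h\colon\mathbb{B}\onto\mathbb{B}\setminus\mathbf{I}$ of finite bi-conformal energy exists, and let $f=h^{-1}$. By Theorem~\ref{thm1.2} applied to $h$, we have $\int_\mathbb{B} K_{_I}(x,h)\,dx=\int_{\mathbb{B}\setminus\mathbf{I}}|Df|^n\,dy<\infty$, together with $\int_\mathbb{B}|Dh|^n\,dx<\infty$ by hypothesis.

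The first step is to analyze the cluster sets of $f$ along the slit. For each $p\in\mathbf{I}$ let $C(p)$ denote the set of all $x\in\overline{\mathbb{B}}$ that arise as a limit $f(y_k)\to x$ for some sequence $y_k\in\mathbb{B}\setminus\mathbf{I}$ with $y_k\to p$. By compactness, $C(p)$ is a nonempty closed subset of $\overline{\mathbb{B}}$. Since $h$ is a homeomorphism and $p\notin h(\mathbb{B})=\mathbb{B}\setminus\mathbf{I}$, no interior point $x^*\in\mathbb{B}$ can lie in $C(p)$: continuity of $h$ would then force $h(x^*)=p$. Hence $C(p)\subset\partial\mathbb{B}$.

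The heart of the argument is to upgrade this into a continuous boundary extension. The plan is to show that each $C(p)$ is a singleton and that $p\mapsto C(p)$ is continuous on $\mathbf{I}$, so that $f$ extends continuously to $\mathbb{B}$ with $f(\mathbf{I})\subset\partial\mathbb{B}$, and symmetrically $h$ extends continuously to a map $\overline{h}\colon\overline{\mathbb{B}}\to\overline{\mathbb{B}\setminus\mathbf{I}}$ that remains injective on $\overline{\mathbb{B}}$. The key ingredients I would use are: (i) for $n\ge 3$ the slit $\mathbf{I}$ has vanishing $n$-capacity and is thus removable for the Sobolev space $\W^{1,n}$, so $f$ extends to a Sobolev map on $\mathbb{B}$; (ii) the integrability of $K_{_I}(\cdot,h)$ yields a one-sided, Poletskii-type modulus inequality for $h$, which controls how $h$ can distort the $n$-modulus of curve families concentrating near a point $p\in\mathbf{I}$; (iii) on the smooth side $\partial\mathbb{B}$, the boundary extension of $h$ is standard for Sobolev homeomorphisms of integrable inner distortion. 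Combined, these produce mutually inverse continuous extensions between the closures.

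Once such extensions are in hand, the proof closes topologically. The boundary of $\overline{\mathbb{B}}$ is the smooth manifold $\mathbb{S}^{n-1}$. On the other side, $\overline{\mathbb{B}\setminus\mathbf{I}}\setminus(\mathbb{B}\setminus\mathbf{I})=\mathbb{S}^{n-1}\cup\mathbf{I}$ is not an $(n-1)$-manifold when $n\ge 3$: the free slit-tip $0\in\mathbf{I}$ lies in the interior of $\overline{\mathbb{B}}$ and has local model a half-open arc meeting an open $n$-ball only at its endpoint. Consequently, the restriction $\overline{h}|_{\partial\mathbb{B}}\colon\mathbb{S}^{n-1}\to\mathbb{S}^{n-1}\cup\mathbf{I}$ would be a continuous bijection between compact Hausdorff spaces, hence a homeomorphism, which is impossible. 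The main obstacle lies in the singleton and continuity properties in step (ii)--(iii): since $\mathbb{B}\setminus\mathbf{I}$ is not a quasiball, classical quasiconformal boundary extension results do not apply directly, and one has to combine the $\W^{1,n}$-energy bounds with the vanishing $n$-capacity of $\mathbf{I}$ in a delicate way to pin down the value of $f$ at each slit point. This is precisely the additional argument (beyond the topological obstruction for bi-Lipschitz maps) required to upgrade the conflicting-boundary-topology heuristic into a rigorous proof.
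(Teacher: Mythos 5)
Your overall strategy (build mutually inverse continuous extensions to the closures, then appeal to a topological obstruction) is genuinely different from the paper's, and as written it has a real gap precisely where you acknowledge one.

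\textbf{The gap.} You never establish that the cluster set $C(p)$ is a singleton for each $p\in\mathbf{I}$, nor that $p\mapsto C(p)$ is continuous. Without this there is no continuous extension $\overline{f}$, no way to see that $\overline{h}$ is injective, and the topological step cannot start. The tools you list do not close it: the boundary--extension machinery in the spirit of Lemma~\ref{extend} requires the reference boundary to be locally quasiconformally flat, and $\partial(\mathbb{B}\setminus\mathbf{I})$ fails this along the slit and at its tip, so extending $f$ across $\mathbf{I}$ is exactly what is \emph{not} standard. A Poletskii-type modulus inequality for $h$ with $K_{I}\in\mathscr{L}^1$ controls curve families quantitatively, but by itself it does not force the cluster set at an interior slit point to collapse to a point rather than to a continuum in $\partial\mathbb{B}$. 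There is also a factual slip in step (i): $\mathbf{I}$ does \emph{not} have vanishing $n$-capacity (any nondegenerate continuum has positive conformal capacity). What holds for $n\ge 3$ is $\mathcal{H}^{n-1}(\mathbf{I})=0$, which does give Sobolev removability of $\mathbf{I}$; cite that instead.

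\textbf{How the paper avoids this.} The paper proves the stronger Theorem~\ref{thm:cut} (nonexistence already when $f\in\W^{1,p}$ with $p>n-1$), and never attempts a global continuous extension of $f$. It only extends $h$ continuously to $\overline{\mathbb{B}}$ via Lemma~\ref{extend} (using that $\partial(\mathbb{B}\setminus\mathbf{I})$ is a neighborhood retract). Then, by Fubini, $f$ restricted to almost every punctured sphere $S_t=\partial B(0,t)\setminus\{x_t\}$ is in $\W^{1,p}(S_t)$; since $p>n-1$ the puncture is removable on the $(n-1)$-sphere (Lemma~\ref{lem:holder}), so $f|_{S_t}$ extends to a homeomorphism of the full sphere $\overline{S_t}$ onto an embedded topological sphere $f(\overline{S_t})\subset\overline{\mathbb{B}}$ touching $\partial\mathbb{B}$ only at $x'_t=f(x_t)$. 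Jordan--Brouwer then yields bounded domains $U_t$ with $\overline{U_t}\cap\partial\mathbb{B}=\{x'_t\}$, and the nesting $U_t\subset U_s$ for $t<s$ forces $x'_t=x'_s$. That is already a contradiction, since the single-valued continuous $h$ would have to send this one point to both $(t,0,\dots,0)$ and $(s,0,\dots,0)$. So the paper replaces your global cluster-set/manifold-recognition argument with a one-parameter family of local removability statements on spheres plus a nested-domain observation: this is both sharper (it covers all $p>n-1$) and sidesteps your unproven singleton step entirely.
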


On one hand  we have:
\begin{example}\label{ex:slit}
There is a homeomorphism $f \colon  \mathbb B\setminus {\bf I}  \onto \mathbb B$ of finite conformal energy such that $h=f^{-1} \in \W^{1,\,p} (\BB, \R^n)$ for all exponents $p<n$.
\end{example}
On the other hand,  Theorem~\ref{thm:slitbiconf}  is a special case of the following.
\begin{theorem}\label{thm:cut}
 If $\ p > n-1 \geqslant 2\,$ then there is no homeomorphism  $h \colon \mathbb B \onto \mathbb B\setminus {\bf I}  $ of finite conformal energy whose inverse $h^{-1}=f\in \W^{1,p} (\mathbb B\setminus {\bf I}, \R^n)$.
\end{theorem}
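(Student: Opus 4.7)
The strategy is to argue by contradiction. Assume $h\colon \mathbb B \to \mathbb B \setminus \mathbf I$ is a homeomorphism with $h \in \W^{1,n}(\mathbb B)$ and $f = h^{-1} \in \W^{1,p}(\mathbb B \setminus \mathbf I)$ for some $p > n-1$. Write ambient coordinates as $(t,x) \in \R \times \R^{n-1}$, and for each $t \in (0,1)$ let $H_t = \{t\} \times \R^{n-1}$ and $D_t = H_t \cap \mathbb B$. The plan is to produce a continuous extension $\tilde f\colon \mathbb B \to \overline{\mathbb B}$ of $f$ that sends the slit $\mathbf I$ into the boundary sphere $\partial \mathbb B$, and then derive a topological contradiction from Brouwer's invariance of domain.

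By Fubini, for a.e.\ $t \in (0,1)$ the transverse slice $f|_{D_t \setminus \{(t,0)\}}$ lies in $\W^{1,p}(D_t \setminus \{(t,0)\})$. Since $p > n-1 = \dim D_t$, Morrey's embedding on $\R^{n-1}$ gives a H\"older continuous representative of $f|_{D_t}$ with exponent $\alpha = 1 - (n-1)/p > 0$ and constant controlled by $\|D_x f\|_{L^p(D_t)}$. Consequently the limit
\[
y(t) \;\bydef\; \lim_{x \to 0} f(t,x)
\]
exists; because $f$ is a homeomorphism onto the open ball and $(t,0) \notin \mathbb B \setminus \mathbf I$, any accumulation point of $f(t,x)$ as $x\to 0$ lies in $\partial \mathbb B$, so $y(t) \in \partial \mathbb B$. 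A further interpolation between the slicewise Morrey bound and H\"older continuity of $f$ along fibers in the $t$-direction (again using $p > n-1 \ge 1$) shows that $y$ is continuous, and hence the prescription $\tilde f(t,0) \bydef y(t)$, $\tilde f = f$ elsewhere, gives a continuous map $\tilde f\colon \mathbb B \to \overline{\mathbb B}$ with $\tilde f(\mathbf I) \subset \partial \mathbb B$.

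To close the argument one needs local injectivity of $y$ at some $t_0 \in (0,1)$. This is where the finite conformal energy of $h$ enters: capacity-type control of cluster sets of $\W^{1,n}$-homeomorphisms at $\partial \mathbb B$ is used to rule out $y$ being constant on any subinterval, so that one can choose $t_0$ and $\delta>0$ with $y$ injective on $(t_0-\delta,t_0+\delta)$. On $U = B_\delta((t_0,0)) \cap \mathbb B$ the restriction $\tilde f|_U$ is then continuous and injective: on $U \setminus \mathbf I$ it coincides with the homeomorphism $f$, on $U \cap \mathbf I$ it equals $y$ which is injective there, and $\tilde f(U \setminus \mathbf I) \subset \mathbb B$ is disjoint from $\tilde f(U \cap \mathbf I) \subset \partial \mathbb B$. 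Brouwer's invariance of domain then forces $\tilde f(U)$ to be open in $\R^n$; but $\tilde f(U) \subset \overline{\mathbb B}$ contains a nontrivial arc of $\partial \mathbb B$, so it cannot be open in $\R^n$, a contradiction. The main technical obstacles are the upgrade from slicewise Morrey continuity to joint continuity of $\tilde f$ across $\mathbf I$ and the local injectivity of $y$; both use the hypothesis $p > n-1$ (which matches the codimension of the slit $\mathbf I$ in $\R^n$) sharply, making $n-1$ the natural threshold in the theorem.
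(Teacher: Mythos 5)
Your opening move---Fubini plus Morrey on $(n-1)$-dimensional slices to pin down a boundary limit value of $f$ at each slit point---parallels the paper's proof, which does the same thing with the spheres $\partial B(0,t)$ in place of your transverse discs $D_t$. But the argument then takes a wrong turn at the decisive step. You assert that finite conformal energy of $h$ ``rule[s] out $y$ being constant on any subinterval,'' and the endgame rests on local injectivity of $y$ together with invariance of domain. The boundary-limit function is in fact \emph{constant}: this is exactly the content of the paper's nested-domain argument. For a.e.\ $t<s$ the bounded Jordan--Brouwer component $U_t$ of $\R^n\setminus f(\partial B(0,t))$ satisfies $U_t\subset U_s$ and $\overline{U_t}\cap\partial\BB=\{x'_t\}$ is a single point, which forces $x'_t=x'_s$ for all admissible $t,s$. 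If your $\tilde f$ really is continuous across $\mathbf I$, then $y(t)=x'_t$ and so $y$ is constant; hence $\tilde f$ is not locally injective at any slit point and the invariance-of-domain contradiction never materializes. The ``capacity-type control'' you invoke, with no details, is not merely vague---it asserts the opposite of what is true.

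What actually closes the argument is the step your proposal omits: a continuous boundary extension of $h$ (not of $f$) to $\overline{\BB}$, obtained from Lemma~\ref{extend} because $\partial(\BB\setminus\mathbf I)$ is a neighborhood retract and $h\in\W^{1,n}(\BB,\R^n)$. With that extension in hand, $x'_t=\lim_k f(z_k)$ for some $z_k\to(t,0,\dots,0)$ gives $h(x'_t)=\lim_k h(f(z_k))=(t,0,\dots,0)$; constancy of $x'_t$ then forces $h$ of a single point of $\partial\BB$ to equal $(t,0,\dots,0)$ for every admissible $t$, which is absurd. This is where the finite-conformal-energy hypothesis on $h$ really enters---to extend $h$ up to $\partial\BB$, not to prevent $y$ from being constant. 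As written, your proof has a genuine gap: the non-constancy claim is both unsupported and false, and no use is ever made of the boundary extension of $h$.
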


The lower bound for the Sobolev  exponent in this theorem is essentially sharp; precisely, we have

\begin{theorem}\label{thm:sharp}
 For every  $\, p < n-1\,$ there is a  homeomorphism  $h \colon \mathbb B \onto \mathbb B\setminus {\bf I}  $ of finite conformal energy  whose inverse $f= h^{-1} \in \W^{1,p} (\mathbb B\setminus {\bf I}, \R^n)$.
\end{theorem}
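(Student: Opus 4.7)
The plan is to construct an explicit rotationally symmetric $h$ and estimate both energies directly.

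Use cylindrical coordinates $(t,\rho,\omega)\in\R\times[0,\infty)\times S^{n-2}$ with $y=(t,x)$, $\rho=|x|$, $\omega=x/|x|$, and look for $h$ of the form $h(t,\rho\omega)=(T(t,\rho),R(t,\rho)\omega)$. This reduces the problem to finding a homeomorphism $\Psi=(T,R)$ between the planar orbit spaces $D=\{t^2+\rho^2<1,\,\rho\ge 0\}$ and $D'=D\setminus\{(T,0):0\le T<1\}$ sending the full axis $\{\rho=0\}$ bijectively onto the target's left half-axis $\{-1<T<0,\,R=0\}$. Such a $\Psi$ is necessarily discontinuous at the corner $(1,0)$ of $\overline D$: the axis approach must head to the slit tip $(0,0)$, whereas the arc approach heads to the distinct boundary point $(1,0)\in\partial D'$.

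I would build $\Psi$ to be smooth and bi-Lipschitz outside a small neighbourhood $U$ of the corner $(1,0)$, and inside $U$ use a power-type ``peeling'' model depending on a tunable exponent $a>1$. The local model, in planar coordinates centred at $(1,0)$, is designed so that (i) a piece of the source boundary is collapsed, producing the slit, and (ii) its Jacobian is governed by a power law with exponent $a$. Since no bi-Lipschitz map $\mathbb B\to\mathbb B\setminus\mathbf I$ exists, a genuinely non-Lipschitz local behaviour at $(1,0)$ is unavoidable. The rotationally symmetric extension of $\Psi$ then gives $h\colon\mathbb B\to\mathbb B\setminus\mathbf I$.

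For the conformal energy, $|Dh|^2\asymp|\nabla T|^2+|\nabla R|^2+(R/\rho)^2$ and $dy=c_n\rho^{n-2}\,d\rho\,dt\,d\omega$, so $\mathsf E_{\mathbb B}[h]$ becomes a planar integral weighted by $\rho^{n-2}$. Outside $U$ the integrand is bounded, and inside $U$ the weight $\rho^{n-2}$ absorbs the power-type blow-up of $|D\Psi|$ for every $a>1$, giving $\mathsf E_{\mathbb B}[h]<\infty$. For the inverse $f=h^{-1}$, a change-of-variables computation shows $|Df(y)|\asymp\dist(y,\mathbf I)^{-\beta(a)}$ near the slit, with $\beta(a)\to 0^+$ as $a\to 1^+$. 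Parametrising a tubular neighbourhood of $\mathbf I$ by its $(n-2)$-dimensional transverse spheres of radius $r$,
\[\int_{\mathbb B\setminus\mathbf I}|Df|^p\,dy\ \lesssim\ \int_0^{\delta}r^{-\beta(a)p}\,r^{n-2}\,dr+O(1),\]
which is finite precisely when $\beta(a)\,p<n-1$. For any prescribed $p<n-1$, choosing $a>1$ close enough to $1$ forces $\beta(a)\,p<n-1$.

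The main obstacle is the construction of the local planar map near $(1,0)$: it must collapse some boundary piece onto the slit tip yet remain injective in the interior, and its singular exponent has to be tuned so that the cylindrical $n$-harmonic integral and the $p$-integral of $|Df|$ are controlled simultaneously. The sharp threshold $p<n-1$ is transparent from the last integral: it arises from combining the $(n-2)$-dimensional transverse measure of $\mathbf I$ with the best radial integrability of $|Df|^p$ compatible with finite conformal energy of $h$.
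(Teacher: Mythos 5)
Your overall strategy—rotational symmetry, reduction to a planar map $\Psi=(T,R)$ between orbit spaces, and reading the exponent $p<n-1$ off a weighted radial integral—is the same as the paper's, which also replaces $\mathbb B$ and $\mathbb B\setminus\mathbf I$ by bi-Lipschitz equivalent ``cylinder-with-cone'' and ``slit cylinder'' models and writes $h(t,x)=(t,R(t,|x|)\,x/|x|)$. In fact the paper's $h$ is an explicit \emph{Lipschitz} map (piecewise affine in the $(t,\rho)$ orbit plane), so the finiteness of $\mathsf E_{\mathbb B}[h]$ is immediate and no tuning is needed.

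The genuine flaw in your argument is the claim that the singular exponent $\beta(a)$ of $|Df|\asymp\dist(y,\mathbf I)^{-\beta(a)}$ can be driven to $0$ by choosing the peeling power $a$ close to $1$. That is false, and if it were true your own integral test $\beta(a)p<n-1$ would let you reach arbitrarily large $p$, contradicting Theorem \ref{thm:cut}. The reason: write $f(s,r\omega)=(s,\varrho(s,r)\,\omega)$ with $r=|y|=\dist(y,\mathbf I)$. The power law governs only the radial derivative $\partial_r\varrho\sim r^{1/a-1}$, but the operator norm of $Df$ also contains the \emph{angular} stretching factor $\varrho(s,r)/r$. Since the boundary extension of $h$ must collapse a set of positive diameter in $\partial\mathbb B$ (an $(n-1)$-dimensional cone in the bi-Lipschitz model) onto the slit $\mathbf I$, the limit $\varrho(s,0)>0$ is forced, and hence $\varrho/r\sim 1/r$ no matter what $a$ is. Thus $\beta=1$ for every admissible construction, and the weighted integral $\int_0^\delta r^{-p}r^{n-2}\,dr$ converges exactly when $p<n-1$; the exponent cannot be improved by tuning $a$, and the linear ($a=1$) choice already saturates the sharp range. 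If you replace the incorrect ``$\beta(a)\to0^{+}$'' heuristic by the observation that $\beta=1$ is forced by the angular term, your outline becomes correct and matches the paper's construction.
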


The borderline case $\,p = n-1\,$ remains open.

\subsection{Main Result}

 Our central question is when the unit ball and the ball with a model inward cusp ${\bf S}_u$ are of the same bi-conformal energy type. Let $\,h : \mathbb B \onto \mathbb B_u\,$ be a deformation of bi-conformal energy.  To predict what  cusps ${\bf S}_u$ can be created it is natural to combine the estimates of the   modulus of continuity of  $\,h\,$ near $0$ with those  for the inverse deformation $\,f = h^{-1} : \mathbb B \setminus \mathbf I \onto \mathbb B\,$. From this point of view, deformations of bi-conformal energy are very different from  quasiconformal mappings. The latter behave like radial stretchings/squeezing; a poor  modulus of continuity is always balanced by a better modulus of continuity of its  inverse. Surprisingly,  a deformation of bi-conformal energy and its inverse may exhibit  the same optimal modulus of continuity ~\cite{IOZmod}, locally at a given point.

Let us invoke  an estimate of the modulus of continuity for  homeomorphisms $h \colon \X \onto \Y$ in $\W^{1,n} (\X, \R^n)$.

\begin{equation}\label{eq:modcont}
\abs{h(x_1)-h(x_2)}^n \le C_n\,\left(\int_{2 \mathbf B} |Dh|^n\,\right)\,  \log^{-1} \left( e + \frac{\textnormal{diam} \mathbf B}{\abs{x_1-x_2}}\right)
\end{equation}
where $ x_1, x_2 \in \mathbf B \bydef B(x_\circ, R) \subset B(x_\circ , 2R) \bydef 2 \mathbf B  \Subset \X$.  \\

Returning to our mapping $h \colon \mathbb B \onto \mathbb B_u  $ and its inverse $\,f : \mathbb B_u \onto \mathbb B $, it turns out that both mappings extend continuously up to the boundary.

\begin{theorem}\label{global}
Let $h \colon \mathbb B\onto\mathbb B_u$ be a homeomorphism of  bi-conformal energy. Then $h$ admits a homeomorphic extension to the boundary, again denoted by $h\colon \overline{\mathbb B}\onto\overline{\mathbb B_u}$.
\end{theorem}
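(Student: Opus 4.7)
The plan is to extend $h$ and its inverse $f=h^{-1}$ separately to continuous maps $\tilde h\colon\overline{\mathbb B}\to\overline{\mathbb B_u}$ and $\tilde f\colon\overline{\mathbb B_u}\to\overline{\mathbb B}$. Once both extensions are in place, the identities $f\circ h=\mathrm{id}_{\mathbb B}$ and $h\circ f=\mathrm{id}_{\mathbb B_u}$ pass to the closures by continuity, making $\tilde h$ the desired boundary homeomorphism.

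The tool for each extension is a Courant--Lebesgue / spherical Fubini argument combined with Morrey's embedding on $(n-1)$-spheres. Fix a boundary point $x_0$ and small $0<r<R$. Spherical Fubini around $x_0$ gives
\[
\int_r^R\Bigl(\int_{\partial B(x_0,s)\cap\mathbb B}|Dh|^n\,d\sigma\Bigr)\,ds\ \le\ \mathsf E_{\mathbb B}[h]<\infty,
\]
and Chebyshev's inequality in $t=\log s$ produces $s\in(r,R)$ with $s\int_{\partial B(x_0,s)\cap\mathbb B}|Dh|^n\,d\sigma\le \mathsf E_{\mathbb B}[h]/\log(R/r)$. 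Morrey's embedding $\mathscr W^{1,n}(\mathbb S^{n-1})\hookrightarrow C^{0,1/n}$, applied after rescaling to the restriction of $h$ to this partial $(n-1)$-sphere, then yields
\[
\osc\bigl(h|_{\partial B(x_0,s)\cap\mathbb B}\bigr)\ \le\ C\,s^{1/n}\Bigl(\int_{\partial B(x_0,s)\cap\mathbb B}|Dh|^n\,d\sigma\Bigr)^{1/n}\ \le\ \frac{C\,\mathsf E_{\mathbb B}[h]^{1/n}}{(\log R/r)^{1/n}},
\]
which vanishes as $R/r\to\infty$.

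Now exploit that $h$ is a homeomorphism: the partial sphere separates $\mathbb B$ into the cap $B(x_0,s)\cap\mathbb B$ and its complement, and the image of the cap has volume bounded by $\int_{\mathrm{cap}}|Dh|^n$, which tends to $0$ as $r,R\to 0$ by absolute continuity of the Lebesgue integral. Since $\mathbb B_u$ minus a small-diameter set remains connected at small scales, the cap's image must be the small component cut off by the (small-diameter) partial sphere image; nesting along $r_k\ll R_k\to 0$ collapses the cluster set $C(h,x_0)$ to a single point, which we define to be $\tilde h(x_0)$. The identical argument extends $f$ continuously at every smooth boundary point of $\mathbb B_u$; at the cusp point $0$, the partial sphere $\partial B(0,s)\cap\mathbb B_u$ is the full sphere minus a thin slice of diameter $\lesssim u(s)\ll s$ (by $\lim_{\rho\searrow 0}u'(\rho)=0$), so Morrey's embedding still applies after a routine Sobolev extension across this slice.

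The main obstacle will be the topological step converting small diameter of the partial sphere's $h$-image into small diameter of the cap's $h$-image. It combines the volume comparison above with the fact that the image of the partial sphere, together with the cluster values at its boundary in $\partial\mathbb B$, bounds the cap's image, so small total diameter of this separating set forces the cap's image (of vanishing volume) to lie in a small ball. At the cusp, an additional subtlety is handling the thin slice cut out by $\mathbf S_u$ for Sobolev extension, which is made feasible by the slow-cusp condition $u'\to 0$.
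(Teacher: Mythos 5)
Your argument is correct in broad strokes but takes a genuinely different path from the paper. The paper obtains the continuous extension of $h$ to $\overline{\BB}$ by citing \cite{IO} (recorded here as Lemma~\ref{extend}), since $\partial\BB$ is locally quasiconformally flat and $\partial\BB_u$ is a neighborhood retract; injectivity of the boundary map is then established in two cases: at $y\ne o$ by applying Lemma~\ref{extend} locally to $f$ on $B(y,r_y)\cap\BB_u$, where $\partial\BB_u$ is still quasiconformally flat, and at the apex $o$ by a monotone-boundary-map argument (Lemmas~\ref{lem:mono}, \ref{lem:27}, \ref{Morry}, \ref{lem:twocomp}, \ref{single}) showing that the nested regions $\overline{U_t}$ with $\partial U_t=C'_t$ shrink to the single point $o'$. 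Crucially, the paper never extends $f$ continuously across $o$. You instead run Courant--Lebesgue directly on both $h$ and $f$, including a direct extension of $f$ at the cusp apex, and then obtain the boundary homeomorphism from the composition identities passing to closures; this is more symmetric and self-contained, avoids the monotonicity machinery, and would in addition yield a modulus of continuity for $f$ at $o$ that the paper's indirect argument does not deliver. It is a valid alternative, but two steps you call routine deserve care: (i) the oscillation bound on $\partial B(o,s)\cap\BB_u$, a sphere with a small cap of relative size $u(s)/s\to 0$ removed, must hold with a constant uniform in $s$ (true, but the "Sobolev extension across this slice" phrasing hides the verification -- a direct chain or John-domain argument is cleaner); and (ii) the separation step (small-diameter image of the partial sphere together with vanishing volume of the cap image forces small diameter of the cap image) needs that the target with a small ball removed remains connected, which is trivial for $\BB$ when extending $f$ but requires a short argument for $\BB_u$ when extending $h$.
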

The existence of such an extension is known~\cite[Corollary 1.1]{IO} if the reference and deformed configurations have locally quasiconformally flat boundaries, see Definition~\ref{quasiflat}. Obviously,  $\D\mathbb B_u$ is not locally quasiconformally flat.

Applying the estimates in ~\eqref{eq:modcont}  would give us a nonexistence  of a deformation of bi-conformal energy from $\mathbb B$ onto $ \mathbb B_u$  with 
 $u(t)={\exp^{-1} (\exp^{\alpha} (1/t))} $, where $\alpha >n$ (applied to both $h$ and $f$ on the boundaries). This seemingly natural approach does not lead to a sharp result. Creating and flatting  cusp  singularities through mappings of bi-conformal  energy is in a whole different scale.
\begin{theorem}
Let $n \ge 3$ and  \[ u(t)=\frac{e}{\exp \left(\frac{1}{t}\right)^\alpha} \, \quad  \textnormal{for } 0 \le t \le 1\, ,  \;  \textnormal{ where } \alpha >0\, . \] Then the domains $ \mathbb B_u$  and $\mathbb B$  are bi-conformally equivalent if and only if $\alpha <n$.
\end{theorem}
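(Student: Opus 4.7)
The statement is an equivalence, so I would prove the two implications separately.

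\textbf{Sufficiency ($\alpha<n$).} I would construct an explicit homeomorphism $h\colon \mathbb{B}\to\mathbb{B}_u$ of bi-conformal energy. Using the rotational symmetry about the $t$-axis, write points in cylindrical coordinates $(t,\rho,\omega)\in\mathbb{R}\times[0,\infty)\times S^{n-2}$ and seek $h$ in the form $h(t,\rho,\omega)=(T(t,\rho),R(t,\rho)\omega)$. The map should agree with the identity outside a neighborhood of the cusp axis and send a small spherical cap on $\partial\mathbb{B}$ onto the cusp surface $\{\rho=u(T)\}$. I would choose an auxiliary increasing profile $\rho^\ast(t)\asymp u(t)^\beta$, with $\beta\in(0,1)$ tuned to balance tangential and radial stretches, that tells how the boundary cap parametrizes $\mathbf{S}_u$; on a tubular neighborhood of $\mathbf{S}_u$ in $\mathbb{B}_u$ one defines $f=h^{-1}$ by a radial interpolation between the identity on the outer shell of the tube and the flattening map on the cusp surface itself. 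Cylindrical Fubini reduces the two conformal energies $\int_{\mathbb{B}}|Dh|^n$ and $\int_{\mathbb{B}_u}|Df|^n$ to finitely many weighted one-dimensional integrals of the form $\int_0^1 u(t)^a\,(u'(t))^b\,t^c\,\dtext t$; the critical one combines the angular factor $u(t)^{n-2}$ from the volume element with the cusp-opening radial derivative, and with $u(t)=\exp(1-t^{-\alpha})$ a substitution $s=t^{-\alpha}$ shows it converges exactly when $\alpha<n$.

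\textbf{Necessity ($\alpha\geq n$).} Suppose such an $h$ exists. By Theorem~\ref{global} it extends to a homeomorphism $\bar h\colon\overline{\mathbb{B}}\to\overline{\mathbb{B}_u}$; set $p_0=\bar h^{-1}(0)$ and $D=\bar h^{-1}(\mathbf{S}_u)$. The plan is to combine two $n$-modulus estimates. For small $r>0$ let $\Gamma_r$ be the family of curves in $\mathbb{B}_u$ joining the spherical cap $S(0,r)\cap\overline{\mathbb{B}_u}$ to a fixed compact $K\subset\mathbb{B}_u$ bounded away from the cusp. A Fubini calculation along the $t$-axis, using the explicit cylindrical cross-sections of $\mathbb{B}_u$, gives a lower bound $M_n(\Gamma_r)\gtrsim \Phi(r)$ in terms of an integral involving $u(t)^{n-2}$. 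On the other hand $f(\Gamma_r)$ is a curve family in $\mathbb{B}$ whose near-end accumulates at $p_0$, and the modulus transformation rule for Sobolev homeomorphisms of integrable distortion, together with Theorem~\ref{thm1.2}, provides
\[
M_n(\Gamma_r)\;\leq\;\int_{\mathbb{B}_u}|Df(y)|^n\,\sigma(y)^n\,\dtext y
\]
for every $\sigma$ admissible for $f(\Gamma_r)$; choosing $\sigma$ adapted to the round geometry of $\mathbb{B}$ near $p_0$ yields $M_n(\Gamma_r)\leq C\log^{-(n-1)}(1/r)$. Matching these two estimates imposes a decay condition on $u$ which, for $u(t)=\exp(1-t^{-\alpha})$, is satisfiable only when $\alpha<n$.

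\textbf{Main obstacle.} The delicate step is the sharpness of the modulus estimate. As the authors already point out, applying the pointwise modulus of continuity \eqref{eq:modcont} to $h$ and $f$ separately would only rule out the much more restrictive double-exponential cusps $u(t)=\exp(-\exp(t^{-\alpha}))$ with $\alpha>n$; to close the factor-$\exp$ gap and reach the single-exponential threshold, I expect one must exploit the \emph{integrability} of the inner distortion $K_I(\cdot,h)\in\mathscr{L}^1(\mathbb{B})$ (equivalently $|Df|^n\in\mathscr{L}^1(\mathbb{B}_u)$, by Theorem~\ref{thm1.2}) through a length-area/coarea argument on the horizontal cross-sections of the cusp, rather than any pointwise Sobolev oscillation bound on $h$ alone. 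This is the same phenomenon that distinguishes Theorem~\ref{thm:cut} from what the modulus of continuity alone would give.
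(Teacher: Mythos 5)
Your plan is sound in outline (explicit construction for sufficiency, an integrability obstruction for necessity), and you correctly diagnose that the pointwise estimate~\eqref{eq:modcont} applied to $h$ and $f$ separately is off by a whole exponential scale. But both halves deviate from the paper's argument, and the necessity half as sketched has a genuine gap.

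For sufficiency the paper (Theorem~\ref{inverse}) does something much simpler than a tubular-neighborhood interpolation with a tuned profile $\rho^\ast\asymp u^\beta$. After replacing $\mathbb B$ and $\mathbb B_u$ by bi-Lipschitz equivalent domains $\X_-\cup\X_+$ and $\Y_-\cup\Y_+$, the deformation is the identity on $\X_-$ and a pure shear in the cusp-axis direction on $\X_+$,
\[
h(t,x)=\left(\frac{u^{-1}(\abs{x})}{\abs{x}}\,t,\;x\right),\qquad u^{-1}(\eta)=\log^{-1/\alpha}\!\left(\frac{e}{\eta}\right),
\]
whose inverse $f$ is \emph{Lipschitz}. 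Thus only $\int_{\X}|Dh|^n$ needs to be estimated, and after $s=\log(e/\rho)$ with $\rho=\abs{x}$ the critical integral reduces to $\int s^{-n/\alpha}\,\dtext s$, convergent precisely for $\alpha<n$. Your scheme would make both energies contribute (extra, unneeded work) and would miss that the inverse can even be taken Lipschitz, which is part of Theorem~\ref{thm:cusp}.

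For necessity the paper does not use modulus of curve families. It combines two direct estimates: (i) the Sobolev embedding on the spheres $S_t=\{|x|=t\}\cap\BB_u$ (Lemma~\ref{Morryonsp}, in the form of Lemma~\ref{Morry}), plus a Fubini selection of $t_i\to0$ with $\int_{S_{t_i}}|Df|^p<1/t_i$, gives the \emph{unconditional} bound $\diam C'_{t_i}\lesssim u(t_i)^{1-(n-1)/p}\,t_i^{-1/p}$ on $C'_{t_i}=\overline{f(S_{t_i})}\cap\partial\BB$; and (ii) a logarithmic modulus-of-continuity estimate for $h$ at $o'$ (Lemma~\ref{modulus}), which uses $\int|Dh|^n<\infty$ together with Lebesgue monotonicity of the boundary map. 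Matching (i) and (ii) forces $\alpha<n$. Your modulus argument is circular as written: the $K_{_I}$-inequality reads $M_n(\Gamma_r)\le\int_\BB K_{_I}(x,h)\,\rho(x)^n\,\dtext x$ for $\rho$ admissible for $f(\Gamma_r)$ (note $\rho$ must live on $\BB$, not $\BB_u$), and to take $\rho$ supported on a thin annulus $A(p_0,\epsilon,R)$ with $\rho\asymp(\log(R/\epsilon))^{-1}|x-p_0|^{-1}$ you must already know that $f$ sends the spherical cap $S(0,r)\cap\overline{\BB_u}$ into $B(p_0,\epsilon)$ — i.e.\ you need an a priori bound on $\diam f(S(0,r)\cap\overline{\BB_u})$ in terms of $r$, which is exactly what the modulus estimate was meant to produce. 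Lemma~\ref{Morry} is precisely the ingredient that breaks this circle, and once you have it the modulus-of-curve-families machinery is no longer needed.
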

Even more,
\begin{theorem}[Main Theorem] \label{thm:cusp}
Let  $n \ge 3$ and  \[ u(t)=\frac{e}{\exp \left(\frac{1}{t}\right)^\alpha} \, \quad \textnormal{for } 0 \le t \le 1\, ,  \;    \textnormal{where } \alpha >0\, . \]
If $\alpha \ge n$ then there is no   homeomorphism  $h \colon \mathbb B \onto \mathbb B_u $ with finite conformal energy whose inverse $h^{-1}=f\in \W^{1,p} (\mathbb B_u , \R^n)$, $p>n-1$. If $\alpha <n$, then  there exists a  homeomorphism  $h \colon \mathbb B \onto \mathbb B_u  $ with finite conformal energy such that  $f$ is Lipschitz.
\end{theorem}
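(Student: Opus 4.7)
This theorem exhibits a sharp dichotomy at $\alpha = n$: non-existence of the pair $(h,f)$ when $\alpha \ge n$, and an explicit existence with $f$ Lipschitz when $\alpha < n$. I would treat the two regimes separately, following in the non-existence direction the strategy already used for Theorem~\ref{thm:cut}.

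\textbf{Non-existence ($\alpha \ge n$).} Suppose for contradiction that $h \colon \mathbb B \onto \mathbb B_u$ satisfies the hypotheses. By Theorem~\ref{global}, $h$ extends to a homeomorphism of closures, and I set $\xi_0 \bydef h^{-1}(0) \in \partial \mathbb B$ (the preimage of the cusp tip). The plan is to derive a quantitative incompatibility between two moduli-of-continuity estimates, one for $h$ and one for $f = h^{-1}$. The $\W^{1,n}$-bound~\eqref{eq:modcont} applied at $\xi_0$ (in a boundary form obtained using Theorem~\ref{global}) yields $|h(\xi)| \lesssim (\log(1/|\xi-\xi_0|))^{-1/n}$, so $h$-images of small half-balls around $\xi_0$ reach the cusp tip only at a logarithmic rate. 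Separately, horizontal cross-sections of the cusp tube $\{(t,x)\in \mathbb B_u : u(t) < |x| < 2u(t)\}$ are $(n-1)$-dimensional, and the hypothesis $p > n-1$ triggers Sobolev--Morrey H\"older estimates of exponent $1-(n-1)/p$ on each slice; a Fubini/ACL argument in $t$ then upgrades this to a control of the oscillation of $f$ along the tube by a power of $u(t)$. Plugging in $u(t) = e\exp(-t^{-\alpha})$ and matching the two bounds, the decay $\exp(-t^{-\alpha})$ of $u$ must outrun the $\exp(-Ct^{-n})$ rate dictated by~\eqref{eq:modcont}, which forces $\alpha < n$. The borderline case $\alpha = n$ is then closed by a logarithmic refinement: a dyadic summation in $t$ combined with the Moser-sharp form of~\eqref{eq:modcont} and the freedom to send $p \to \infty$.

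\textbf{Existence ($\alpha < n$).} I will construct $h$ explicitly. Working in cylindrical coordinates $(t, y)$ with $y \in \mathbb R^{n-1}$, outside a cylindrical neighborhood of the preimage of the cusp segment I set $h$ to be a simple bi-Lipschitz map into $\mathbb B_u$. Inside that neighborhood, I define $h$ via a smooth isotopy that opens the cuspy tongue by pushing a thin neighborhood of the positive $t$-axis outward to radius $u(t)$. Because for $n \ge 3$ a cross-sectional disk in $\mathbb B$ is not homeomorphic to the corresponding cross-sectional annulus of $\mathbb B_u$, the isotopy must genuinely mix $t$ and $y$ rather than proceed fibre by fibre. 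The isotopy is designed so that its inverse stretching factor is uniformly bounded, giving Lipschitz $f$; the concentrated super-exponential expansion toward the cusp is carried entirely by $h$ itself. A direct calculation of $\int_\mathbb B |Dh|^n$ in cylindrical coordinates then reduces the cusp contribution, modulo bounded factors, to an integral involving $(u'(t)/u(t))^n \cdot u(t)^{n-1}$; with $u(t) = e\exp(-t^{-\alpha})$, the polynomial factor $(u'/u)^n \sim \alpha^n t^{-n(\alpha+1)}$ is killed by the exponential decay $u(t)^{n-1}$ precisely when $\alpha < n$, securing $\mathsf E_{\mathbb B}[h] < \infty$.

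\textbf{Main obstacle.} On the non-existence side, the hardest step is pushing the matching all the way to the sharp threshold $\alpha = n$: the naive comparison yields only a strict inequality, and the borderline requires a carefully synchronised dyadic/logarithmic refinement coupling~\eqref{eq:modcont} with the Sobolev--Morrey cross-section bound. On the existence side, the main technical difficulty is assembling the homeomorphism with a \emph{globally} uniform Lipschitz bound for $f$ despite the cross-sectional topology obstruction: the isotopy must be designed so that the Lipschitz constant of $f$ remains uniform even where the cusp thins super-exponentially, which rules out any fibre-wise recipe and forces a genuinely $n$-dimensional construction.
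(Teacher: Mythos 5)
Your overall strategy for both halves points in the right direction, but each half contains a genuine gap that would prevent the argument from reaching the sharp threshold $\alpha=n$.

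\textbf{Non-existence.} Your reading of the strategy (Sobolev--Morrey estimates of exponent $1-(n-1)/p$ on the near-spherical level sets of $\lvert y\rvert$ for $f$, Fubini to pick good radii, and the $\W^{1,n}$ logarithmic modulus of continuity~\eqref{eq:modcont} for $h$ at the pre-image of the cusp tip) is essentially what the paper does, via Lemmas~\ref{lem:27}, \ref{Morry}, and~\ref{modulus}. However, your mechanism for closing the borderline case $\alpha=n$ does not work: there is no ``freedom to send $p\to\infty$,'' since the hypothesis only asserts $f\in\W^{1,p}$ for \emph{some} fixed $p>n-1$, and raising $p$ is not available to you. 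Nor does the paper resort to a dyadic/Moser-sharp refinement. The correct device is already contained in Lemma~\ref{modulus}: the oscillation bound reads $\abs{h(x')-h(o')}\le\varepsilon(\abs{x'-o'})\cdot\log^{-1/n}\bigl(1/\abs{x'-o'}\bigr)$ where the prefactor $\varepsilon(\tau)=C\bigl(\int_{\mathcal B(o',\sqrt\tau)}\abs{Dh}^n\bigr)^{1/n}$ tends to $0$ by absolute continuity of the energy integral. It is precisely this little-$o$ gain, not a change of exponent, that turns the matching into the contradiction $\varepsilon(t_i)\ge C_1 t_i^n\log\bigl(C_2\,t_i^{\beta}\exp(t_i^{-n})\bigr)\to C_1>0$ while $\varepsilon(t_i)\to 0$, and that is what covers $\alpha=n$.

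\textbf{Existence.} Two issues. First, your energy heuristic is wrong: with $u(t)=e\exp(-t^{-\alpha})$ the expression $(u'/u)^n\,u^{n-1}\sim\alpha^n t^{-n(\alpha+1)}\exp\bigl(-(n-1)t^{-\alpha}\bigr)$ is integrable near $t=0$ for \emph{every} $\alpha>0$, so this integrand cannot produce the sharp threshold; a construction whose conformal energy reduced to it would erroneously succeed for all $\alpha$. Second, the claim that a fibre-wise recipe is impossible and that the map must ``genuinely mix $t$ and $y$'' misses the actual solution. The topological obstruction you identify only applies to fibering by the horizontal slices $\{t=\text{const}\}$; the paper instead replaces $\BB$ and $\BB_u$ by bi-Lipschitz model domains $\X_+=\{0<t<\abs{x}<1\}$ and $\Y_+=\{0<s<1,\ u(s)<\abs{y}<1\}$ and then fibres by $\rho=\abs{x}$, stretching only the $t$-coordinate: $h(t,x)=\bigl(\tfrac{u^{-1}(\abs{x})}{\abs{x}}\,t,\ x\bigr)$. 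This is as ``fibre-wise'' as it gets, its inverse is Lipschitz, and one gets $\abs{Dh(t,x)}\le C\big/\bigl(\abs{x}\log^{1/\alpha}(e/\abs{x})\bigr)$, so the cusp contribution to the energy is $\int_0^1\rho^{-1}\log^{-n/\alpha}(e/\rho)\,\dtext\rho$, which converges iff $\alpha<n$. This logarithmic integrand, not the exponential one in your sketch, is where the dichotomy at $\alpha=n$ comes from.
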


\section{Prerequisites}\label{Section2}
Our notation is fairly standard. Throughout the paper $\BB$ denotes the unit ball in $\rn$. We write  $C,C_{1},C_{2},...$ as generic positive constants. These constants may change even in a single string of estimates. The dependence of constant on a parameter $p$ is expressed by the notation $C=C(p)=C_p$ if needed.

We will appeal to  the  Sobolev embedding on spheres, see \cite[Lemma 2.19]{HKb}.
\begin{lemma}\label{Morryonsp}
Let $h \colon \BB\rightarrow \R^n$ be a continuous mapping in the Sobolev class  $\W^{1,p} (\BB, \R^n)$, for some $p>n-1$. Then for almost every $0<t<1$ and every $x,y\in \partial\BB(0,t) = \mathbb S_t$, we have
\begin{equation}
|h(x)-h(y)|\le C\, t^{1-\frac{n-1}{p}}\lf(\int_{\mathbb S_t}|Dh(x)|^p\, \dtext x\r)^{\frac{1}{p}}.\nonumber
\end{equation}
Here the constant $C$ depend only on $n$ and $p$.
\end{lemma}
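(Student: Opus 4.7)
The plan is to reduce the claimed inequality to the classical Morrey embedding on the unit sphere and then rescale to $\mathbb S_t$. Working in polar coordinates $x = t\omega$ with $(t,\omega) \in (0,1) \times \mathbb S^{n-1}$, Fubini's theorem applied to
\begin{equation}
\int_\BB |Dh(x)|^p\,\dtext x = \int_0^1 t^{n-1}\lf(\int_{\mathbb S^{n-1}} |Dh(t\omega)|^p\,\dtext\omega\r)\dtext t\nonumber
\end{equation}
shows that for almost every $t \in (0,1)$ the slice $u_t(\omega) \bydef h(t\omega)$ belongs to $\W^{1,p}(\mathbb S^{n-1},\R^n)$, with tangential gradient dominated pointwise by $|\nabla_{\mathbb S^{n-1}} u_t(\omega)| \le t\,|Dh(t\omega)|$ (the factor $t$ is the chain-rule contribution, and the full gradient dominates the tangential part).

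Next I would invoke the vector-valued Sobolev--Morrey embedding on the compact Riemannian manifold $\mathbb S^{n-1}$: for $p > n-1$ and $u \in \W^{1,p}(\mathbb S^{n-1},\R^n)$,
\begin{equation}
\osc_{\mathbb S^{n-1}} u \;\le\; C(n,p)\lf(\int_{\mathbb S^{n-1}} |\nabla_{\mathbb S^{n-1}} u|^p\,\dtext\omega\r)^{\!1/p}.\nonumber
\end{equation}
This can be derived from the Euclidean Morrey inequality applied in a finite bi-Lipschitz atlas of $\mathbb S^{n-1}$, with the chart constants absorbed into $C(n,p)$; equivalently one integrates $|\nabla_{\mathbb S^{n-1}} u|$ along great-circle arcs and invokes the one-dimensional Sobolev inequality for $p>n-1\ge 1$. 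Applied to $u=u_t$ together with the pointwise tangential-gradient bound from the previous paragraph, this yields, for every $x,y \in \mathbb S_t$,
\begin{equation}
|h(x) - h(y)| \;\le\; C\,t\,\lf(\int_{\mathbb S^{n-1}} |Dh(t\omega)|^p\,\dtext\omega\r)^{\!1/p}.\nonumber
\end{equation}

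The final step is to convert the integral over $\mathbb S^{n-1}$ to one over $\mathbb S_t$ via the surface-area change of variables $\dtext\mathcal H^{n-1}(x) = t^{n-1}\,\dtext\omega$, giving
\begin{equation}
\lf(\int_{\mathbb S^{n-1}} |Dh(t\omega)|^p\,\dtext\omega\r)^{\!1/p} = t^{-(n-1)/p}\lf(\int_{\mathbb S_t} |Dh(x)|^p\,\dtext x\r)^{\!1/p},\nonumber
\end{equation}
so the two factors of $t$ combine to the asserted exponent $t^{1-(n-1)/p}$. The only substantive technical point is the Morrey embedding on $\mathbb S^{n-1}$; everything else is dimensional bookkeeping, and the bound $|\nabla_{\mathbb S_t} h| \le |Dh|$ (valid $\mathcal H^{n-1}$-a.e.\ on $\mathbb S_t$) guarantees that the final constant depends only on $n$ and $p$.
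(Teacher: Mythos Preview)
The paper does not prove this lemma; it simply cites it as \cite[Lemma~2.19]{HKb}. Your argument is the standard one and is correct: Fubini in polar coordinates places $h|_{\mathbb S_t}$ in $\W^{1,p}(\mathbb S_t,\R^n)$ for a.e.\ $t$, the Morrey embedding on the $(n-1)$-dimensional unit sphere (obtained, as you indicate, via a finite bi-Lipschitz atlas) controls the oscillation, and the rescaling produces the exponent $t^{1-(n-1)/p}$ exactly as you compute. One cosmetic remark: the parenthetical about integrating along great-circle arcs and invoking ``the one-dimensional Sobolev inequality for $p>n-1\ge 1$'' is not really an alternative route---along a single fixed arc any $p\ge 1$ would do, and promoting this to a proof still requires a Fubini step to select a good arc, which puts you back to the atlas/Morrey argument. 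The main line of your proof stands as written.
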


It is relatively easy to conclude from this estimate that a $\W^{1,p}$-homeomorphism when $p>n-1$ is differentiable almost everywhere. It also follows that a homeomorphism   $h\colon \X \onto \Y$ in the Sobolev class $\W^{1,n} (\X, \R^n)$ satisfies Lusin's condition $(N)$. This simply means, by definition, that $\abs{h(E)}=0$ whenever $\abs{E}=0$.

\begin{lemma}\label{W1n}
Let $\X,\Y$ be domains in $\R^n$ and $h \colon \X \onto \Y $ be a homeomorphism in the Sobolev class $\W^{1,n}(\X,\Y)$. Then $h$ is differentiable almost everywhere and  satisfies Lusin's condition $(N)$.
\end{lemma}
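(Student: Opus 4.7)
The plan is to combine the spherical Sobolev inequality of Lemma \ref{Morryonsp} (applied with $p = n$) with the topological monotonicity of homeomorphisms to produce a single master oscillation-versus-energy inequality, and then to read off both conclusions from it. Fix a ball $B(x_\circ, R) \Subset \X$. For almost every $t \in (0, R)$, Lemma \ref{Morryonsp} (after a trivial translation to $x_\circ$) gives
\[
\bigl(\osc_{\pa B(x_\circ, t)} h\bigr)^n \le C_n\, t \int_{\pa B(x_\circ, t)} |Dh|^n\, \dtext\sigma .
\]
Since $h$ is a homeomorphism, $h(\overline{B(x_\circ,t)})$ is the closure of the bounded component of $\R^n \setminus h(\pa B(x_\circ, t))$ that contains $h(x_\circ)$, so $\osc_{\overline{B(x_\circ,t)}} h = \osc_{\pa B(x_\circ, t)} h$. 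Averaging the spherical estimate in $t$ over $(R/2, R)$ and applying Fubini produces some $t^\ast \in (R/2, R)$ at which
\[
\bigl(\osc_{B(x_\circ, R/2)} h\bigr)^n \;\le\; C_n \int_{B(x_\circ, R)} |Dh|^n\, \dtext x .
\]

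With this master inequality in hand, Lusin's condition $(N)$ is proved by a Vitali covering argument. Given $E \subset \X$ with $|E| = 0$ and $\varepsilon > 0$, absolute continuity of the Lebesgue integral of $|Dh|^n$ furnishes an open set $U$ with $E \subset U \Subset \X$ and $\int_U |Dh|^n < \varepsilon$. Covering $E$ by a Vitali-type family of balls $B(x_i, r_i)$ whose dilates $B(x_i, 5r_i)$ lie in $U$ and are essentially disjoint, and applying the master inequality on each $B(x_i, 5r_i)$, gives
\[
|h(E)| \;\le\; \sum_i |h(B(x_i, r_i))| \;\le\; C_n \sum_i \int_{B(x_i, 5r_i)} |Dh|^n \;\le\; C_n \int_U |Dh|^n \;<\; C_n \varepsilon ,
\]
and letting $\varepsilon \to 0$ yields $|h(E)| = 0$.

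For almost-everywhere differentiability the cleanest route is Stepanov's theorem, which reduces the claim to showing that the upper Lipschitz quotient $\limsup_{y \to x_\circ} |h(y)-h(x_\circ)|/|y-x_\circ|$ is finite at almost every $x_\circ \in \X$. Applying the master inequality with $R = 4|y - x_\circ|$ yields
\[
\frac{|h(y)-h(x_\circ)|}{|y - x_\circ|} \;\le\; C_n \left(\frac{1}{|B(x_\circ, R)|}\int_{B(x_\circ, R)}|Dh|^n\, \dtext x\right)^{1/n},
\]
which is bounded at every Lebesgue point of $|Dh|^n$, i.e.\ almost everywhere. The main conceptual hurdle is the topological monotonicity step converting the spherical oscillation into a ball oscillation; this is the only place where the full homeomorphism hypothesis (rather than mere continuity) is essential, and it bypasses the far more delicate arguments needed for continuous Sobolev maps that are not injective. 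The remaining ingredients (Vitali covering, Stepanov's theorem, Lebesgue differentiation, absolute continuity of the integral) are entirely standard.
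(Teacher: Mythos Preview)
Your proposal is correct and is precisely the argument the paper hints at but does not spell out: the paper states Lemma~\ref{W1n} without proof, saying only that it is ``relatively easy to conclude from this estimate'' (i.e.\ Lemma~\ref{Morryonsp}), and your derivation of the oscillation--energy inequality from Lemma~\ref{Morryonsp} together with the Vitali/Stepanov machinery is the standard way to fill in those details. One cosmetic remark: in the Vitali step it is the \emph{undilated} balls that are disjoint while the dilates have bounded overlap (or use Besicovitch directly), but this does not affect the conclusion.
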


Due to Lusin's condition $(N)$  we have the following version of change of variables formula, see e.g.~\cite[Theorem 6.3.2]{IMb} or~\cite[Corollary A.36]{HKb}.

\begin{lemma}\label{co-area}
Let $h \colon \X \onto \Y$ be a homeomorphism in the Sobolev class $\W^{1,n} (\X,\rn)$.  If $\eta$ is a nonnegative Borel measurable function on $\rn$ and  $A$  a Borel measurable set in $\X$, then we have
\begin{equation}\label{eq:cvf}
\int_{A}\eta(h(x))|J_h(x)|\, \dtext x=\int_{h(A)}\eta(y)\, \dtext y\, .
\end{equation}
\end{lemma}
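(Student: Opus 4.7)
The plan is to obtain the identity as a direct consequence of the general area formula for almost-everywhere differentiable maps satisfying Lusin's condition~$(N)$, both of which are supplied for us by Lemma~\ref{W1n}. First I would reduce the claim to the case where $\eta$ is the characteristic function of a Borel set $E \subset \R^n$. This reduction is the standard two-step machine: check the identity for indicators, extend to nonnegative simple functions by linearity, and then to a general nonnegative Borel function via monotone convergence, noting that both sides depend linearly and monotonically on $\eta$.

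Next, I would invoke the area formula in the form
\[
\int_A \eta(h(x))\, \abs{J_h(x)}\, \dtext x \;=\; \int_{\R^n} \eta(y)\, N(y,h,A)\, \dtext y,
\]
valid whenever $h$ is a.e.\ differentiable and satisfies $(N)$, where $N(y,h,A) \bydef \#\{x \in A : h(x)=y\}$ is the Banach indicatrix. Lemma~\ref{W1n} provides exactly these two hypotheses for an orientation-preserving $\W^{1,n}$-homeomorphism, so the formula applies. The standard derivation proceeds by decomposing $A$ into countably many Borel pieces on which $h$ is Lipschitz (via Whitney-type coverings of the set of points of approximate differentiability), applying the classical Lipschitz area formula on each piece, and then using $(N)$ to control the complement.

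The injectivity of $h$ now collapses the multiplicity: for every $y \in \R^n$ one has $N(y,h,A)\in\{0,1\}$, and $N(y,h,A)=1$ precisely when $y \in h(A)$. Therefore
\[
\int_A \eta(h(x))\,\abs{J_h(x)}\, \dtext x \;=\; \int_{h(A)} \eta(y)\, \dtext y,
\]
which is the desired identity. One small point to be careful about is that $h(A)$ is a Borel set: since $h$ is a homeomorphism, it maps Borel sets to Borel sets (it is a Borel isomorphism onto its image), so the right-hand integral is well defined.

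The only nonroutine ingredient is the area formula itself under the minimal regularity (a.e.\ differentiability plus $(N)$), and rather than reprove it I would simply cite \cite[Theorem~6.3.2]{IMb} or \cite[Corollary~A.36]{HKb}, as indicated in the excerpt. Thus the main ``obstacle'' is bookkeeping rather than any genuine analytic difficulty: one must verify that the hypotheses of the quoted area formula are met (supplied by Lemma~\ref{W1n}), confirm the multiplicity reduction (supplied by injectivity), and execute the monotone-convergence extension from characteristic functions to general nonnegative Borel $\eta$.
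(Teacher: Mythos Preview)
Your proposal is correct and aligns with the paper's treatment: the paper does not supply its own proof of this lemma but simply cites \cite[Theorem~6.3.2]{IMb} and \cite[Corollary~A.36]{HKb}, relying on Lemma~\ref{W1n} for the a.e.\ differentiability and Lusin's condition~$(N)$. Your sketch expands on exactly this route---area formula plus injectivity to kill the multiplicity, with the standard monotone-convergence reduction to indicators---so there is no discrepancy.
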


Next, we recall a well-known fact that a function in the Sobolev class $\W^{1,p} (\X, \R)$, $\X \subset \R^n$, is locally H\"older continuous with exponent $1-p/n$, provided $p>n$. More precisely, we have the following oscillation lemma.
\begin{lemma}\label{lem:holder}
Let $u \in \W^{1,p} (\X, \R)$ where $\X \subset \R^n$ and $p>n$. Then
\[ \abs{u(x)-u(y)} \le C \, r^{1-\frac{p}{n}} \left( \int_{\mathbb B_r} \abs{\nabla u}^p\right)^\frac{1}{p}\]
for every $x,y \in \mathbb B_r= \mathbb B (z,r) \subset \X$.
\end{lemma}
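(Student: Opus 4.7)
The plan is to follow the classical Morrey–Sobolev embedding argument. Before starting I would flag that the statement as written contains a typographical slip: for $p>n$ the natural (and correct) H\"older exponent is $1-n/p>0$, not $1-p/n<0$; I will prove the estimate with exponent $1-n/p$, which is the standard Morrey inequality.

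By density of smooth functions in $\W^{1,p}(\X,\R)$ I reduce immediately to $u\in C^\infty(\X)\cap \W^{1,p}(\X,\R)$, both sides of the desired inequality being continuous in the $\W^{1,p}$-norm. The heart of the matter is the pointwise bound
\[
|u(x)-u_{\mathbb B_r}|\le C_n\int_{\mathbb B_r}\frac{|\nabla u(y)|}{|x-y|^{n-1}}\,\dtext y,\qquad x\in\mathbb B_r=\mathbb B(z,r),
\]
where $u_{\mathbb B_r}$ denotes the mean value of $u$ on $\mathbb B_r$. This is obtained by writing
\[
u(x)-u(y)=-\int_0^1\nabla u\bigl(x+t(y-x)\bigr)\cdot(y-x)\,\dtext t,
\]
averaging in $y$ over $\mathbb B_r$, interchanging the order of integration, and substituting polar coordinates centred at $x$ (using $\mathbb B_r\subset \mathbb B(x,2r)$ to control the radial integration).

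The key analytic step is an application of H\"older's inequality with conjugate exponent $p'=p/(p-1)$:
\[
\int_{\mathbb B_r}\frac{|\nabla u(y)|}{|x-y|^{n-1}}\,\dtext y\le \lf(\int_{\mathbb B_r}\frac{\dtext y}{|x-y|^{(n-1)p'}}\r)^{1/p'}\lf(\int_{\mathbb B_r}|\nabla u|^p\r)^{1/p}.
\]
The hypothesis $p>n$ is used exactly to ensure $(n-1)p'<n$, so that the singular weight is locally integrable; a direct computation in polar coordinates then gives
\[
\lf(\int_{\mathbb B_r}\frac{\dtext y}{|x-y|^{(n-1)p'}}\r)^{1/p'}\le C\, r^{1-n/p}.
\]
Combining the last two displays yields $|u(x)-u_{\mathbb B_r}|\le C\, r^{1-n/p}\,\|\nabla u\|_{L^p(\mathbb B_r)}$.

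The final step is the triangle inequality: for arbitrary $x,y\in\mathbb B_r$,
\[
|u(x)-u(y)|\le |u(x)-u_{\mathbb B_r}|+|u_{\mathbb B_r}-u(y)|\le 2C\, r^{1-n/p}\lf(\int_{\mathbb B_r}|\nabla u|^p\r)^{1/p},
\]
which is the claimed estimate. No step poses a genuine obstacle; the only subtlety worth highlighting is that the exponent relation $(n-1)p'<n$ is precisely the place where the assumption $p>n$ is indispensable, and it is also what produces the H\"older exponent $1-n/p$ in the conclusion.
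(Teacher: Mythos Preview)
Your proof is correct and is the standard Morrey embedding argument; you also rightly flag the typographical slip in the exponent (it should read $r^{1-n/p}$, consistent with the paper's own preamble stating the H\"older exponent is $1-n/p$). The paper itself does not supply a proof of this lemma at all --- it is listed among the prerequisites as a ``well-known fact'' and simply recalled without argument --- so there is nothing to compare against; your write-up is exactly the kind of self-contained justification one would give if asked to fill in the details.
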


We will employ a higher dimension version of the classical Jordan curve theorem due to Brouwer~\cite{Br}.
\begin{lemma}\label{lem:jbs} (Jordan-Brouwer separation theorem)
A topological $(n-1)$-sphere $\mathcal S$ disconnects $\R^n$ into two components the bounded component denoted by $\mathcal S_\circ$ and the unbounded component denoted by $\mathcal S_\infty$. Their common boundary is $ \overline{\mathcal S_\circ} \cap \overline{\mathcal S_\infty}=  \mathcal S$.
\end{lemma}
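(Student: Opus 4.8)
The plan is to obtain the statement from classical algebraic topology after passing to the one-point compactification $S^n=\R^n\cup\{\infty\}$. Since $\mathcal S$ is a homeomorphic image of the standard sphere $S^{n-1}$ and $\R^n\hookrightarrow S^n$ is an embedding, $\mathcal S$ is also a topologically embedded $(n-1)$-sphere in $S^n$. I would then invoke Alexander duality: for a compact, locally contractible $K\subset S^n$ one has $\tilde H_i(S^n\setminus K)\cong\check H^{\,n-1-i}(K)$. Taking $K=\mathcal S\cong S^{n-1}$ yields $\tilde H_0(S^n\setminus\mathcal S)\cong\Z$ and $\tilde H_i(S^n\setminus\mathcal S)=0$ for $i\ge 1$; in particular the open set $S^n\setminus\mathcal S$ has exactly two connected components, $U$ and $V$. (If one prefers to stay self-contained, the same count follows by writing the embedded sphere as the union of two embedded $(n-1)$-cells glued along an embedded $(n-2)$-sphere and running a Mayer--Vietoris induction down to $S^0$, the only nontrivial input being the auxiliary fact --- itself a short induction on dimension --- that a topologically embedded cell $D\subset S^n$ never separates $S^n$, i.e.\ $\tilde H_*(S^n\setminus D)=0$.)

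Next I would locate the components inside $\R^n$. The point $\infty$ lies in exactly one of them, say $\infty\in V$. As $\mathcal S$ is compact it sits inside some ball $B(0,R)\subset\R^n$, and $\R^n\setminus\overline{B(0,R)}$ is connected (here $n\ge 2$ enters) and disjoint from $\mathcal S$, hence contained in a single component, necessarily $V$. Therefore $U\subset\overline{B(0,R)}$ is bounded, while $\mathcal S_\infty=V\setminus\{\infty\}$ is unbounded and still connected, an open connected subset of an $n$-manifold with $n\ge 2$ remaining connected after deletion of one point. Writing $\mathcal S_\circ=U$, we get $\R^n\setminus\mathcal S=\mathcal S_\circ\sqcup\mathcal S_\infty$ with $\mathcal S_\circ$ bounded, $\mathcal S_\infty$ unbounded, both open and connected.

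For the frontier identity, openness of $\mathcal S_\circ,\mathcal S_\infty$ together with $S^n\setminus\mathcal S=\mathcal S_\circ\sqcup\mathcal S_\infty$ immediately give $\partial\mathcal S_\circ\subseteq\mathcal S$ and $\partial\mathcal S_\infty\subseteq\mathcal S$; I must prove the reverse inclusions. Suppose some $p\in\mathcal S$ satisfied $p\notin\partial\mathcal S_\circ$. Since $\partial\mathcal S_\circ$ is closed, $p$ has a neighbourhood $W$ disjoint from it, and inside the relative neighbourhood $W\cap\mathcal S$ of $p$ I can take a small closed cap $A_2$ around $p$, a topological $(n-1)$-cell, with $A_1=\overline{\mathcal S\setminus A_2}$ a complementary $(n-1)$-cell meeting $A_2$ along an $(n-2)$-sphere; by construction $\partial\mathcal S_\circ\subseteq\mathcal S\setminus A_2\subseteq A_1$. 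By the cell-non-separation fact, $S^n\setminus A_1$ is connected. Now $\mathcal S_\circ$ is open, and since $\partial\mathcal S_\circ\subseteq A_1$ we have $\overline{\mathcal S_\circ}\cap(S^n\setminus A_1)=\mathcal S_\circ$, so $\mathcal S_\circ$ is also closed in $S^n\setminus A_1$; being nonempty it must equal $S^n\setminus A_1$. Then $\mathcal S_\infty\subseteq S^n\setminus\mathcal S_\circ=A_1\subseteq\mathcal S$, contradicting $\mathcal S_\infty\neq\varnothing$ and $\mathcal S_\infty\cap\mathcal S=\varnothing$. Hence $\mathcal S\subseteq\partial\mathcal S_\circ$ and symmetrically $\mathcal S\subseteq\partial\mathcal S_\infty$, so $\overline{\mathcal S_\circ}\cap\overline{\mathcal S_\infty}=\mathcal S$ in $S^n$; since $\mathcal S_\circ$ is bounded its closure is unchanged when computed in $\R^n$, and $\overline{\mathcal S_\infty}^{\,\R^n}=\mathcal S_\infty\cup\mathcal S$, so the identity persists in $\R^n$. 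The one genuinely delicate ingredient is the cell-non-separation statement (equivalently $\tilde H_*(S^n\setminus D)=0$ for an embedded cell $D$), where the induction on dimension does the real work; the remainder is point-set bookkeeping once the homology of the complement is in hand.
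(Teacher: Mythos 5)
The paper does not prove Lemma~\ref{lem:jbs}; it is stated as a classical fact and cited to the literature (the reference \cite{Br} is in fact to M.~Brown's generalized Schoenflies paper, a stronger statement requiring a bicollar, while the separation theorem itself goes back to Brouwer). There is therefore no in-paper argument to compare against, and I will simply assess your proposal on its own.

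Your argument is a correct, standard algebraic-topology proof. The Alexander-duality count $\tilde H_0(S^n\setminus\mathcal S)\cong\Z$ and $\tilde H_i(S^n\setminus\mathcal S)=0$ for $i\ge1$ is right --- one should place \emph{reduced} \v{C}ech cohomology $\check{\tilde H}^{\,n-1-i}(K)$ on the right-hand side of the duality isomorphism (otherwise $\tilde H_{n-1}$ would falsely register a $\Z$), but the output is exactly as you state --- and the passage through the one-point compactification correctly sorts the two components of $S^n\setminus\mathcal S$ into a bounded one and an unbounded one in $\R^n$, using that a connected open subset of an $n$-manifold with $n\ge 2$ stays connected after deleting a point. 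The frontier argument is also sound: given $p\in\mathcal S\setminus\partial\mathcal S_\circ$, the cap decomposition $\mathcal S=A_1\cup A_2$ with $A_2\subset W$ forces $\partial\mathcal S_\circ\subseteq A_1$; since $\mathcal S_\circ\cap A_1=\emptyset$ (as $A_1\subset\mathcal S$), the nonempty set $\mathcal S_\circ$ becomes clopen in the connected set $S^n\setminus A_1$, hence equals it, which swallows $\mathcal S_\infty$ into $A_1\subset\mathcal S$ and yields a contradiction. The one ingredient you outsource, $\tilde H_*(S^n\setminus D)=0$ for a topologically embedded cell $D\subset S^n$, is indeed the substantive lemma behind both the component count and the frontier identity; you flag it correctly and indicate the right proof strategy (induction on the dimension of $D$), so the proposal is complete modulo that cited lemma.
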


Theorem~\ref{global} claims that a homeomorphism $h \colon \BB \onto \BB_u $ of bi-conformal energy can be extended as a homeomorphism from $\overline{\BB}$ onto $\overline{\BB_u}$.
The existence of continuous extension of such a homeomorphism follows from the following result, see~ \cite[Theorem 1.3]{IO}.
\begin{lemma}\label{extend}
Let $\X$ and $\Y$ be bounded domains of finite connectivity. Suppose $\D\X$ is locally quasiconformally flat and $\D\Y$ is a neighborhood retract. Then every homeomorphism $h\colon \X\onto\Y$ in the class $h\in\W^{1,n}(\X, \Y)$ extends to a continuous map $h\colon  \overline\X\to\overline\Y$.
\end{lemma}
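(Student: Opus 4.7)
The plan is to reduce the problem to proving that $h$ has a unique cluster value at each boundary point $x_\circ \in \partial\X$; the neighborhood retract assumption then patches these pointwise limits into a continuous map on $\overline\X$. Fix $x_\circ \in \partial\X$. Local quasiconformal flatness supplies a neighborhood $U \ni x_\circ$ and a quasiconformal homeomorphism $\psi \colon U \cap \overline\X \onto \overline{\BB}^+$ onto the closed upper half-ball with $\psi(x_\circ) = 0$ and $\psi(\partial \X \cap U)$ equal to the flat disk $\{t = 0\} \cap \overline{\BB}$. Since precomposition by a quasiconformal map preserves the Sobolev class $\W^{1,n}$ and alters the $n$-energy by at most a constant depending on the distortion of $\psi$, the pulled back homeomorphism $\tilde h := h \circ \psi^{-1} \colon \BB^+ \to \Y$ has finite conformal energy, and the task reduces to constructing the limit of $\tilde h(z)$ as $z \to 0$ in $\BB^+$.

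For parameters $0 < \delta < r < \tfrac12$ I consider the topological $(n-1)$-sphere
\[
S_{r,\delta} \; := \; \bigl(\partial B(0,r) \cap \{t \ge \delta\}\bigr) \cup \bigl(\{t = \delta\} \cap \overline{B(0,r)}\bigr) \subset \BB^+,
\]
which bounds the spherical cap $V_{r,\delta} := B(0,r) \cap \{t > \delta\} \Subset \BB^+$. Integrating $|D\tilde h|^n$ along the radial foliation $r \mapsto \partial B(0,r)$ and along the horizontal foliation $\delta \mapsto \{t=\delta\}$ gives
\[
\int_0^{1/2} \!\!\int_{\partial B(0,r) \cap \BB^+} \!|D\tilde h|^n \, d\mathcal H^{n-1}\, dr \;+\; \int_0^{1/2} \!\!\int_{\{t = \delta\} \cap \BB^+} \!|D\tilde h|^n \, d\mathcal H^{n-1}\, d\delta \;\le\; 2\, \mathsf E_{\BB^+}[\tilde h] \,<\, \infty.
\]
By Chebyshev's inequality I extract paired sequences $r_k \searrow 0$ and $\delta_k \in (0, r_k/2)$ so that $\tilde h|_{S_{r_k,\delta_k}}\in\W^{1,n}(S_{r_k,\delta_k})$ and $r_k \int_{S_{r_k,\delta_k}} |D\tilde h|^n \, d\mathcal H^{n-1} \to 0$. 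Lemma~\ref{Morryonsp} applied on the spherical-cap piece and the analogous Morrey-Sobolev bound on the flat-disk piece (both legitimate since $p = n > n-1$ on an $(n-1)$-dimensional manifold) then yield $\osc_{S_{r_k,\delta_k}} \tilde h \to 0$.

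By invariance of domain, $\tilde h(S_{r_k,\delta_k})$ is a topological $(n-1)$-sphere in $\Y$; Lemma~\ref{lem:jbs} separates $\R^n$ into bounded and unbounded components, and the open bounded connected set $\tilde h(V_{r_k,\delta_k})$, whose topological frontier in $\R^n$ is exactly $\tilde h(S_{r_k,\delta_k})$, must coincide with the bounded component. A topological sphere of diameter $\varepsilon$ bounds a region of diameter at most $\varepsilon$, so $\diam \tilde h(V_{r_k,\delta_k}) \to 0$, giving a candidate limit $y_\circ \in \overline\Y$ along the cap approach. To upgrade this to full control of the cluster set at $0$, I iterate on a sequence $\delta_{k,j}\searrow 0$ at each scale $r_k$ and pass to a diagonal subsequence; the residual thin strip $B(0, r_k) \cap \BB^+ \cap \{0 < t \le \delta_{k,j}\}$ is then trapped between two nested topological $(n-1)$-spheres in the image whose diameters both tend to zero, forcing its image to collapse to $y_\circ$ as well.

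Setting $\overline h(x_\circ) := y_\circ$ produces a pointwise extension $\overline h \colon \overline\X \to \overline\Y$; the value lies in $\partial\Y$ because $h^{-1}$ is continuous on $\Y$, precluding interior limits. The neighborhood retract hypothesis enters at the final continuity step, where a retraction $\mathsf r \colon V \to \partial\Y$ on a neighborhood $V \supset \partial\Y$ in $\R^n$ glues the boundary values continuously, a feature not automatic when $\partial\Y$ is not locally connected. The principal obstacle is the cap-to-strip upgrade in Step~3: with only $\W^{1,n}$-energy one controls $\tilde h$ on topological spheres strictly inside $\BB^+$ but cannot close the half-sphere $\partial B(0,r) \cap \BB^+$ directly; the inserted horizontal cut $\{t=\delta\}$ and the coupled Chebyshev selection of $(r_k,\delta_k)$ are precisely the device that converts the one-sided oscillation estimate into genuine Jordan-Brouwer shrinking on full $(n-1)$-spheres.
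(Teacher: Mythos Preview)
The paper does not prove this lemma at all; it is quoted verbatim as \cite[Theorem~1.3]{IO} and used as a black box. So there is no ``paper's proof'' to compare against---you are supplying an argument where the authors simply cite one.

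That said, your sketch has a genuine gap at the step you yourself flag as the principal obstacle. The caps $V_{r_k,\delta_{k,j}}=B(0,r_k)\cap\{t>\delta_{k,j}\}$ are compactly contained in $\BB^+$, and for those you correctly get $\diam \tilde h(V_{r_k,\delta_{k,j}})$ small via Jordan--Brouwer. But the residual strip $B(0,r_k)\cap\{0<t\le\delta_{k,j}\}$ touches the flat boundary $\{t=0\}$ and is not contained in \emph{any} cap $V_{r',\delta'}$ with $\delta'>0$; so your assertion that its image is ``trapped between two nested topological $(n-1)$-spheres'' has no candidate for the outer sphere. What you actually need is a bound $\diam\tilde h(S_{r_k,\delta_{k,j}})\le C_k$ that is uniform in $j$ as $\delta_{k,j}\searrow 0$, since then the increasing union $\bigcup_j V_{r_k,\delta_{k,j}}=B(0,r_k)\cap\BB^+$ inherits the same diameter bound. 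The spherical piece of $S_{r_k,\delta_{k,j}}$ is harmless (it does not depend on $j$), but the flat disk $\{t=\delta_{k,j}\}\cap B(0,r_k)$ has fixed radius $r_k$, and the slice energy $\int_{\{t=\delta\}\cap B(0,r_k)}|D\tilde h|^n$ need not stay bounded as $\delta\to 0$: an $L^1$ function of $\delta$ can blow up at $0$. Your Chebyshev selection produced one good $\delta_k$ per scale, not a sequence $\delta_{k,j}\to 0$ with uniformly controlled slice energy, and in general no such sequence exists.

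A second, smaller point: your account of where the neighborhood-retract hypothesis on $\partial\Y$ enters is not convincing. If you really had $\diam\tilde h\bigl(B(0,r_k)\cap\BB^+\bigr)\to 0$, continuity of the extension at $x_\circ$ would be immediate and no retraction would be needed. In \cite{IO} the retract hypothesis is used for a different purpose; if you want to give a self-contained proof you should identify that purpose rather than invoke the hypothesis as a deus ex machina at the end.
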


The assumed boundary regularities are defined as follows.

\begin{definition}\label{retract}
The boundary $\partial\Y$ is a neighborhood retract, if there is a neighborhood $\U\subset\rr^n$ of $\partial\Y$ and a continuous map $\chi:\U\rightarrow\partial\Y$ which is an identity on $\partial\Y$.
\end{definition}
\begin{definition}\label{quasiflat}
The boundary $\D\X$ is said to be locally quasiconformally flat if every point in $\D\X$ has a neighborhood $\U\subset\rr^n$ and a homeomorphism $g:\U\cap\overline{\X}\onto  \BB \cap ( \R^{n-1}\times \R^+)$ which is quasiconformal on $\U\cap\X$; see \cite{Vaib}.
\end{definition}
Recall that $\R^+=[0, \infty)$.
It is also known that a mapping of bi-conformal energy between domains with  locally quasiconformally flat boundaries has a homeomorphic extension up to the boundary, see~ \cite[Corollary 1.1]{IO}. Note that $\D\mathbb B_u$ is not locally quasiconformally flat and this result does not apply in our case.

Nevertheless, Lemma~\ref{extend} tells us that $h$ extends as a continuous mapping $h \colon \overline{\mathbb B} \to \overline{\mathbb B_u}$. Since $h(\overline{\mathbb B} )$ is a compact subset of $\overline{\mathbb B_u}$, it follows that $h$ takes $\overline{\mathbb B}$ onto $\overline{\mathbb B_u}$. Second, it is  a topological fact~\cite{Fl} that such a continuous extension is a monotone mapping $h \colon \overline{\mathbb B} \onto \overline{\mathbb B_u}$:
\begin{proposition}\label{pro:mono}
Suppose that there is a continuous extension  $G \colon \overline{\mathbb B} \onto \overline{\mathbb B}$ of a homeomorphism  $g  \colon {\mathbb B} \onto {\mathbb B}$. Then $G \colon \partial \mathbb B \onto \partial \mathbb B $ is monotone.
\end{proposition}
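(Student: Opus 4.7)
The plan is to verify three things in order: first, that $G$ maps $\partial\mathbb B$ into $\partial\mathbb B$, so that the restriction $G|_{\partial\mathbb B}$ is a continuous self-map of $\partial\mathbb B$; second, that it is surjective onto $\partial\mathbb B$; and third, that each fibre $G^{-1}(y_0)$, $y_0\in\partial\mathbb B$, is connected. The last condition is the standard definition of monotonicity for a continuous surjection between compact Hausdorff spaces, so once these three are in hand the proposition is proved.

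For the first two points, the idea is that $g^{-1}\colon\mathbb B\onto\mathbb B$ sends every compactly contained subset of $\mathbb B$ to a compactly contained subset of $\mathbb B$. If some $x_0\in\partial\mathbb B$ satisfied $G(x_0)\in\mathbb B$, I would pick a closed ball $\overline V\Subset\mathbb B$ with $G(x_0)$ in its interior; continuity of $G$ then forces a relative neighbourhood of $x_0$ in $\overline{\mathbb B}$ into $V$, but its trace on $\mathbb B$ would have to lie in the compact set $g^{-1}(V)\Subset\mathbb B$ while meeting points arbitrarily close to $\partial\mathbb B$, a contradiction. Surjectivity of $G|_{\partial\mathbb B}$ then follows at once from $G(\overline{\mathbb B})=\overline{\mathbb B}$ combined with $G(\mathbb B)=g(\mathbb B)=\mathbb B$: a point of $\partial\mathbb B$ cannot be the image of an interior point, so its preimage must sit on $\partial\mathbb B$.

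For the connectedness of fibres I would fix $y_0\in\partial\mathbb B$ and, for each $\epsilon>0$, introduce
\[
W_\epsilon \;=\; B(y_0,\epsilon)\cap\mathbb B,\qquad C_\epsilon \;=\; \overline{g^{-1}(W_\epsilon)},
\]
the closure being taken in $\overline{\mathbb B}$. Each $W_\epsilon$ is convex, hence connected, and its homeomorphic preimage $g^{-1}(W_\epsilon)$ is an open connected subset of $\mathbb B$; consequently each $C_\epsilon$ is a compact connected subset of $\overline{\mathbb B}$, and the family is decreasing as $\epsilon\searrow 0$. The crux is then to establish the identity $G^{-1}(y_0)=\bigcap_{\epsilon>0}C_\epsilon$. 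The inclusion $\supseteq$ is immediate from continuity of $G$, while for $\subseteq$ one uses that $\mathbb B$ is dense in $\overline{\mathbb B}$ to produce, for each $\epsilon$, interior approximants of any $x$ with $G(x)=y_0$ whose $g$-images lie in $W_\epsilon$. Once this identity is proved, $G^{-1}(y_0)$ is a nested intersection of compact connected subsets of the Hausdorff space $\overline{\mathbb B}$ and is therefore connected by the standard compactness argument, which completes the proof of monotonicity. The only step requiring genuine care is that the closure must be taken in $\overline{\mathbb B}$ rather than in $\mathbb B$, so that boundary preimages of $y_0$ are actually captured by the $C_\epsilon$; everything else is a routine manipulation of the definitions.
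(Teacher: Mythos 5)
The paper does not actually prove Proposition~\ref{pro:mono}; it records it as a known topological fact and cites Floyd~\cite{Fl}, so there is no internal argument to compare yours against. Your proof is correct and self-contained, and it runs along the lines one would expect from the classical theory of cluster sets of homeomorphisms. The three steps — $G(\partial\mathbb B)\subseteq\partial\mathbb B$, surjectivity of the restriction, and the identification of the fibre $G^{-1}(y_0)$ with the nested intersection $\bigcap_{\epsilon>0}\overline{g^{-1}\bigl(B(y_0,\epsilon)\cap\mathbb B\bigr)}$ of nonempty compact connected subsets of $\overline{\mathbb B}$ — all check out: $B(y_0,\epsilon)\cap\mathbb B$ is convex hence connected, $g^{-1}$ carries it to a connected open set, closures of connected sets are connected, closed subsets of $\overline{\mathbb B}$ are compact, and a decreasing intersection of nonempty compacta that are connected is connected. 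Two small remarks. First, $g^{-1}(V)$ is an open set, not a compact one; what the argument really uses (and correctly so) is that $g^{-1}(V)\Subset\mathbb B$, i.e.\ its closure lies in the compact set $g^{-1}(\overline V)\subset\mathbb B$, which already forbids interior approximants of a boundary point from accumulating inside it. Second, since $G|_{\mathbb B}=g$ maps $\mathbb B$ into $\mathbb B$, the full fibre $G^{-1}(y_0)$ over $y_0\in\partial\mathbb B$ is automatically contained in $\partial\mathbb B$; thus the fibre of $G$ on $\overline{\mathbb B}$ coincides with the fibre of its restriction to the boundary, so your connectedness statement delivers exactly what the paper's Morrey-style definition of monotonicity requires.
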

 By the definition, monotonicity, the concept of  Morrey~\cite{Mor}, simply means that for a continuous $h \colon \overline{\X} \to \overline{\Y}$ the preimage $h^{-1} (y_\circ )$ of a point   $y_\circ \in \overline{\Y}$ is a connected set in $\overline{\X}$.
It is worth noting that the converse statement of Proposition~\ref{pro:mono} is also valid when $n=2,3$.  Such an elegant characterization of monotone
mappings of a $2$-sphere onto itself was obtained by Floyd and Fort~\cite{FF}.

In the next lemmas we will analyze the boundary behavior of  continuous extension of  homeomorphism $h \colon \mathbb B \onto \mathbb B_u$ with finite conformal energy. 

\begin{lemma}\label{lem:mono}
 Suppose a homeomorphism $h \colon \mathbb B \onto \mathbb B_u$ lies in the Sobolev class $\W^{1,n} (\mathbb B, \mathbb R^n)$. Then for every $x\in\partial {\BB_u}$ the preimage $h^{-1}(x)$ is a nonempty continuum in $\partial {\mathbb B}$.
\end{lemma}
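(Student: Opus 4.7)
The plan is to combine the continuous boundary extension supplied by Lemma~\ref{extend} with a standard monotonicity argument; the only nontrivial geometric input is local connectedness of $\BB_u$ at every boundary point, including the cusp tip. First I would invoke Lemma~\ref{extend} (whose hypotheses hold here since $\partial\BB$ is trivially locally quasiconformally flat and $\partial\BB_u$ is a neighborhood retract of $\R^n$) to obtain a continuous extension $h\colon\overline\BB\to\overline{\BB_u}$. Given $x\in\partial\BB_u$, approximating $x$ by points $y_k=h(a_k)\in\BB_u$ and extracting a convergent subsequence $a_k\to a\in\overline\BB$ yields, via continuity, $h(a)=x$; since $h|_\BB$ is a homeomorphism onto $\BB_u$, no interior point can map to $\partial\BB_u$, so $a\in\partial\BB$. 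Hence $h^{-1}(x)$ is a nonempty closed, and therefore compact, subset of $\partial\BB$.

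For connectedness I would argue by contradiction: suppose $h^{-1}(x)=A\sqcup B$ with $A,B$ nonempty, closed, and disjoint. Normality of $\overline\BB$ yields disjoint open sets $U\supset A$, $V\supset B$ with $\overline U\cap\overline V=\emptyset$, so the compact set $K=\overline\BB\setminus(U\cup V)$ is disjoint from $h^{-1}(x)$ and $\delta:=\dist(x,h(K))>0$. Shrinking $\delta$ if necessary, local connectedness of $\BB_u$ at $x$ (see below) guarantees that $\Omega:=\BB_u\cap B(x,\delta)$ is connected (with $x\in\overline\Omega$ since $x\in\partial\BB_u$). Then $h^{-1}(\Omega)$ is open and connected in $\BB$, and since $h(K)$ misses $B(x,\delta)$ we have $h^{-1}(\Omega)\subset U\cup V$. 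By continuity of the extension at any $a\in A$, some neighborhood of $a$ in $\overline\BB$ lies inside $U$ and is mapped into $B(x,\delta)$; its intersection with $\BB$ is a nonempty subset of $h^{-1}(\Omega)\cap U$, and symmetrically $h^{-1}(\Omega)\cap V\ne\emptyset$. The connected set $h^{-1}(\Omega)$ thus splits across the disjoint open pair $(U,V)$, a contradiction.

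The main obstacle is verifying local connectedness of $\BB_u$ at the cusp tip $(0,0)$; away from the tip $\partial\BB_u$ is smooth and this is standard. At the tip, the removed ``needle'' $N=\{(t,y)\in\R_+\times\R^{n-1}\colon|y|\le u(t)\}$ lies entirely in $\{t\ge 0\}$, and for any $p=(t_p,y_p)\in\BB_u\cap B(0,r)$ with $t_p\ge 0$ the segment $s\mapsto(t_p-s,y_p)$, $s\in[0,t_p+\varepsilon]$, stays inside $B(0,r)$ and avoids $N$: indeed $u$ is increasing, so $u(t_p-s)\le u(t_p)<|y_p|$ for $s\in[0,t_p]$, and for $s\in(t_p,t_p+\varepsilon]$ one has $t_p-s<0$, placing the point outside $N$ altogether. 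This connects $p$ to the convex, hence path-connected, half-ball $\{t<0\}\cap B(0,r)$, from which any other point of $\BB_u\cap B(0,r)$ is reached by reversing the process. The resulting basis of arbitrarily small connected open neighborhoods of the cusp tip inside $\BB_u$ supplies the $\Omega$ required above and completes the plan.
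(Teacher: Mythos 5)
Your proof is correct, and it takes a genuinely different route from the paper. The paper treats Lemma~\ref{lem:mono} as an immediate consequence of the continuous boundary extension (Lemma~\ref{extend}) together with Floyd's topological theorem, quoted as Proposition~\ref{pro:mono}, which asserts that the boundary map induced by such an extension is automatically monotone; the paper therefore gives no explicit argument for this lemma. You instead prove monotonicity from scratch by the classical separation argument (normality, then connectedness of $h^{-1}(\Omega)$ for a suitable small connected $\Omega\subset\BB_u$), which requires you to supply the one nontrivial geometric input: that $\BB_u\cap B(x,\delta)$ is connected for all small $\delta$, at \emph{every} boundary point $x$, including the cusp tip $o$. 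Your explicit segment construction at the tip handles this cleanly (pushing points to the convex half--ball $\{t<0\}\cap B(o,r)$, using monotonicity of $u$ to avoid the needle). Two small remarks worth flagging: (i) what your argument actually uses and proves is slightly stronger than plain local connectedness --- namely that the \emph{ball-intersections} $\BB_u\cap B(x,\delta)$ themselves are connected for small $\delta$; this stronger form is needed because you take $\Omega$ to be exactly such a ball-intersection so that continuity of the extension at $a\in A$ forces points of $h^{-1}(\Omega)$ arbitrarily close to $a$. (ii) Strictly speaking Proposition~\ref{pro:mono} in the paper is stated for self-homeomorphisms of $\BB$, so even the paper's approach requires the (true but unremarked) observation that $\overline{\BB_u}$ is homeomorphic to $\overline{\BB}$; your self-contained argument sidesteps this. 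Your approach is more elementary and more verifiable; the paper's is shorter but leans on an external topological theorem.
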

%\begin{proof}
%If $x\in\BB_u$, $h^{-1}(x)$ is a single point. Hence we only consider the case when $x\in\partial\BB_u$.

%Assume there exists a point $x\in\partial\BB_u$ with $h^{-1}(x)$ is not a continuum. Since $h:\overline{\mathbb B}\to\overline{\mathbb B_u}$ is continuous, $h^{-1}(x)\subset\D\mathbb B$ is compact, hence there exist $x'_1,x'_2\in \D\BB\cap h^{-1}(x)$ and $r_1>0, r_2>0$ with
%\begin{equation}
%\lf(\BB(x'_1, r_1)\cap\overline{\BB}\r)\cap\lf(\BB(x'_2,r_2)\cap\overline{\BB}\r)=\emptyset\nonumber
%\end{equation}
% and
 %\begin{equation}
 %h^{-1}(x)\cap\lf(\lf(\partial \BB(x'_1,r_1)\cap \D\BB(0,1)\r)\cup\lf(\partial \BB(x'_2,r_2)\cap \D\BB(0,1)\r)\r)=\emptyset.\nonumber
 %\end{equation}
%And since $h(x'_1)=h(x'_2)=x$, we obtain
%\begin{equation}
%\lf(h(\BB(x'_1,\frac{r_1}{2})\cap \D\BB)\r)\cap \lf(h(\BB(x'_2,\frac{r_2}{2})\cap \D\BB)\r)\neq\emptyset.\nonumber
%\end{equation}
%Then by the continuity of $h$, we obtain
%\begin{equation}
%\lf(h(\BB(x'_1,r_1)\cap \BB)\r)\cap \lf(h(\BB(x'_2,r_2)\cap \BB)\r)\neq\emptyset,\nonumber
%\end{equation}
%which contradicts to $h:\BB\rightarrow\BB_u$ is a homeomorphism. Hence for every $x\in\partial\BB_u$, $h^{-1}(x)$ is a continuum.
%\end{proof}

% From now on, we always assume the increasing homeomorphism $u:[0,1]\rightarrow [0,1]$ is satisfying enough, such that $\BB_u$ is neighborhood retract. Then $h$ will be used both for the homeomorphism from the unit ball $\BB$ to the cusp $\BB_u$ and the extended continuous surjective mapping $h:\overline\BB\to\overline{\BB_u}$.
Simplifying writing we set $o\bydef (0,0,...,0)$ and $o' \bydef (1,0,...,0)$. Without loss of generality, we may assume that $h(o')=o$. For every $0<t<1$, we define
\begin{equation}
S_t\bydef \{x\in\BB_u \colon |x|=t\}\ \ {\rm and }\ \ C_t \bydef \{x\in\partial\BB_u \colon |x|=t\}.\nonumber
\end{equation}
Furthermore, let  $S'_t \bydef h^{-1}(S_t)$ and $C'_t \bydef \overline{S'_t}\cap\partial \BB$.

\begin{lemma}\label{lem:27}
Under the assumption of Lemma~\ref{lem:mono} we have $h(C'_t)=C_t$.
\end{lemma}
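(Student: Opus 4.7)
The plan is to prove the two inclusions $h(C'_t)\subseteq C_t$ and $C_t\subseteq h(C'_t)$ separately, making essential use of the continuous extension $h\colon\overline{\BB}\to\overline{\BB_u}$ provided by Theorem~\ref{global}. A preliminary step is to establish that this extension sends $\partial\BB$ exactly onto $\partial\BB_u$. The inclusion $\partial\BB_u\subseteq h(\partial\BB)$ is immediate from Lemma~\ref{lem:mono}. For the reverse, I would invoke monotonicity of the boundary extension: if some $p\in\partial\BB$ had $h(p)\in\BB_u$, then the unique interior preimage $q\in\BB$ of $h(p)$ under the original homeomorphism together with $p$ would give a disconnected two-point set contained in $h^{-1}(h(p))$, contradicting monotonicity.

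The forward inclusion $h(C'_t)\subseteq C_t$ is then a short continuity argument. Given $p\in C'_t$, pick $p_k\in S'_t$ with $p_k\to p$; continuity yields $h(p_k)\to h(p)$, and since $|h(p_k)|=t$ for every $k$ while $h(p)\in\partial\BB_u$ by the preliminary step, we obtain $h(p)\in C_t$.

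The reverse inclusion $C_t\subseteq h(C'_t)$ is the main content. Given $y\in C_t$, the plan is first to produce a sequence $q_k\in S_t$ with $q_k\to y$, and then to transfer this sequence back through $h^{-1}$ and extract a limit. The geometric step is the one I expect to be the main obstacle: writing $y=(s_0,x_0)$ with $|x_0|=u(s_0)$ and $s_0^2+u(s_0)^2=t^2$, note that $t>0$ together with $u(0)=0$ forces $s_0>0$, so the cusp surface $\mathbf{S}_u$ is $C^1$ at $y$. I would then compare the outward normal $y/t$ to the sphere $\{|z|=t\}$ with a normal direction $(-u(s_0)u'(s_0),x_0)$ to $\mathbf{S}_u$, and use that $s_0>0$ while $-u(s_0)u'(s_0)\le 0$ to conclude that these vectors are not parallel. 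Transversality then forces the sphere $\{|z|=t\}$, in every neighborhood of $y$, to contain points on the side $|x|>u(s)$ of the cusp surface; such points lie in $\BB_u$ since $t<1$ and thus belong to $S_t$. With the sequence $q_k$ in hand, set $p_k:=h^{-1}(q_k)\in S'_t$, extract a convergent subsequence $p_{k_j}\to p^*$ from the compactness of $\overline{\BB}$, and apply continuity to obtain $h(p^*)=y$. Since $y\in\partial\BB_u$ while $h$ maps $\BB$ into $\BB_u$, necessarily $p^*\in\partial\BB$; and $p^*\in\overline{S'_t}$ by construction, so $p^*\in C'_t$ with $h(p^*)=y$, completing the argument.
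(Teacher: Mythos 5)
Your argument is correct and, for the main inclusion $C_t\subseteq h(C'_t)$, follows exactly the paper's route: take a sequence in $S_t$ converging to a given $y\in C_t$, pull it back to $S'_t$, extract a convergent subsequence by compactness of $\overline{\BB}$, and use continuity of the boundary extension to conclude the limit $p^*$ lies in $C'_t$ with $h(p^*)=y$. You are more thorough than the paper in two respects: you also prove the easy inclusion $h(C'_t)\subseteq C_t$ (the paper leaves it implicit), and you explicitly justify the geometric fact $C_t\subseteq\overline{S_t}$ needed to start the sequence, which the paper simply asserts. Two small remarks. First, the continuous extension should be credited to Lemma~\ref{extend} rather than Theorem~\ref{global}; the latter is stated under the stronger bi-conformal energy hypothesis, while Lemma~\ref{lem:27} (via Lemma~\ref{lem:mono}) assumes only $h\in\W^{1,n}(\BB,\R^n)$, for which Lemma~\ref{extend} already gives the extension and Proposition~\ref{pro:mono} gives its monotonicity. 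Second, your transversality computation for $C_t\subseteq\overline{S_t}$, while correct, is heavier than necessary: for $y=(s_0,x_0)$ on the cusp with $s_0>0$ and $|y|=t$, decreasing $s_0$ slightly to $s_0-\epsilon$ and rescaling $x_0$ by the factor $c_\epsilon>1$ that keeps the point on $\{|z|=t\}$ produces a point with $|x|=c_\epsilon u(s_0)>u(s_0-\epsilon)$ (since $u$ is increasing), hence in $S_t$ and converging to $y$ as $\epsilon\to0^+$.
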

\begin{proof}

For every $x_t\in C_t$, there exists a sequence $\{x_{t,i}\}\subset S_t$ with $x_t=\lim\limits_{i\rightarrow\fz}x_{t,i}$ and that the corresponding sequence $\{x'_{t,i}\bydef f(x_{t,i})\}$ in $ S'_t$ is also convergent. We write $x'_t\bydef \lim\limits_{i\rightarrow\fz}x'_{t,i}$. Then since $h\colon \overline{\BB}\rightarrow\overline{\BB_u}$ is continuous we have  $h(x'_t)=x_t$. By Lemma~\ref{lem:mono}, $x'_t \in \partial \BB$ and therefore $x'_t\in C'_t$.
\end{proof}

\begin{lemma}\label{Morry}
Suppose that a homeomorphism $h \colon \mathbb B \onto \mathbb B_u$ has finite conformal energy. If the inverse mapping $f=h^{-1} \colon \BB_u\rightarrow \BB$ belongs to the Sobolev class  $\W^{1,p} (\BB_u ,
\R^n) $ for some $p>n-1$,  then for almost every $0<t<1$ and every $x'_t, y'_t\in C'_t$ we have
\begin{equation}\label{eq:sobosphere}
|x'_t-y'_t|\le C|x_t-y_t|^{1-\frac{n-1}{p}}\lf(\int_{S_t}|Df|^pdx\r)^{\frac{1}{p}}.
\end{equation}
Here  $x_t=h(x'_t)$ and $y_t=h(y'_t)$ and $C$ is a positive constant independent of $t$, $x_t$ and $y_t$.
\end{lemma}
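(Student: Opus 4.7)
The plan is to apply Morrey's Sobolev embedding on the $(n-1)$-dimensional sphere $S_t$ to the inverse mapping $f=h^{-1}$, and then pass to the boundary trace $C_t$ by continuity of the homeomorphic extension furnished by Theorem~\ref{global}.

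Writing the Dirichlet-type energy of $f$ in spherical coordinates centered at the origin and invoking Fubini's theorem, the hypothesis $f \in \W^{1,p}(\BB_u, \R^n)$ yields, for almost every $t \in (0,1)$, that the tangential restriction $f|_{S_t}$ lies in $\W^{1,p}(S_t, \R^n)$ with $\int_{S_t}|Df|^p\, dx < \infty$. Fix such a $t$. Since $\dim S_t = n-1 < p$, Morrey's inequality on the cut sphere $S_t$ gives
\[
|f(x) - f(y)| \le C\, |x-y|^{1-\frac{n-1}{p}}\, \Big(\int_{S_t}|Df|^p\, dx\Big)^{1/p} \qquad \textrm{for all } x, y \in S_t,
\]
with $C = C(n,p)$ independent of $t$. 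The independence in $t$ follows from the scaling $y \mapsto y/t$, which maps $S_t$ onto a subdomain of the unit sphere $\s^{n-1}$ and under which both sides transform with the same homogeneity. By Theorem~\ref{global}, $f$ extends continuously to $\overline{\BB_u}$, so for any $x_t, y_t \in C_t \subset \overline{S_t}$ one approximates them by sequences $x_{t,i}, y_{t,i} \in S_t$ (exactly as in the proof of Lemma~\ref{lem:27}) and passes the H\"older estimate to the limit. Invoking Lemma~\ref{lem:27} to identify $f(x_t)=x'_t$ and $f(y_t)=y'_t$ then yields the claim.

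The hard step I expect is justifying the Morrey estimate on $S_t$ with a constant independent of $t$: unlike a round sphere, $S_t$ is a sphere of radius $t$ with a small topological disk excised where the cusp surface ${\bf S}_u$ meets it. After the rescaling, the reduced set $S_t/t \subset \s^{n-1}$ must have uniformly bounded Lipschitz geometry as $t$ varies, and this is ensured by the assumption $u'(\rho) \to 0$ as $\rho \searrow 0$, which forces the removed disk to be uniformly flat relative to its size. Once that uniform geometric control is in hand, the standard Morrey embedding on Lipschitz subdomains of $\s^{n-1}$ delivers the desired universal constant $C(n,p)$.
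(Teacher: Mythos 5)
Your argument follows the paper's proof closely: apply the Sobolev--Morrey embedding on the spherical slice $S_t$ to $f|_{S_t}$, and then transfer the resulting H\"older estimate to the boundary circle $C_t$ by a limiting argument built on Lemma~\ref{lem:27}. This is exactly what the paper does, citing Lemma~\ref{Morryonsp} and passing to the limit along sequences $x'_{t,i},y'_{t,i}\in S'_t$ supplied by Lemma~\ref{lem:27}.

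One genuine issue: invoking Theorem~\ref{global} at this point is circular. Theorem~\ref{global} is proved in Section~4, and its proof passes through Lemma~\ref{single}, which rests on Lemma~\ref{lem:twocomp}, which in turn relies on the present Lemma~\ref{Morry}. Moreover, Theorem~\ref{global} concerns the extension of $h$, not $f$, and does not directly yield a continuous extension of $f$ to $\overline{\BB_u}$ (indeed, $\partial\BB_u$ is not locally quasiconformally flat, so Lemma~\ref{extend} cannot be applied to $f$). What is actually available, and what the paper uses, is only the continuous extension of $h$ to $\overline{\BB}$ coming from Lemma~\ref{extend}. In fact, for passing the estimate to $C_t$ you need no global extension of $f$ at all: the Morrey estimate on $S_t$ already makes $f|_{S_t}$ uniformly continuous, hence it extends by itself to $\overline{S_t}\supset C_t$; Lemma~\ref{lem:27} and continuity of $h$'s extension then identify the limits as $x'_t,y'_t\in C'_t$.

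Your concern about the uniformity of the Morrey constant correctly singles out a point the paper glosses over: Lemma~\ref{Morryonsp} is stated for the full sphere $\mathbb S_t$ and with the factor $t^{1-\frac{n-1}{p}}$, whereas here one needs the estimate on the cut slice $S_t$ with the sharper factor $|x-y|^{1-\frac{n-1}{p}}$. Both upgrades are standard. The $|x-y|$ form is just the local Morrey estimate. For the excision, after rescaling, $S_t/t$ is the unit sphere with a geodesically convex cap removed; for such a domain the intrinsic distance is comparable to chordal distance with a universal constant, independent of the cap's size, because any geodesic entering the cap can be rerouted along the cap's boundary with a detour of length comparable to the direct distance. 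So the uniform constant does not actually hinge on $u'(\rho)\to 0$ (that hypothesis only guarantees the cap shrinks as $t\to 0$), though your instinct to worry about this step was a good one. Apart from the circular citation, your outline is sound.
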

\begin{proof}
Let $x'_t, y'_t\in C'_t$. By Lemma~\ref{lem:27}  there are  two sequences $\{x'_{t,i}\}^\infty_{i=1} $ and $ \{y'_{t,i}\}^\infty_{i=1} $ in $ S'_t$ such that
\begin{equation}
\lim_{i\rightarrow\fz}x'_{t,i}=x'_t,\ \ \lim_{i\rightarrow\fz}y'_{t,i}=y'_t\nonumber
\end{equation}
and
\begin{equation}
\lim_{i\rightarrow\fz}x_{t,i}=x_t \in C_t, ,\ \ \lim_{i\rightarrow\fz}y_{t,i}=y_t \in C_t,  \, .\nonumber
\end{equation}
Here
\begin{equation}
x_{t,i}=h(x'_{t,i}), y_{t,i}=h(y'_{t,i}), x_t=h(x'_t)\ {\rm and}\ y_t=h(y'_t).\nonumber
\end{equation}
By  the classical Sobolev embedding on sphere, Lemma \ref{Morryonsp}, we have
\begin{equation}
|x'_{t,i}-y'_{t,i}|\le C|x_{t,i}-y_{t,i}|^{1-\frac{n-1}{p}}\lf(\int_{S_t}|Df|^pdx\r)^{\frac{1}{p}}.\nonumber
\end{equation}
Passing to the limit, we obtain
\begin{equation}
|x'_t-y'_t|\le C|x_t-y_t|^{1-\frac{n-1}{p}}\lf(\int_{S_t}|Df|^pdx\r)^{\frac{1}{p}}.\nonumber
\end{equation}
%Also from the inequality above, $x'_t\neq y'_t$ will imply $x_t\neq y_t$.
\end{proof}

If $f\in \W^{1,p}(\BB_u,\R^n)$, $p>n-1$, then there is  a decreasing sequence $\{t_i\}_{i=1}^{\fz}$ with $0<t_1<1$, which converges to $0$,  and satisfies~\eqref{eq:sobosphere} and
\begin{equation}
\int_{S_{t_i}}|Df|^p\, \dtext x<\frac{1}{t_i}.\nonumber
\end{equation}
Indeed, if not, then by  Fubini's theorem  for $T\in (0,1)$ we have
\begin{equation}
\int_{\BB_u}|Df(x)|^p \, \dtext x\ge \int_0^T\int_{S_t}|Df(x)|^p\, \dtext x \dtext t\ge\int_0^T\frac{1}{t}\, \dtext t=\fz.\nonumber
\end{equation}
Without loss of generality, we may also assume that $\diam C'_{t_i}$ is decreasing with respect to $t_i$ and $\diam C'_{t_1}<\frac{1}{4}$.

According to Lemma~\ref{lem:27} and Lemma~\ref{Morry} $h \colon C'_t \onto C_t$ is a homeomorphism. Now, Jordan-Brouwer Separation Theorem, Lemma~\ref{lem:jbs}, tells us that.
\begin{lemma}\label{lem:twocomp}
Under the assumptions of Lemma~\ref{Morry} it follows that  $\partial \mathbb B \setminus C'_t$ consists of two disjoint connected open sets whose common boundary is $C'_t$.
\end{lemma}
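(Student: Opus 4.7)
The plan is to identify $C'_t$ as a topologically embedded $(n-2)$-sphere in the $(n-1)$-sphere $\partial \mathbb B$, and then invoke the Jordan--Brouwer separation theorem (Lemma~\ref{lem:jbs}) after reducing dimension by a stereographic projection.

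First I would pin down the topology of the target level set $C_t$. For all sufficiently small $t>0$, in particular for the values $t_i$ constructed just before the lemma, the sphere $\{y \in \R^n : |y|=t\}$ meets $\partial \mathbb B_u$ only along the cusp surface $\mathbf S_u$, since the remaining part of $\partial \mathbb B_u$ lies on the unit sphere $\partial \mathbb B$. Hence
\[
C_t = \{(s,v) \in \R_+ \times \R^{n-1} \colon s^2 + u(s)^2 = t^2,\ |v|=u(s)\}.
\]
Because $u$ is strictly increasing with $u(0)=0$, the map $s \mapsto s^2 + u(s)^2$ is strictly increasing on $[0,\infty)$, so there is a unique $s(t)>0$ solving $s(t)^2 + u(s(t))^2 = t^2$. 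Consequently $C_t$ is the round $(n-2)$-sphere $\{s(t)\} \times \{v \in \R^{n-1} : |v|=u(s(t))\}$, homeomorphic to $\mathbb S^{n-2}$. Since $h \colon C'_t \onto C_t$ is a homeomorphism by Lemma~\ref{lem:27} and Lemma~\ref{Morry}, the set $C'_t$ is a topologically embedded $(n-2)$-sphere in $\partial \mathbb B$.

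It then remains to apply Jordan--Brouwer on the sphere. Pick any $q \in \partial \mathbb B \setminus C'_t$ and let $\Psi \colon \partial \mathbb B \setminus \{q\} \to \R^{n-1}$ be a stereographic projection from $q$. Then $\Psi(C'_t) \subset \R^{n-1}$ is a topological $(n-2)$-sphere, and Lemma~\ref{lem:jbs} splits $\R^{n-1}\setminus \Psi(C'_t)$ into a bounded connected open component $A_\circ$ and an unbounded connected open component $A_\infty$ having common boundary $\Psi(C'_t)$. Pulling back, the complement $\partial \mathbb B \setminus C'_t$ decomposes as the disjoint union of the open connected sets $\Psi^{-1}(A_\circ)$ and $\Psi^{-1}(A_\infty) \cup \{q\}$, where $q$ is interior to the latter because any neighborhood of $q$ in $\partial \mathbb B$ corresponds to a neighborhood of infinity in $\R^{n-1}$. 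Their common boundary in $\partial \mathbb B$ is exactly $C'_t$, which is the desired conclusion. The only genuinely delicate step is the geometric identification of $C_t$ as an $(n-2)$-sphere; once granted, the rest is a standard topological reduction via Jordan--Brouwer.
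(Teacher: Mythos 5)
Your argument is correct and takes essentially the same approach as the paper: recognize $C'_t$ as a topological $(n-2)$-sphere in $\partial\mathbb B$ via the homeomorphism $h\colon C'_t \onto C_t$ (Lemmas~\ref{lem:27} and~\ref{Morry}) and apply Jordan--Brouwer. The paper states the lemma without a written proof, simply citing Lemma~\ref{lem:jbs} after noting that $h$ restricts to a homeomorphism on $C'_t$; your explicit identification of $C_t$ as a round $(n-2)$-sphere and the stereographic reduction to $\R^{n-1}$ correctly fill in the details the paper leaves to the reader.
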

The boundary mapping $h \colon \partial \mathbb B \onto \mathbb B_u$ is monotone. More, however, can be sad about the preimage of the singular point. 
\begin{lemma}\label{single}
Under the assumptions of Lemma~\ref{Morry} we have $h^{-1}(o)=o'$.
\end{lemma}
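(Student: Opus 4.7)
I aim to show $K := h^{-1}(o) = \{o'\}$. By Lemma~\ref{lem:mono}, $K$ is a nonempty continuum in $\partial\mathbb B$ containing $o'$, so it suffices to prove $\diam K = 0$. The strategy is threefold: (i) establish $\diam C'_{t_i}\to 0$ via the Sobolev sphere estimate; (ii) place $K$ inside the component $A_i$ of $\partial\mathbb B\setminus C'_{t_i}$ containing $o'$ via Lemma~\ref{lem:twocomp}; and (iii) argue that $A_i$ is the \emph{small} component of the complement, so that $\diam A_i\to 0$ and consequently $\diam K=0$.

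For step (i), I would apply Lemma~\ref{Morry} to a pair $x'_{t_i},y'_{t_i}\in C'_{t_i}$ realizing the diameter:
\[
\diam C'_{t_i}\le C(\diam C_{t_i})^{1-(n-1)/p}\Big(\int_{S_{t_i}}|Df|^p\Big)^{1/p}.
\]
Since $C_{t_i}=\{(s_i,y):|y|=u(s_i)\}$ with $s_i^2+u(s_i)^2=t_i^2$, we have $\diam C_{t_i}=2u(s_i)$, and convexity of $u$ combined with the cusp condition $u'(\rho)\to 0$ yields $u(s_i)\le s_iu'(s_i)\le t_iu'(t_i)$, so $\diam C_{t_i}/t_i\to 0$. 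Together with the construction bound $\int_{S_{t_i}}|Df|^p<1/t_i$ this gives
\[
\diam C'_{t_i}\le C\,u'(t_i)^{1-(n-1)/p}\,t_i^{(p-n)/p},
\]
which tends to $0$ immediately when $p\ge n$. For the range $n-1<p<n$ the right-hand side blows up in $t_i$, so one needs a finer choice of the subsequence $\{t_i\}$ that exploits $\int_0^1\int_{S_t}|Df|^p\,dt<\infty$ to replace $1/t_i$ by a faster-decaying bound. Because $\diam C'_{t_i}$ is already decreasing in $i$ by construction, handling any subsequence suffices.

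For step (ii), Lemma~\ref{lem:twocomp} writes $\partial\mathbb B\setminus C'_{t_i}=A_i\sqcup B_i$ with $A_i$ the component containing $o'$. Continuity of $h$ together with $h(o')=o$ sends a neighborhood of $o'$ in $\partial\mathbb B$ into the cusp-tip component $\mathcal O_i$ of $\partial\mathbb B_u\setminus C_{t_i}$, and connectedness of $K$ with $h(K)=\{o\}\subset\mathcal O_i$ forces $K\subset A_i$ for every $i$. For step (iii), the inclusion $C'_{t_i}\subset B(c_i,\diam C'_{t_i})\cap\partial\mathbb B$ shows that one component of the complement is contained in the cap (diameter $\le 2\diam C'_{t_i}\to 0$) while the other contains the complement of the cap (diameter close to $2$). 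If $A_i$ were the large component along a subsequence, then $B_i$ would be small with $\diam B_i\to 0$; but the surjectivity of $h|_{\partial\mathbb B}$ onto $\partial\mathbb B_u$ together with $h(\overline{A_i})\subset\overline{\mathcal O_i}$ forces $h(\overline{B_i})\supseteq\overline{\mathcal W_i}$, a set of diameter approaching $2$. This contradicts the uniform continuity of $h$ on the compact $\overline{\mathbb B}$ (guaranteed by the continuous extension from Lemma~\ref{extend}), since $\diam h(\overline{B_i})\to 0$ whenever $\diam B_i\to 0$. Therefore $A_i$ is the small component, $\diam A_i\le 2\diam C'_{t_i}\to 0$, and $K=\{o'\}$.

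The main obstacle is step (i) for $p$ close to $n-1$: the naive bound $1/t_i$ on $\int_{S_{t_i}}|Df|^p$ is insufficient, and one must refine the choice of $\{t_i\}$ using the integrability of $t\mapsto\int_{S_t}|Df|^p$. A secondary technical point in step (iii) is justifying $h(\overline{A_i})\subset\overline{\mathcal O_i}$, which is where monotonicity of the boundary extension and the identification of $C'_{t_i}$ with $h^{-1}(C_{t_i})\cap\partial\mathbb B$ for the chosen sequence enter.
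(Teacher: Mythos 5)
Your overall framework matches the paper's: establish $\diam C'_{t_i}\to 0$, invoke Lemma~\ref{lem:twocomp} to split $\partial\mathbb B\setminus C'_{t_i}$ into two components, and then trap the continuum $h^{-1}(o)$ inside the small component. However, your steps (ii)--(iii) take a genuinely different route from the paper's, and the difference matters. The paper \emph{defines} $U_t$ to be the component of smaller diameter and then shows directly that $h^{-1}(o)\subset\overline{U_t}$: since $C'_t\subset\overline{U_t}$, $h(C'_t)=C_t$ (Lemma~\ref{lem:27}), and $C_t\to o$, the nested decreasing compacta $h(\overline{U_t})$ satisfy $o\in\bigcap_t h(\overline{U_t})$; hence $h^{-1}(o)$ meets $U_t$ (it cannot meet $C'_t$ because $h(C'_t)=C_t\not\ni o$), and by connectedness it lies entirely in $U_t$. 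This neatly sidesteps the part of your argument that causes you trouble, namely proving that the component $A_i$ containing $o'$ is the small one. Your step (iii) attempts this via a uniform-continuity contradiction, but it hinges on the inclusion $h(\overline{A_i})\subset\overline{\mathcal O_i}$, which you correctly flag as not established and which does not follow merely from $h(C'_{t_i})=C_{t_i}$: $A_i$ is defined using $C'_{t_i}=\overline{S'_{t_i}}\cap\partial\mathbb B$, not using $h^{-1}(C_{t_i})\cap\partial\mathbb B$, and the identification of these two sets is not in the paper and would need a separate proof. The paper's $U_\circ$-argument makes that inclusion unnecessary, so I would recommend replacing your steps (ii)--(iii) with it.

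On step (i): you are right that the crude bound $\int_{S_{t_i}}|Df|^p<1/t_i$ together with $\diam C_{t_i}\le 2u(t_i)$ yields $\diam C'_{t_i}\lesssim u(t_i)^{1-(n-1)/p}\,t_i^{-1/p}$, which decays automatically only when $p\ge n$ (then $1-(n-1)/p\ge 1/p$ and $u(t)/t\to 0$ finishes it). For $n-1<p<n$ and a \emph{general} model cusp $u$ this estimate alone does not force $\diam C'_{t_i}\to 0$; one needs either a smarter choice of the sequence $\{t_i\}$ or additional decay of $u$. The paper simply asserts the convergence ``by Lemma~\ref{Morry}'' and does not address this range explicitly, so your concern is legitimate, but note that where Lemma~\ref{single} is actually invoked the situation is safe: Theorem~\ref{global} uses $p=n$, and the nonexistence part of Theorem~\ref{thm:cusp} uses the explicit cusp $u(t)=e\exp(-t^{-\alpha})$, for which $u(t_i)^{1-(n-1)/p}t_i^{-1/p}\to 0$ holds trivially for all $p>n-1$. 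Finally, your intermediate bound $u(s_i)\le t_iu'(t_i)$ is a weakening of the cleaner $\diam C_{t_i}\le 2u(t_i)$ used in the paper; it introduces $u'$ unnecessarily and does not help close the $n-1<p<n$ gap.
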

\begin{proof}
According to Lemma~\ref{lem:twocomp}, $\partial \mathbb B \setminus C'_t$ consists of two disjoint connected open sets whose common boundary is $C'_t$. 
We denote the one with smaller diameter by $\U_t$.  Now, for  $0<t<\tau<t_1$, we have $ U_t\subset U_{\tau}$ and we denote  $U_\circ \bydef \lim\limits_{t\rightarrow 0}\overline{U_t}$. Combining this with continuity of $h \colon \overline{\BB} \onto \overline{\BB_u}$, we obtain
\begin{equation}\label{eq:22}
h(U_\circ )=\lim_{t\rightarrow0}h(\overline{U_t})
\end{equation}
Since $C'_t \subset \overline{U_t}$, $\lim\limits_{t\rightarrow0}C_t=o$ and $h(C'_t)=C_t$, see Lemma~\ref{lem:27}, we have $o \in  h(U_\circ ) \subset h(\overline{U_t})$  for every $0<t<t_1$. By Lemma~\ref{lem:mono} $h^{-1}(o)$ is a continuum, we obtain that $h^{-1}(o)\subset \overline{U_t}$ for every $0<t<t_1$. By Lemma \ref{Morry}, $\diam C'_t$ will converge to $0$ as $t$ goes to $0$. Therefore, also the diameter of $\overline{U_t}$ approaches $0$. Hence  $h^{-1}(o)=o'$.
\end{proof}

We will close  this section to give a precise modulus of continuity estimate for a homeomorphism  $h \colon {\BB} \onto {\BB_u}$ with finite conformal energy. Recall that such a homeomorphism has a continuous extension up to the boundary. Furthermore, the boundary mapping  $h \colon \partial {\BB} \onto  \partial {\BB_u}$ is monotone in the sense of Morrey, see Lemma~\ref{lem:mono}. Monotone mappings enjoy a property which is  commonly known in literature also as monotonicity. This notion goes back to
H. Lebesgue~\cite{Le} in 1907.  To avoid confusion, in the following definition we use the term {monotone in the sense of Lebesgue}.

\begin{definition} \label{def:monoleb}
Let $\X$ be an open subset of $\R^n$. A continuous function $h\colon \overline{\X} \to \R^n$ is  \emph{monotone in the sense of Lebesgue} if for every compact set $K\subset \overline{\X}$ we have
\begin{equation}\label{diamdef}
\diam h(K) = \diam h(\partial K).
\end{equation}
Note that for real-valued functions~\eqref{diamdef} can be stated as
\[
\min_{K}h=\min_{\partial K} h \le \max_{\partial K} h = \max_{K}h.
\]
\end{definition}

\begin{remark}
A folding map is a characteristic example of  continuous nonmonotone mapping which is monotone in the sense of Lebesgue.
\end{remark}

\begin{lemma}\label{modulus}
Let  $h \colon \BB\rightarrow\BB_u$ be a homeomorphism  with finite conformal energy. If $h(o')=o$, then  there exists an increasing function $\varepsilon  \colon [0,1) \to [0, \infty)$ with $\lim\limits_{t\to 0+} \varepsilon (t)=0$ such that for $x'\in \overline{ \BB}$ with $0<|x'-o'|<1$ we have
%\begin{equation}
%\tilde \BB(o',|o'-x'|^{\frac{1}{2}}):=\BB\cap \BB(o',|o'-x'|^{\frac{1}{2}}).\nonumber
%\end{equation}
%Then for arbitrary $x'\in \BB$ which is closed enough to $o'$, we have
\begin{equation}\label{moduin}
|h(x')-h(o')|\le \frac{\varepsilon (|x'-o'|)}{\log^{\frac{1}{n}}\left( \frac{1}{|x'-o'|}\right)}  \, .
\end{equation}

\end{lemma}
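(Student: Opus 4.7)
The obstacle in proving \eqref{moduin} is that $o'$ lies on $\partial \BB$, so the interior modulus of continuity \eqref{eq:modcont} does not apply directly at $o'$. My plan is to extend $h$ across $\partial \BB$ via the conformal inversion $\sigma(x)\bydef x/|x|^2$, which fixes $\partial \BB$ pointwise and, being conformal, preserves the $n$-Dirichlet integral under composition. This reduces the boundary estimate to an interior one to which \eqref{eq:modcont} applies.

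First I would invoke Lemma~\ref{extend} to extend $h$ continuously to $\overline{\BB}$; this is legitimate because $\partial \BB$ is a smooth sphere (hence locally quasiconformally flat) and $\partial \BB_u$ is a neighborhood retract. I then define $\hat h\colon B(0,2)\to\R^n$ by
\[
\hat h (x) =
\begin{cases}
h(x), & x \in \overline{\BB}, \\
h(\sigma(x)), & x \in B(0,2) \setminus \overline{\BB}.
\end{cases}
\]
The two pieces agree on $\partial \BB$ (where $\sigma$ is the identity), so $\hat h$ is continuous, and since $\sigma$ is conformal the chain-rule identity $|D(h\circ\sigma)|^n = |Dh\circ\sigma|^n\, |J_\sigma|$ combined with the usual change of variables gives $\int_{B(0,2)} |D\hat h|^n \le 2\int_{\BB} |Dh|^n < \infty$. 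Hence $\hat h\in \W^{1,n}(B(0,2),\R^n)$.

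For $r\bydef|x'-o'|\in(0,1/4)$, apply \eqref{eq:modcont} to $\hat h$ with $\mathbf{B}=B(o',\sqrt r)$: both $x'$ and $o'$ belong to $\mathbf{B}$, while $2\mathbf{B}=B(o',2\sqrt r)\Subset B(0,2)$. This yields
\[
|h(x')-h(o')|^n \le \frac{C_n\int_{B(o',2\sqrt r)}|D\hat h|^n}{\log(e+2/\sqrt r)}.
\]
Because $\sigma$ is bi-Lipschitz near $\partial \BB$, $\sigma(B(o',2\sqrt r)\setminus\overline{\BB})\subseteq B(o',C\sqrt r)\cap\BB$ for an absolute constant $C$, so the numerator is at most $2\int_{B(o',C\sqrt r)\cap\BB}|Dh|^n$, while the denominator dominates $\tfrac12\log(1/r)$ for small $r$. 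Setting
\[
\varepsilon(r) \bydef \Bigl(4\,C_n \int_{B(o',C\sqrt r)\cap\BB} |Dh|^n\Bigr)^{1/n} \quad\text{for } r\in[0,1/4],
\]
and extending $\varepsilon$ by a sufficiently large constant on $[1/4,1)$ (where the trivial bound $|h(x')-h(o')|\le \diam \BB_u$ together with the boundedness of $1/\log^{1/n}(1/r)$ suffices) yields an increasing function with $\lim_{r\to 0^+}\varepsilon(r)=0$, the latter holding by absolute continuity of the Lebesgue integral since $|Dh|^n\in L^1(\BB)$. This verifies \eqref{moduin}. The only delicate point is the conformal-invariance computation giving $\hat h \in \W^{1,n}$ with doubled energy; after that, the estimate follows from the interior Morrey-type inequality together with absolute continuity of the integral to extract the vanishing factor $\varepsilon(r)$.
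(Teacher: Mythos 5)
Your approach---extend $h$ across $\partial\BB$ by a conformal reflection and then apply the interior modulus-of-continuity estimate \eqref{eq:modcont}---is genuinely different from the paper's. The paper works directly on the boundary: it applies the Sobolev embedding of Lemma~\ref{Morryonsp} on the spherical caps $\mathcal S_t = \partial B(o',t)\cap\overline\BB$, invokes Lebesgue monotonicity of $h$ and the geometry of $\overline{\BB_u}$ to replace the oscillation on $\overline{\mathcal B(o',\tau)}$ by the oscillation on the cap $\mathcal S_t$, and then integrates over $t\in[\tau,\sqrt\tau]$. Your reflection trick is arguably cleaner in that it avoids the cap geometry entirely and trades everything for a single known interior inequality; the gluing $\hat h\in\W^{1,n}(B(0,2))$ and the energy bookkeeping (conformal invariance, bi-Lipschitz localization of $\sigma$ near $o'$) are correct.

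There is, however, a real gap in the central step. The estimate \eqref{eq:modcont} is stated in the paper for \emph{homeomorphisms} $h\in\W^{1,n}$; the reflected map $\hat h$ is manifestly not a homeomorphism---it folds $B(0,2)$ onto $\overline\BB$ before applying $h$. You cannot cite \eqref{eq:modcont} for $\hat h$ as is. What makes the estimate go through is not injectivity but monotonicity in the sense of Lebesgue (Definition~\ref{def:monoleb}): the proof of \eqref{eq:modcont} rests on $\osc(g,\overline{B(z,r)})\le\osc(g,\partial B(z,t))$ for $r\le t$, plus the Sobolev embedding on spheres. The paper itself remarks, after Definition~\ref{def:monoleb}, that folding maps are precisely the standard example of continuous non-monotone mappings that \emph{are} monotone in the sense of Lebesgue; and indeed one can verify that $\hat h = h\circ F$ (with $F$ the folding of $B(0,2)$ onto $\overline\BB$) inherits Lebesgue monotonicity from the homeomorphism $h$, because $F$ is an open map of $B(0,2)$ onto $\overline\BB$ with $\partial_{\overline\BB}F(B)\subset F(\partial B)$ for balls $B$. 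You need to spell this out and then state that \eqref{eq:modcont} holds for $\W^{1,n}$ maps that are Lebesgue-monotone. Without that remark, the key inequality in your argument is applied outside its stated hypotheses, and the proof as written does not stand on its own.
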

\begin{proof}
Set
\begin{equation}
\mathcal S_t\bydef \partial B(o',t)\cap\overline{B}(0,1),\nonumber
\end{equation}
and
\begin{equation}
\osc (h, \mathcal S_t) \bydef \max_{x'_t,y'_t\in\mathcal S_t}|h(x'_t)-h(y'_t)|.\nonumber
\end{equation}
Since $h \colon \overline{\BB} \onto \overline{\BB_u}$ is continuous and belongs to the Sobolev class $\W^{1,n} (\BB, \R^n)$ applying a slightly modified version of Sobolev embedding on sphere, Lemma~\ref{Morryonsp} for almost every $0<t<1$ we have
\begin{equation}\label{equation1}
\lf( \osc (h, \mathcal S_t) \r)^n\le Ct\int_{\mathcal S_t}|Dh(x)|^n\, \dtext x.
\end{equation}
Here $C$ is a positive constant, independent of $t$. Fix $x'\in \BB$ such that $\tau\bydef |x'-o'|<1$. We write
\[\mathcal B (o', t) \bydef \BB\cap \BB(o',t) \quad \textnormal{for } 0<t<1 \, . \]
Choose $t\in [{\tau } ,\sqrt{\tau}]$. Then
\[\osc (h, \overline{\mathcal B (o', {\tau})}  ) \le \osc (h, \overline{\mathcal B (o', t )}) \le \osc (h, \partial \overline{\mathcal B (o', t )}) \]
where the latter inequality follows from the  fact that $h$ is monotone in the sense of Lebesgue. By the geometry of $\overline{\BB_u}$, we have
\[ \osc (h, \partial \overline{\mathcal B (o', t )}) = \osc (h,  {\mathcal S_t)})  \]
Combining this with~\eqref{equation1} for almost every $ t \in [\tau, \sqrt{\tau}] $ we have
\[  \frac{ \lf( \osc (h,      \overline{\mathcal B (o', {\tau})}        \r)^n }{t} \le C \,   \int_{\mathcal S_t}|Dh(x)|^n\, \dtext x \, .    \]
Integrating this from $\tau$ to $\sqrt{\tau}$ with respect to the variable $t$, the claimed inequality~\eqref{moduin} follows with
 \begin{equation}
\varepsilon (\tau ) =C\cdot \left(\int_{ \mathcal B(o', \sqrt{\tau} )}|Dh(x)|^ndx\r)^{\frac{1}{n}}\, ,   \qquad \tau = |x'-o'| \, .
\end{equation}

\end{proof}

\section{Proof of Theorem \ref{thm1.2}}
Theorem~\ref{thm1.2} is known among the experts in the field and  easily follows  combining a few results in the literature.  We mainly provide a proof for the convenience of the reader.

\begin{proof}
First, we assume that $K_I(\cdot, h)\in \cl^{\, 1}(\X)$. Then, Theorem in~\cite{AIMO} states that a homeomorphism $h \in \W^{1,n} (\X, \R^n)$ satisfies the claimed identity~\eqref{eq:identity} if  $h$ has a finite (outer) distortion; that is, there is a function $1\le K_{_O} (x) < \infty$ such that
\begin{equation}\label{eq:outer}\abs{Dh(x)}^n \le K_{_O} (x) \, J_h (x) \qquad \textnormal{for almost every } x \in \X \, . \end{equation}
The proof, however, only uses a consequence of~\eqref{eq:outer} the finite inner inequality~\eqref{eq:innerdistortion}, see \cite[(9.10)]{AIMO}.

Second, we assume that $h \in \W^{1,n} (\X, \R^n)$ and $f=h^{-1}\in \W^{1,n} (\Y, \R^n)$. Then
\begin{equation}\label{eq:idenk_i}
K_{_I} (x,h) = \abs{Df \big( h(x)\big)}^n J_h(x) \quad \textnormal{ a.e. } x \in \X  \, .
\end{equation}
Indeed, by  Lemma~\ref{W1n} both $h $ and $f$ are differentiable almost everywhere. Now, the identity $(f \circ h) (x)=x$, after differentiation,  implies that
\begin{equation}\label{eq:blah10} Df (h(x)) Dh(x)= {\bf I} \quad \textnormal{ a.e. in } \X.
\end{equation}
Since both $h$ and $f$ satisfy Lusin's condition $(N)$; that is,  preserve sets of zero measure, see Lemma~\ref{W1n}. This shows that $J_h(x)>0$ and $J_f(y)>0$ almost everywhere again we used the fact that $h$ satisfies Lusin's condition $(N)$. Now, the formula~\eqref{eq:idenk_i} is a direct consequence of the definition of the inner distortion, Gramer's rule $Dh (x) D^\sharp h(x) = J_h (x) {\bf I}$ and~\eqref{eq:blah10}
\[K_{_I} (x,h) = \frac{|D^\sharp h(x)|^n}{|J_h(x)|^{n-1}} = \abs{(Dh (x))^{-1}}^n J_h (x) = \abs{Df \big( h(x)\big)}^n J_h(x)\, .  \]
Now the change of variables formula~\eqref{eq:cvf} gives
\[\int_{\X} K_{_I} (x,h) \, \dtext x = \int_{\Y} \abs{Df(y)}^n \, \dtext y \, .  \]
\end{proof}

\begin{proof}[Proof of Corollary \ref{quasiconformal}]
By~\cite[\S 6.4]{IMb}  for every $x\in\X$ with $J_h(x)>0$, we have
\begin{equation}\label{eq:disto}
K_{_I}^{\frac{1}{n-1}}(x, h)\le K_{_O}(x, h)\le K_{_I}^{n-1}(x, h)\, .
\end{equation}
Here $ K_{_O}(x, h)$ stands for the smallest function satisfying~\eqref{eq:outer}. Now, Corollary \ref{quasiconformal} follows immediately from~\eqref{eq:disto}.
\end{proof}

\section{Proof of Theorem \ref{global}}
\begin{proof}[Proof of Theorem \ref{global}]
By Lemma~\ref{extend} a homeomorphism $h \colon \overline{\mathbb B} \to \overline{\mathbb B_u}$ with finite conformal energy extends as a continuous mapping $h \colon \overline\BB\to\overline{\BB_u}$.  Since $h(\overline{\mathbb B} )$ is a compact subset of $\overline{\mathbb B_u}$, it follows that $h \colon  \overline\BB\onto\overline{\BB_u}$.
Furthermore, by Lemma~\ref{lem:mono} the boundary map $h \colon \partial \BB\onto  \partial {\BB_u} $ is monotone.

Now, we need to show that the boundary mapping is injective. We again use  the notation $o= (0,0,...,0)$ and $o' = (1,0,...,0)$ and assume, without loss of generality, that $h(o')=o$. First, $h^{-1} (o) =o'$ by Lemma~\ref{single}. Second let $y \in \partial \BB_u \setminus \{o\}$.  Choosing  $0<r_y< |y-o|$, then $\BB(y,r_y)\cap{\BB_u}$ is locally quasiconformally flat. By Lemma~\ref{extend}, the homeomorphism $f\colon \BB(y,r_y)\cap {\BB_u} \onto f( \BB(y,r_y)\cap {\BB_u} )$ has a  continuous extension $f\colon \overline{\BB(y,r_y)\cap {\BB_u} } \onto \overline{f( \BB(y,r_y)\cap {\BB_u} )}$.  Therefore, $h^{-1}(y)=f(y)$ is a single point. Now  we know that $h\colon \overline\BB\onto\overline{\BB_u}$ is a continuous bijection, and therefore it is a homeomorphism.
\end{proof}

\section{Construction of Example~\ref{ex:slit}}
Here we show that there exists a homeomorphism from $\BB \setminus {\bf I}$ onto $\BB$ with finite conformal energy actually Lipschitz continuous whose inverse lies in $\W^{1,p} (\BB, \R^n)$ for every $p<n$. To simplify our construction, we may and do replace $\mathbb B$  by a bi-Lipschitz equivalent domain; namely,
\[\Y = \{  (s,y)\in \R \times \R^{n-1} \colon  \abs{y} < 1 \textnormal{ and } -1<s< \abs{y} \}\]
As for the reference configuration we replace $\mathbb B \setminus {\bf}$ by cylinder ${\bf C}=(-1,1) \times \mathbb B^{n-1}$ with the line segment {\bf I} removed from it. Consider the Lipschitz homeomorphism  $h \colon {\bf C} \setminus {\bf I} \onto \mathbb Y$ defined by the rule
\begin{equation}
h(t,x) =  \begin{cases} (t \abs{x} ,\,x) \quad & \textnormal{ for } t>0 \\
(t,x)&  \textnormal{ for } t<0 \, . \end{cases}
\end{equation}
Its inverse mapping $f \colon \mathbb Y \onto  {\bf C} \setminus {\bf I}  $ takes the form
\begin{equation}
f(s,y) =  \begin{cases} \left( \frac{s}{\abs{y}}  , \, y\right)  \quad & \textnormal{ for } s\ge 0 \\
(s,y)&  \textnormal{ for } s<0 \, . \end{cases}
\end{equation}
It is easy to see that 
\[\abs{Df(s,y)} \le \frac{C_n}{\abs{y}} \]
Therefore,
\[\int_\Y \abs{Df}^p < \infty \qquad \textnormal{ for every } 1\le p<n  \]
as desired.

\section{Proof of Theorem \ref{thm:cut}}
\subsection{The nonexistence part of Theorem \ref{thm:cut}} First, we will prove the nonexistence part of Theorem \ref{thm:cut}.
\begin{thm}\label{nonexistence}
If $p>n-1$, then there is no homeomorphism $h \colon \BB\onto\BB\setminus\bf I$ with $h\in \W^{1,n}(\BB,\BB\setminus\bf I)$ whose inverse $f=h^{-1}\in \W^{1,p}(\BB\setminus\bf I,\BB)$.
\end{thm}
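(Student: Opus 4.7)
My plan is to extend $h$ to a continuous bijection between $\overline{\BB}$ and $\overline{\BB\setminus\mathbf{I}}$, identify the codomain with $\overline{\BB}$ (since the slit has codimension at least $2$), and then invoke invariance of boundary to contradict the fact that $h(\BB)=\BB\setminus\mathbf{I}\subsetneq\BB$.

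First, I would extend $h$ continuously to $\bar{h}\colon\overline{\BB}\to\overline{\BB\setminus\mathbf{I}}$ using Lemma~\ref{extend}. The hypotheses are satisfied: $\partial\BB$ is locally quasiconformally flat, and $\partial(\BB\setminus\mathbf{I})=\partial\BB\cup\mathbf{I}$ is a neighborhood retract in $\R^n$ for $n\geq 3$ (one combines the radial retraction onto $\partial\BB$ with the orthogonal projection onto $\mathbf{I}$, patched near the meeting point $(1,0)$).

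The main step---and the principal obstacle---will be to show that $\bar{h}^{-1}(y_0)$ is a single point for every $y_0\in\partial(\BB\setminus\mathbf{I})$. The delicate case is $y_0\in\mathbf{I}$ (including the inward tip $o$). Because $n\geq 3$, for small $r>0$ the punctured sphere $\partial B_r(y_0)\cap(\BB\setminus\mathbf{I})$, obtained by removing the one or two points where $\partial B_r(y_0)$ meets $\mathbf{I}$, remains connected. Applying Lemma~\ref{Morryonsp} to $f\in\W^{1,p}$ with $p>n-1$ yields, for almost every such $r$,
\[
\osc\!\bigl(f,\,\partial B_r(y_0)\cap(\BB\setminus\mathbf{I})\bigr)\;\leq\; C\,r^{1-\frac{n-1}{p}}\Bigl(\int_{\partial B_r(y_0)}|Df|^p\,dS\Bigr)^{1/p}.
\]
Since $1-(n-1)/p>0$ and the sphere integrals are locally integrable in $r$ by Fubini, a sequence $r_k\to 0$ exists along which the right-hand side tends to zero. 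Given any two points $q,q'\in\bar{h}^{-1}(y_0)\subset\partial\BB$, for each $k$ one selects $q_k,q_k'\in\BB$ near $q,q'$ with $h(q_k),h(q_k')\in\partial B_{r_k}(y_0)\cap(\BB\setminus\mathbf{I})$; then $|q_k-q_k'|=|f(h(q_k))-f(h(q_k'))|$ is bounded by the shrinking oscillation, forcing $q=q'$. This specializes Lemma~\ref{Morry} to the limiting situation $C_t=\{(t,0)\}$, in which $|x_t-y_t|=0$ forces $|x_t'-y_t'|=0$. For $y_0\in\partial\BB\setminus\overline{\mathbf{I}}$ a local application of Lemma~\ref{extend} (the target boundary being quasiconformally flat there) yields single-point preimages, and at $y_0=(1,0)$ the same sphere argument applies to the spherical cap $\partial B_r((1,0))\cap\BB$ with the slit point $(1-r,0)$ removed.

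Once this is established, $\bar{h}$ is a continuous bijection between compact Hausdorff spaces, hence a homeomorphism. Since $\mathbf{I}$ has codimension at least two, $\overline{\BB\setminus\mathbf{I}}=\overline{\BB}$ as subspaces of $\R^n$, so $\bar{h}$ becomes a self-homeomorphism of the closed $n$-ball $\overline{\BB}$. Invariance of boundary for topological manifolds with boundary then forces $\bar{h}(\BB)=\BB$; but $\bar{h}|_\BB=h$ has image $\BB\setminus\mathbf{I}\subsetneq\BB$, which is the desired contradiction. The hypothesis $p>n-1$ enters essentially because Lemma~\ref{extend} fails to apply to $f$ (the boundary $\partial(\BB\setminus\mathbf{I})$ is not quasiconformally flat along the slit), and $p>n-1$ is the borderline exponent for the Morrey-type continuity on $(n-1)$-spheres that collapses the preimage of each slit point.
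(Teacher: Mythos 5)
Your endgame (bijective continuous extension of $h$ to $\overline{\BB}$, then invariance of boundary) would indeed give the contradiction, but the crucial intermediate step --- that $\bar h^{-1}(y_0)$ is a singleton for every $y_0\in\mathbf I$ --- is not established by the argument you give. You bound
\[
\osc\bigl(f,\,\partial B_r(y_0)\cap(\BB\setminus\mathbf I)\bigr)\;\le\;C\, r^{1-\frac{n-1}{p}}\Bigl(\int_{\partial B_r(y_0)}|Df|^p\,dS\Bigr)^{1/p}
\]
and assert that, because the exponent $1-\tfrac{n-1}{p}$ is positive and $g(r):=\int_{\partial B_r(y_0)}|Df|^p\,dS$ is integrable in $r$ near $0$ (Fubini), the right-hand side tends to $0$ along a sequence $r_k\to0$. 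That is false precisely in the interesting range $n-1<p<n$: raising the bound to the power $p$, you need $\liminf_{r\to 0}\,r^{\,p-n+1}g(r)=0$, but $r^{\,p-n+1}g(r)\equiv 1$ is perfectly compatible with $g\in \cl^1(0,\delta)$ whenever $0<p-n+1<1$ (take $g(r)=r^{\,n-1-p}$, which is integrable for $p<n$). In fact $|Df|(y)\sim|y-y_0|^{-1}$ near $y_0$ produces exactly this behaviour while staying in $\cl^p$ for $p<n$. So the Fubini argument only delivers $\liminf_{r\to0} r\,g(r)=0$, which suffices for $p\ge n$ but not below; and $p\ge n$ is the range already covered by the bi-conformal statement Theorem~\ref{thm:slitbiconf}, so the argument breaks down precisely where the theorem has content.

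The paper's proof avoids estimating any oscillation at all. It works with spheres $\partial B_t$ concentric at the tip $0$ of the slit, so that $S_t=\partial B_t\setminus\{x_t\}$ meets $\mathbf I$ at a \emph{single} point $x_t=(t,0,\dots,0)$. For a.e.\ $t$, $f|_{S_t}\in\W^{1,p}(S_t)$ with $p>n-1$, so $f$ extends continuously across the one puncture (Lemma~\ref{lem:holder} / removability) --- no smallness of the sphere integral is required, only its finiteness. Jordan--Brouwer gives a topological sphere $f(\overline{S_t})$ whose interior $U_t=h^{-1}(B_t\setminus\mathbf I)$ satisfies $\overline{U_t}\cap\partial\BB=\{x'_t\}$, and nesting $U_t\subset U_s$ forces $x'_t=x'_s$, contradicting $h(x'_t)=x_t\neq x_s=h(x'_s)$. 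This is strictly weaker information than your full injectivity claim (it contradicts injectivity of $h$ only along the slit-touching points of a one-parameter family of topological spheres), which is exactly why it can be obtained with the bare Fubini statement. If you want to save your approach, you would need a genuinely different mechanism to show the preimage of each slit point degenerates; the sphere-oscillation estimate around $y_0$ alone will not do it for $n-1<p<n$.
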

\begin{proof}
Suppose to the contrary that there is a homeomorphism $h \colon \BB\onto \BB\setminus\bf I$ in the Sobolev class $\W^{1,n}(\BB,\BB\setminus\bf I)$ such that  $f\in \W^{1,p}(\BB\setminus\bf I,\BB)$. Since $\D(\BB\setminus\bf I)$ is a neighborhood retract, Lemma~\ref{extend} tells us that the homeomorphism $h \colon \BB\onto\BB\setminus\bf I$  extends as  a continuous  mapping $h \colon \overline\BB\onto\overline{\BB }$. We denote
\[S_t = \partial B_t \setminus \{x_t\}   \qquad \textnormal{where } x_t \bydef  (t, 0, \dots , 0) \, .  \]
 Here $B_t=B(0,t)$.  Fubini's theorem implies that for almost every $t\in(0, 1)$, $f\big|_{S_t }\in\W^{1,p}(S_t , \R^n)$. Since $p>n-1$ and $n \ge 3$, the possible singularity of $f$ at $x_t$ is removable. For such  $t$, applying  Lemma~\ref{lem:holder},  $f\big|_{S_t }$ extends as a homeomorphism $f \colon \overline{S_t} \onto f(\overline{S_t})$. Write $x'_t= f(x_t)$. Now, Jordan-Brouwer Separation Theorem, Lemma~\ref{lem:jbs}, tells us that  $\R^n \setminus f(\overline{S_t})$ consists of two disjoint connected open sets whose common boundary is $f(\overline{S_t})$.  Let us denote the bounded one by $U_t$. Note that $U_t \subset \BB$ and $\overline{U_t} \cap \partial \BB =\{x'_t\} $.  Since for almost every $t<s\in (0,1)$ we have $B_t \setminus {\bf I} \subset B_s \setminus {\bf I} $ then $U_t=h^{-1} (B_t \setminus {\bf I}) \subset h^{-1} (B_s \setminus {\bf I})=U_s$. Now comes an elementary topological fact; given two domains $U \subset V \subset \mathbb B$ such that $\overline{U} \cap \partial \mathbb B = \{x_\nu\}$ and  $\overline{V} \cap \partial \mathbb B = \{x_\mu\}$, then $x_\nu = x_\mu$.
  
Now, we have $x_s'=x'_t$. This, however, is impossible since  $h(x_s')=(s,0, \dots , 0)$ and $h(x_t')=(t,0, \dots , 0)$.

\end{proof}

\subsection{The existence part of Theorem \ref{thm:cut}}
Here we verify  the existence part of Theorem \ref{thm:cut}. Namely,
\begin{thm}\label{thm:62}
There exists a Lipschitz homeomorphism $h \colon \BB\rightarrow\BB\setminus\bf I$ whose  inverse $f\in \W^{1,p}(\BB\setminus\bf I,\BB)$ for every $1\le p<n-1$.
\end{thm}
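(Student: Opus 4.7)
The plan is to construct $h$ explicitly after replacing $\mathbb B$ and $\mathbb B \setminus {\bf I}$ by bi-Lipschitz equivalent model domains, as was done for the opposite direction in Example~\ref{ex:slit}. I replace $\mathbb B \setminus {\bf I}$ by the slit cylinder ${\bf C} \setminus {\bf I}$, where ${\bf C} = (-1,1) \times \mathbb B^{n-1}$ and ${\bf I} = [0,1) \times \{0\}$, and I replace $\mathbb B$ by the cone-wedge $\Y = \{(s, y) \colon |y| < 1,\ -1 < s < |y|\}$ from Example~\ref{ex:slit}. Each substitution is a bi-Lipschitz equivalence of bounded Lipschitz domains (the slit can be matched on the target side directly), so it suffices to exhibit a Lipschitz homeomorphism $\Phi \colon \Y \onto {\bf C} \setminus {\bf I}$ whose inverse lies in $\W^{1,p}$ for every $1 \le p < n - 1$.

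On the lower half $\{s \le 0\}$ the two model domains coincide with $(-1, 0] \times \mathbb B^{n-1}$ and I set $\Phi = \mathrm{id}$. On the upper wedge $\{0 < s < |y| < 1\}$ of $\Y$, the slicewise radial formula
\[
\Phi(s,y) \;=\; \Bigl(s,\;\frac{|y| - s}{1 - s}\cdot\frac{y}{|y|}\Bigr)
\]
sends the slant face $|y| = s$ to the slit, fixes the outer wall $|y| = 1$, and is Lipschitz away from a neighborhood of the slit tip $s = 1$. The main obstacle is the factor $(1 - s)^{-2}$ appearing in $\partial_s\!\bigl((|y| - s)/(1 - s)\bigr)$ as $s \to 1$, so I must replace the naive radial formula near the tip by a Lipschitz building block.

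In coordinates $(u, v, \omega) = (1 - s,\ 1 - |y|,\ y/|y|)$ a small neighborhood of the top corner of $\Y$ becomes the product of the $45^\circ$ wedge $\{0 < v < u < \delta\}$ with $S^{n-2}$. The planar squaring $(u, v) \mapsto (u^2 - v^2,\ 2uv)$, times the identity in the $\omega$-factor, opens this wedge Lipschitzly onto the quadrant $\{u' > 0,\ v' > 0\}$, absorbing the $(1 - s)^{-2}$ singularity and sending the edge $\{v' = 0\}$ onto the slit; the quadrant is in turn bi-Lipschitz equivalent to a neighborhood of the tip $(1, 0, \dots, 0) \in \overline{{\bf C}} \setminus {\bf I}$. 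Splicing this tip model with the radial formula across a transition collar via a Lipschitz partition-of-unity construction that preserves injectivity yields the desired Lipschitz homeomorphism $\Phi$.

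Finally, on the radial piece the inverse is $\Phi^{-1}(s, y') = \bigl(s,\ (s + (1 - s)|y'|)\cdot y'/|y'|\bigr)$; a direct differentiation gives $|D\Phi^{-1}(s, y')| \le C/|y'|$ in a tubular neighborhood of ${\bf I}$, and the corner model satisfies the same bound. Using cylindrical coordinates $(t, \rho, \omega) \in (0, 1) \times (0, r_0) \times S^{n-2}$ about the slit, with volume element $\rho^{n-2}\, d\rho\, dt\, d\omega$,
\[
\int_{{\bf C} \setminus {\bf I}} |Df|^{p} \;\le\; C \int_0^1\!\!\int_0^{r_0} \rho^{\,n - 2 - p}\, d\rho\, dt \;+\; (\text{bounded tail}),
\]
which is finite precisely when $n - 2 - p > -1$, that is, $p < n - 1$. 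Therefore $f = h^{-1} \in \W^{1,p}(\mathbb B \setminus {\bf I}, \mathbb R^n)$ for every $1 \le p < n - 1$, as desired.
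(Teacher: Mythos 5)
Your overall strategy -- replace the ball and the slit ball by bi-Lipschitz models and write an explicit radial formula -- matches the paper's. But your choice of model forces you into a genuine gap, which the paper avoids entirely by a better choice of model.

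You replace $\mathbb B$ by the wedge $\Y=\{(s,y)\colon |y|<1,\ -1<s<|y|\}$, whose inner cone $\{|y|=s\}$ has slope $1$ and meets the outer cylinder $\{|y|=1\}$ at the tip $s=1$. As you correctly observe, the slicewise-radial map $\Phi(s,y)=\bigl(s,\ \frac{|y|-s}{1-s}\,\frac{y}{|y|}\bigr)$ has $|D\Phi|\asymp\frac{1}{1-s}$, so it fails to be Lipschitz near the tip. The paper instead models $\mathbb B$ by $\X_+=\{(t,x)\colon 0\le t<1,\ \frac{t}{2}<|x|<1\}$, whose inner cone has slope $\frac12$ and stays a uniform distance from the outer wall. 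With this model the analogous radial formula $h(t,x)=\bigl(t,\ [\frac{2|x|}{2-t}-\frac{t}{2-t}]\frac{x}{|x|}\bigr)$ has denominator $2-t\ge 1$, hence is Lipschitz globally and needs no local patch. The $|Df|\lesssim 1+\frac{s}{|y|}\le C/|y|$ estimate and the resulting $p<n-1$ integrability are essentially the same in both proofs, and you compute them correctly.

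The gap in your write-up is the splicing step. You propose to replace the radial formula near the tip by a squaring model $(u,v)\mapsto(u^2-v^2,\,2uv)\times\mathrm{id}_{S^{n-2}}$ and to glue the two pieces "via a Lipschitz partition-of-unity construction that preserves injectivity." That last phrase does the heavy lifting but has no argument behind it: a pointwise convex combination of two homeomorphisms is, in general, neither injective nor even locally injective, even when both pieces are Lipschitz and orientation-preserving, so you cannot simply blend them with cutoffs. To make this rigorous you would need either to post-compose the squaring model by an explicit bi-Lipschitz normalization so that the two formulas agree on a whole transition collar, or to interpolate along a foliation with a monotonicity argument -- in either case a nontrivial additional construction. (There is also a small orientation slip: with your conventions $u=1-s$, $v=1-|y|$, $u'=1-s'$, $v'=|y'|$, the squaring $(u,v)\mapsto(u^2-v^2,2uv)$ sends the outer face $\{v=0\}$ to $\{v'=0\}$, i.e.\ to the slit, whereas it is the slant face $\{v=u\}$ that should land on the slit; you would want $(u,v)\mapsto(2uv,\,u^2-v^2)$ or a reflection instead.) Once you change the inner cone slope from $1$ to $\frac12$, both the splicing problem and the orientation issue disappear, and the single explicit radial formula suffices.
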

\begin{proof}
We shall view $\R^n$ as 
\[\R^n = \R \times \R^{n-1}= \{(t,x) \colon  t\in \R \, , \; x \in \R^{n-1}\}  \, . \]
To simplify our construction, we may and do replace $\mathbb B$ by a bi-Lipschitz equivalent domain; namely $\X=\X_-\cup \X_+$, where
\[\X_- = \{ (t,x) \colon -1<t<0 \textnormal{ and } \abs{x}<1  \}\]
and
\[\X_+ = \{ (t,x) \colon 0\le t<1 \textnormal{ and } \frac{t}{2} < \abs{x}<1  \} \, . \]

As for the reference configuration we consider  $\Y = \Y_+ \cup \Y_-$ where $\Y_-$ is the open unit cylinder
\[\Y_- = \{ (s,y) \colon -1<s<0 \textnormal{ and } \abs{y}<1 \}\]
and
\[ \Y_+= \{(s,y) \colon 0 \le s <1 \textnormal{ and } 0 < \abs{y}<1\}  \, . \]

\begin{figure}[h]
\centering
\includegraphics[width=12.7cm]{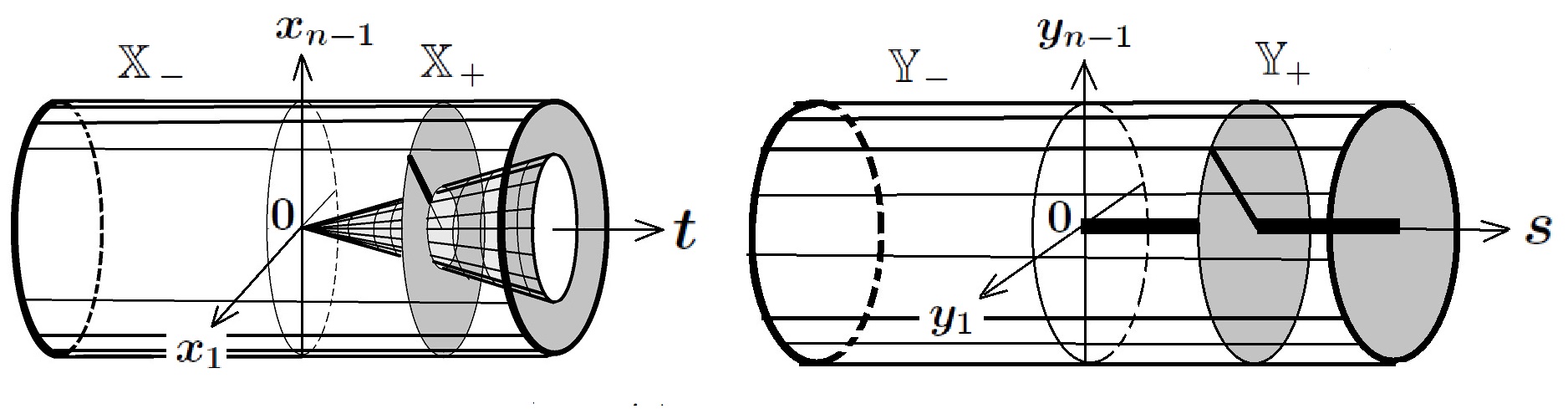}
\caption{The domains $\X$ and $\Y$.}
\label{fig:ex}
\end{figure}

We define a Lipschitz map $h \colon \X \onto \Y$ by the rule
\[
h(t,x)= \begin{cases} (t,x) & \textnormal{in } \X_- \\
\left(t, \left[ \frac{2\abs{x}}{2-t}- \frac{t}{2-t}\right] \frac{x}{\abs{x}} \right) \quad & \textnormal{in } \X_+ \end{cases}
\]
Then the inverse map $f=h^{-1} \colon \Y \onto \X$ takes the form
\[
f(s,y)= \begin{cases} (s,y) & \textnormal{in } \Y_- \\
\left(s, \left[ \frac{2-s}{2} \abs{y}+ \frac{s}{2}\right] \frac{y}{\abs{y}} \right) \quad & \textnormal{in } \Y_+ \end{cases}
\]
It is the identity map on $\Y_-$ while on $\Y_+$ we write it as
\[f(s,y)= \left(s, \frac{2-s}{2}y\right)  + \left(0, \, \frac{sy}{2\abs{y}}\right)\]
where the first term is $\mathscr C^\infty$-smooth. It is now easy to verify the estimate
\[  \abs{Df(s,y)}  \le C \cdot \left(1+ \frac{s}{\abs{y}} \right)\]
where $\abs{s}<1$ and $y \in \R^{n-1}$, $0< \abs{y}<1$. Hence
\[\int_{\Y_+} \abs{Df}^p<\infty \quad \textnormal{for every } 1 \le p < n-1\]
as desired.
\end{proof}

\section{Proof of Theorem \ref{thm:cusp}}
\subsection{The nonexistence part of Theorem \ref{thm:cusp}} Here we give a proof of the nonexistence part of Theorem \ref{thm:cusp}. Recall the statement  for the convenience of the reader.
\begin{thm}\label{nonexistence}
Let $\alpha\ge n$ and $p>n-1$ be fixed and $u(t)=\frac{e}{\exp\lf(\frac{1}{t}\r)^{\alpha}}$. Then there does not exists a homeomorphism $h:\BB\to\BB_u$ with $h\in\W^{1,n}(\BB,\BB_u)$ and $h^{-1}\in\W^{1,p}(\BB_u,\BB)$.
\end{thm}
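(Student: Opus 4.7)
The plan is to derive a contradiction by pitting the modulus of continuity of $h$ at the preimage of the cusp tip against the Sobolev trace bound on spherical slices for $f$. First I would invoke Lemma~\ref{single}: since $h\in\W^{1,n}(\BB,\BB_u)$ and $f\in\W^{1,p}(\BB_u,\BB)$ with $p>n-1$, its hypotheses hold, so $h^{-1}(o)$ is a single boundary point, which after a rotation I may take to be $o'=(1,0,\dots,0)$. The boundary $\D\BB_u$ is a topological $(n-1)$-sphere (the cusp pinches to the single point $o$) and hence a neighborhood retract, so Lemma~\ref{extend} extends $h$ continuously to $\overline{\BB}$ with $h(o')=o$. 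Consequently Lemma~\ref{modulus} furnishes an increasing $\varepsilon\colon[0,1)\to[0,\fz)$ with $\lim_{t\to 0+}\varepsilon(t)=0$ and
\[
|h(x')|\le \frac{\varepsilon(|x'-o'|)}{\log^{1/n}(1/|x'-o'|)}\qquad \textnormal{for }0<|x'-o'|<1.
\]

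Next I would select the decreasing sequence $t_i\to 0$ constructed just before Lemma~\ref{lem:twocomp}, satisfying $\int_{S_{t_i}}|Df|^p<1/t_i$. Near the cusp tip the set $C_{t_i}$ lies on the cusp surface and is essentially an $(n-2)$-sphere of radius $\sim u(t_i)$, so for $i$ large $\diam C_{t_i}\le 3\,u(t_i)$. Setting $q\bydef 1-(n-1)/p>0$ and $s_i\bydef C\,u(t_i)^{q}\,t_i^{-1/p}$, Lemma~\ref{Morry} then gives $\diam C'_{t_i}\le s_i$. By Lemma~\ref{lem:twocomp}, $C'_{t_i}$ splits $\D\BB$ into two complementary caps; let $U_{t_i}$ be the smaller one. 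The argument in the proof of Lemma~\ref{single} shows $o'\in U_{t_i}$ and $\diam U_{t_i}\to 0$; since a topological $(n-2)$-sphere of sufficiently small diameter in $\D\BB$ bounds a cap of comparable diameter, one gets $\diam U_{t_i}\le C'\,s_i$. In particular every $x'\in C'_{t_i}$ satisfies $|x'-o'|\le C'\,s_i$.

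Finally, Lemma~\ref{lem:27} gives $h(C'_{t_i})=C_{t_i}$ and hence $|h(x')|=t_i$ for $x'\in C'_{t_i}$. Combining this with the modulus of continuity and the monotonicity of both $\varepsilon$ and $\log^{1/n}(1/\cdot)$ gives
\[
t_i\le \frac{\varepsilon(C's_i)}{\log^{1/n}(1/(C's_i))}.
\]
A direct computation with $u(t_i)=e\,\exp(-t_i^{-\alpha})$ shows $\log(1/s_i)\ge (q/2)\,t_i^{-\alpha}$ for small $t_i$ (the lower-order term $(1/p)\log t_i$ is dominated by the polynomial-in-$1/t_i$ part), so the inequality rearranges to
\[
t_i^{\,1-\alpha/n}\le C''\,\varepsilon(C's_i).
\]
For $\alpha\ge n$ the left side is $\ge 1$ for every $t_i\in(0,1)$, while the right side tends to $0$ as $i\to\fz$ because $s_i\to 0$. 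This is the required contradiction.

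The principal obstacle I anticipate is making the spherical-geometry claim $\diam U_{t_i}\lesssim \diam C'_{t_i}$ rigorous: intuitively, a small topological $(n-2)$-sphere in $\D\BB$ bounds a small spherical cap, but a clean justification (say by showing that the smaller complementary component is contained in a geodesic ball of radius comparable to $\diam C'_{t_i}$ centered at any of its points, together with the information $o'\in U_{t_i}$) is where most of the technical care is needed; once it is in hand, the remaining chain of estimates is essentially a calculation in the explicit form of $u$.
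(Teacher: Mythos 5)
Your argument is correct and follows the paper's own proof in all essentials: extend $h$ to the boundary, use the Sobolev embedding on the slices $S_{t_i}$ chosen by Fubini to bound $\diam C'_{t_i}$, invoke the modulus-of-continuity estimate from Lemma~\ref{modulus} at $o'$, and combine with the explicit form of $u$ to reach a contradiction when $\alpha\ge n$. The geometric step you flag as the principal obstacle is exactly what the paper handles (a bit tersely) via the elementary fact that for an open connected $U$ lying in a half-sphere one has $\diam \overline{U}\le \diam\partial U$, so your worry is legitimate but admits the same one-line resolution.
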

\begin{proof}
Fix  $\alpha\ge n$ and $p>n-1$. Suppose to the contrary that there exists a homeomorphism $h \colon \BB \onto \BB_u$  with finite conformal energy such that  its inverse $f\in W^{1,p}(\BB_u, \R^n)$. According to  Lemma \ref{extend}, $h$  extends as  a continuous  mapping $h \colon \overline{\BB}\onto \overline{\BB_u}$. Furthermore, by Lemma~\ref{lem:mono} the boundary mapping $h \colon \partial \BB \onto \partial \BB_u$ is monotone.

We follow the notation introduced in Section~\ref{Section2} and set $o= (0,0,...,0)$ and $o' = (1,0,...,0)$. We may and do assume that $h(o')=o$. Moreover, for every $0<t<1$, 
\begin{equation}
S_t = \{x\in\BB_u \colon |x|=t\}\ \ {\rm and }\ \ C_t =  \{x\in\partial\BB_u \colon |x|=t\}.\nonumber
\end{equation}
and
\[  S'_t= h^{-1}(S_t) \quad \textnormal{and} \quad C'_t = \overline{S'_t}\cap\partial \BB \, . \]
Lemma~\ref{lem:twocomp} tells us that $C'_t$ divides $\partial \BB$ into two disjoint components. We denote the component which  contains $o'$ by $U'_t$. Accordingly, we also have
\begin{equation}\label{eq:apuo}
\partial U'_t = C'_t \, .
\end{equation}
 Since
\begin{equation}
\int_{\BB_u}|Df(x)|^pdx<\fz,\nonumber
\end{equation}
 there exists a decreasing sequence $\{t_i\}$, which converges to $0$ and satisfies
\begin{equation}\label{1/t}
\int_{S_{t_i}}|Df(x)|^pdx<\frac{1}{t_i} \, .
\end{equation}
Indeed, by Fubini's theorem we have 
\[\int_0^1 \int_{S_t} \abs{Df(x)}^p \, \dtext x < \infty \, , \quad \textnormal{hence } \liminf_{t\to 0} t \int_{S_t} \abs{Df(x)}^p =0\]

Now, by Lemma~\ref{lem:27} we have $h(C_t')=C_t$. Combining this with  Lemma \ref{Morry}  for every $x'_{t_i}, y'_{t_i} \in C'_{t_i}$ we have
 \begin{eqnarray}\label{equa1}
\diam C'_{t_i} &\le& C\cdot \abs{x_{t_i}-y_{t_i}}^{1-\frac{n-1}{p}}\lf(\int_{S_{t_i}}|Df(x)|^pdx\r)^{\frac{1}{p}} \nonumber \\
&\le& C\cdot \big(2\, u(t_i)\big)^{1-\frac{n-1}{p}}\lf(\int_{S_{t_i}}|Df(x)|^pdx\r)^{\frac{1}{p}} \\
                     &\le& C\cdot \big (u(t_i)\big)^{1-\frac{n-1}{p}}\lf(\frac{1}{t_i}\r)^{\frac{1}{p}}\nonumber
 \end{eqnarray}
Here $u(t)=\frac{e}{\exp\lf(\frac{1}{t}\r)^\alpha}$. Especially, this shows that $\diam (C'_{t_i}) \to 0$ as $i \to \infty$ and, therefore,  $U'_{t_i}$ lies on the half sphere $\partial \mathbb B_+$. We now appeal to the geometric fact if $x,a\in U$, then $\abs{x-a} \le \diam \partial U$. Now, by~\eqref{eq:apuo} we choose $x'_{t_i} \in C'_{t_i}$ such that
\begin{equation}\label{equa}
|x'_{t_i}-o'|\le \diam C'_{t_i} \, .
\end{equation}
According to  Lemma \ref{modulus} we obtain
 \begin{equation}\label{equa2}
t_i\le \abs{h(x'_{t_i}) -o}\le {\varepsilon (t_i)} \, {\log^{-\frac{1}{n}}\frac{1}{|x'_{t_i}-o'|}},
 \end{equation}
where $\varepsilon (t)$ is a positive function which converges to $0$ as $t$ goes to $0$.
Combining this with~\eqref{equa} we have
\begin{equation}\label{eq:apu5}t_i \le {\varepsilon (t_i)}\, {\log^{-\frac{1}{n}}\frac{1}{\diam C'_{t_i}}} \, . \end{equation}
The estimates~\eqref{equa1} and~\eqref{eq:apu5} imply
\begin{equation}
 C\cdot u(t_i) \ge \lf(\frac{t_i^\frac{1}{p}}{\exp\lf(\frac{\varepsilon (t_i)}{t_i}\r)^n}\r)^{\frac{p}{p+1-n}} \, .
 \end{equation}
Since $\alpha \ge n$ we have $\exp (1/t^n) \le \exp (1/t^\alpha)$ for $0<t\le 1$ and therefore
\[ \frac{C \cdot e} {\exp \left(  {}{t^{-n}} \right)} \ge  \lf(\frac{t_i^\frac{1}{p}}{\exp\lf(\frac{\varepsilon (t_i)}{t_i}\r)^n}\r)^{\frac{p}{p+1-n}} \, . \]
This means there are constants $C_1, C_2>0$ such that
\[\varepsilon (t_i) \ge C_1 \cdot t_i^n \log \left( C_2 \, t_i^\beta \exp (t_i^{-n})\right) \, , \quad \beta= \nicefrac{1}{p-n+1} \, . \]
Letting $i \to \infty$, the right hand hand converses to $C_1$ and $\varepsilon (t_i) \to 0$. This contradiction competes the proof.

\end{proof}
\subsection{The existence part of Theorem \ref{thm:cusp}}
\begin{thm}\label{inverse}
Let $u(t)=\frac{e}{\exp\lf(\frac{1}{t}\r)^\alpha}$ for some $0<\alpha<n$. Then there exists a homeomorphism $h \colon \BB\rightarrow\BB_u$ with finite conformal energy whose  inverse  $f=h^{-1} \colon \BB_u\to\BB$ is a Lipschitz regular.
\end{thm}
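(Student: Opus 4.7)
The plan is to construct the homeomorphism $h \colon \BB \to \BB_u$ together with its Lipschitz inverse $f = h^{-1}$ explicitly, following the strategy from the existence proof of Theorem~\ref{thm:62} and adapting it to the cusp geometry.

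First, I reduce to bi-Lipschitz equivalent model domains with annular cross-sections. Take $\X = \X_- \cup \X_+$ bi-Lipschitz equivalent to $\BB$ and $\Y = \Y_- \cup \Y_+$ bi-Lipschitz equivalent to $\BB_u$, with $\X_+$ an annular frustum of inner profile $r(t)$ and $\Y_+ = \{(t, y) \colon 0 \le t < 1,\, u(t) < |y| < 1\}$ the cuspy annular region. On $\X_-$ (and $\Y_-$), take $h$ to be the identity. On $\X_+$, define $h$ cylindrically by
\[
h(t, x) = \Bigl(t,\, \bigl[u(t) + (1 - u(t))\, \tfrac{|x| - r(t)}{1 - r(t)}\bigr]\, \tfrac{x}{|x|}\Bigr),
\]
so that the inner boundary $|x| = r(t)$ maps to the cusp surface $|y| = u(t)$ and the outer cylinder $|x| = 1$ stays fixed. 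The inverse $f$ on $\Y_+$ is obtained by inverting the affine radial relation.

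To verify that $f$ is Lipschitz, compute its partial derivatives. The radial derivative is $(1 - r(t))/(1 - u(t))$, which is bounded once the domains are truncated away from $t = 1$. The time derivative involves $u'(t) = \alpha t^{-\alpha-1} u(t) = \alpha t^{-\alpha-1} e \exp(-t^{-\alpha})$, which tends to $0$ as $t \to 0$ (the exponential decay of $u(t)$ dominates the polynomial blowup of $t^{-\alpha-1}$) and is finite at $t = 1$. Hence $|Df|$ is bounded on the relevant domain and $f$ is Lipschitz.

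Finally, verify the finiteness of $\int_\X |Dh|^n \, \dtext x$ using Theorem~\ref{thm1.2}:
\[
\int_{\X} |Dh|^n \, \dtext x = \int_{\Y} \frac{|D^\sharp f(y)|^n}{J_f(y)^{n-1}} \, \dtext y.
\]
Since $|D^\sharp f|$ is bounded, the finiteness reduces to $\int_\Y J_f^{-(n-1)} \, \dtext y < \infty$. In cylindrical coordinates, $J_f = \partial_\sigma \phi \cdot (\phi/\sigma)^{n-2}$, where $\phi$ is the radial component of $f$, and near the cusp the dominant contribution to $1/J_f^{n-1}$ is $(u(t)/r(t))^{(n-1)(n-2)}$ times lower-order factors. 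The main obstacle will be carrying out this energy estimate carefully, pulling it back to $\BB$ through the bi-Lipschitz reductions, and tracking how the combination of the exponential decay of $u(t)$ and the polynomial scaling contribute; the critical exponent $\alpha = n$ should emerge as the threshold separating convergence (for $\alpha < n$) from divergence (for $\alpha \ge n$), matching the non-existence result of the previous theorem and thereby confirming the sharpness of the construction.
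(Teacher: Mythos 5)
Your construction goes the wrong way and cannot produce a Lipschitz inverse. You stretch radially within each cross-section, mapping (at a fixed $t$) the annulus $r(t)<|x|<1$ affinely onto the cuspy annulus $u(t)<|y|<1$. For a map of the form $x \mapsto \phi(t,|x|)\,x/|x|$, the singular values of the cross-sectional differential are $\partial_\rho\phi$ in the radial direction and $\phi/\rho$ repeated $(n-2)$ times in the tangential directions. Near the inner boundary of $\X_+$ one has $\rho\approx r(t)$ and $\phi\approx u(t)$, so the tangential distortion of $h$ is $u(t)/r(t)\to 0$, and consequently the tangential distortion of the inverse $f$ is $r(t)/u(t)\to\infty$ as $t\to 0$ (indeed super-exponentially, since $u(t)=e\exp(-t^{-\alpha})$). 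You checked only the radial derivative $(1-r(t))/(1-u(t))$ and the $t$-derivative of the profile, and missed the $(n-2)$ angular eigenvalues; those are exactly the ones that blow up. There is no admissible choice of $r(t)$ to fix this: $r(t)\lesssim u(t)$ would already put an inward cusp in $\X$, contradicting bi-Lipschitz equivalence with $\BB$. A further symptom that the construction cannot work is that your $h$ is actually Lipschitz on $\X_+$ for every $\alpha>0$, so the critical threshold $\alpha=n$ never appears.

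The paper's proof uses the opposite mechanism: it stretches along the axis of the cusp while keeping the cross-sectional variable $x$ fixed, namely $h(t,x)=\bigl(u^{-1}(|x|)\,t/|x|,\;x\bigr)$ on $\X_+$ with the cone profile $r(t)=t$, so that the inverse is $f(s,y)=\bigl(|y|\,s/u^{-1}(|y|),\;y\bigr)$. Because the $y$-component of $f$ is the identity, there is no angular distortion at all; the only nontrivial factor $|y|/u^{-1}(|y|)=|y|\log^{1/\alpha}(e/|y|)$ and its $s$-derivatives stay bounded, so $f$ is Lipschitz. The cost is transferred to $h$: one gets $|Dh(t,x)|\lesssim \bigl(|x|\log^{1/\alpha}(e/|x|)\bigr)^{-1}$, and the $n$-energy $\int_0^1 \rho^{-1}\log^{-n/\alpha}(e/\rho)\,d\rho$ converges precisely when $\alpha<n$, which is where the sharp threshold enters. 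If you want to repair your argument, you should switch to this axial stretching; the radial one cannot be made to work.
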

\begin{proof}
Fix $0<\alpha <n$ and the  corresponding cusp domain $\mathbb B_u$ with $u(\tau)=\frac{e}{\exp\lf(\tau^{-1}\r)^\alpha}$. As in the proof of Theorem~\ref{thm:62} we write
\[\R^n = \R \times \R^{n-1}= \{(t,x) \colon  t\in \R \, , \; x \in \R^{n-1}\}   \]
and replace $\mathbb B$  by a  bi-Lipschitz equivalent domain, $\X=\X_-\cup \X_+$, where
\[\X_- = \{ (t,x) \colon -1<t\le 0 \textnormal{ and } \abs{x}<1  \}\]
and
\[\X_+ = \{ (t,x) \colon 0< t<1 \textnormal{ and } t < \abs{x}<1  \} \, . \]
We replace the cusp domain $\mathbb B_u$ by the following bi-Lipschitz equivalent domain $\Y = \Y_- \cup \Y_+$, where
\[\Y_- = \{ (s,y) \colon -1<s\le 0 \textnormal{ and } \abs{y}<1 \}\]
and
\[ \Y_+= \{(s,y) \colon 0 < s <1 \textnormal{ and }  u(s)<\abs{y} <1 \}  \, . \]
We define $h \colon \X \onto \Y$ by
\[
h(t,x)= \begin{cases} (t,x) & \textnormal{in } \X_- \\
 \left(\frac{u^{-1} (\abs{x})}{\abs{x}}t, \, x\right)   \quad & \textnormal{in } \X_+ \, . \end{cases}
\]
Note that the inverse function $u^{-1} (\eta) = \log^{-\frac{1}{\alpha}} \left( \frac{e}{\eta}\right)$.
Then the inverse mapping $f=h^{-1} \colon \Y \onto \X$ takes the form
\[ 
f(s,y) =  \begin{cases} (s,y) & \textnormal{in } \Y_- \\
\left(\frac{\abs{y}}{u^{-1} (\abs{y})}s, \, y \right) \quad & \textnormal{in } \Y_+ \, .  \end{cases}
\]
Now, $f$ is a Lipschitz regular mapping. Furthermore, we have
\[\abs{Dh(t,x)} \le \frac{C}{\abs{x} \log^\frac{1}{\alpha} \left(\frac{e}{\abs{x}} \right)} \, . \]
Therefore,
\[\int_{\mathbb \X} \abs{Dh}^n < \infty   \]
as claimed.
\end{proof}

\end{document}